\newcommand{\email}[1]{{\tt #1}}
\def\tto{\;{\lower 1pt \hbox{$\rightarrow$}}\kern -10pt
\hbox{\raise 2pt \hbox{$\rightarrow$}}\;}
\newcommand{\R}{\mathbb{R}}
\newcommand{\N}{\mathbb{N}}
\newcommand{\norm}[1]{\|#1\|}
\newcommand{\B}{{\cal B}}
\newcommand{\setto}[1]{\mathop{\rightarrow}\limits^#1}
\newcommand{\longsetto}[1]{\mathop{\longrightarrow}\limits^#1}
\newcommand{\skalp}[1]{\langle #1\rangle}
\newcommand{\xb}{\bar x}
\newcommand{\yb}{\bar y}
\newcommand{\zb}{\bar z}
\newcommand{\pb}{\bar p}
\def\O{\Omega}
\def\dn{\downarrow}
\newcommand{\oo}{o}
\def\ox{\bar{x}}
\def\op{\bar{p}}
\def\oy{\bar{y}}
\def\oz{\bar{z}}
\def\gph{\mbox{\rm gph}\,}
\def\kk{\kappa}
\def\disp{\displaystyle}
\newcommand{\I}{{\cal I}}
\newcommand{\Gr}{{\rm gph\,}}
\newcommand{\ph}{\varphi}
\newcommand{\subreg}{{\rm subreg\,}}
\newcommand{\subregx}{{\rm rob\,}}
\newcommand{\bmpx}{{\rm bmp}_x\,}
\newcommand{\ochi}{{\overline\chi}}
\newcommand{\uchi}{{\underline\chi}}
\newcommand{\gb}{g(\pb,\xb)}
\newcommand{\gxb}{\nabla_x g(\pb,\xb)}
\newcommand{\gpb}{\nabla_p g(\pb,\xb)}
\newcommand{\ImDp}{{\rm Im}_\zeta D_pg(\pb,\xb)}
\def\R{\mathbb R}
\def\N{\mathbb N}
\def\ph{\varphi}
\def\emp{\emptyset}
\def\lm{\lambda}
\def\kk{\kappa}
\def\la{\langle}
\def\ra{\rangle}
\def\hat{\widehat}
\def\Tilde{\widetilde}
\def\tilde{\widetilde}
\def\Bar{\overline}
\newlength{\myparboxwidth}\setlength{\myparboxwidth}{\textwidth}\addtolength{\myparboxwidth}{-2cm}
\title{Robinson Stability of Parametric Constraint Systems\\via Variational Analysis}
\author{HELMUT GFRERER\footnote{Institute of Computational Mathematics, Johannes Kepler University Linz, A-4040 Linz, Austria; \email{helmut.gfrerer@jku.at}}
$\;$ \and $\;$ BORIS S. MORDUKHOVICH\footnote{Department of Mathematics, Wayne State University, Detroit, MI 48202, USA, and RUDN University, Moscow 117198, Russia; \email{boris@math.wayne.edu}}}
\begin{document}
\newtheorem{Theorem}{Theorem}[section]
\newtheorem{Proposition}[Theorem]{Proposition}
\newtheorem{Remark}[Theorem]{Remark}
\newtheorem{Lemma}[Theorem]{Lemma}
\newtheorem{Corollary}[Theorem]{Corollary}
\newtheorem{Definition}[Theorem]{Definition}
\newtheorem{Example}[Theorem]{Example}
\newtheorem{Assumption}[Theorem]{Assumption}
\renewcommand{\theequation}{{\thesection}.\arabic{equation}}
\maketitle

{\bf Abstract.} This paper investigates a well-posedness property of parametric constraint systems named here {\em Robinson stability}. Based on advanced tools of variational analysis and generalized differentiation, we derive first-order and second-order conditions for this property under minimal constraint qualifications and establish relationships of Robinson stability with other well-posedness properties in variational analysis and optimization. The results obtained are applied to robust Lipschitzian stability of parametric variational systems.

{\bf Key words.} parametric constraint systems, Robinson stability, variational analysis, first-order and second-order generalized differentiation, metric regularity and subregularity

{\bf AMS subject classification.} 49J53, 90C30, 90C31
\section{Introduction and Discussion}

The main focus of this paper is on studying {\em parametric constraint systems} (PCS) of the type
\begin{equation}\label{EqConstrSystem}
g(p,x)\in C\;\mbox{ with }\;x\in\R^n\;\mbox{ and }\;p\in P,
\end{equation}
where $x$ is the {\em decision} variable, and where $p$ is the {\em perturbation parameter} belonging to a topological space $P$. In what follows we impose standard smoothness assumptions on $g\colon P\times\R^n\to\R^l$ with respect to the decision variable and consider general constraint sets $C\subset\R^l$, which are closed while not necessarily convex. Define the (set-valued) {\em solution map} $\Gamma\colon P\tto\R^n$ to \eqref{EqConstrSystem} by
\begin{eqnarray}\label{Gamma}
\Gamma(p):=\big\{x\in\R^n\big|\;g(p,x)\in C\big\}\;\mbox{ for all }\;p\in P
\end{eqnarray}
and fix the reference feasible pair $(\op,\ox)\in\gph\Gamma$. The major attention below is paid to the following well-posedness property of PCS, which postulates the desired local behavior of the solution map \eqref{Gamma}.

\begin{Definition}[Robinson stability]\label{DefUMSP} We say that PCS \eqref{EqConstrSystem} enjoys the {\sc Robinson stability (RS)} property at $(\op,\ox)$ with modulus $\kk\ge0$ if there are neighborhoods $U$ of $\xb$ and $V$ of $\pb$ such that
\begin{equation}\label{EqUMSP}
{\rm dist}\big(x;\Gamma(p)\big)\le\kappa\,{\rm dist}\big(g(p,x);C\big)\;\mbox{ for all }\;(p,x)\in V\times U
\end{equation}
in terms of the usual point-to-set distance. The infimum over all such moduli $\kappa$ is called the {\sc RS exact bound} of \eqref{EqConstrSystem} at $(\op,\ox)$ and is denoted by $\subregx(g,C)(\pb,\xb)$.
\end{Definition}

Robinson \cite{Rob76} studied this property for (closed) {\em convex cones} $C$ under the name of ``stability" and proved that the following condition (known now as the {\em Robinson constraint qualification}):
\begin{eqnarray}\label{rcq}
0\in{\rm int}\big(g(\op,\ox)+\nabla_x g(\op,\ox)\R^n-C\big)
\end{eqnarray}
is {\em sufficient} for RS in this case. Also, \eqref{rcq} is shown to be {\em necessary} for \eqref{EqUMSP} if $g(p,x)=g(x)-p$ (the case of {\em canonical} perturbations) and $P=\R^l$ (or $P$ is a neighborhood of $0\in\R^l$). Further results in this direction have been obtained in various publications (see, e.g., \cite{BonSh00,Bor86,ChiYaoYen10,DoRo14,JeyYen04} and the references therein), and in some of them condition \eqref{EqUMSP} is called ``Robinson metric regularity" of \eqref{Gamma}. In our opinion, the latter name is misleading since it contradicts the widely accepted notion of {\em metric regularity} in variational analysis \cite{Mor06,RoWe98} meaning, for a given set-valued mapping $F\colon Z\tto Y$ between metric spaces and a given point $(\oz,\oy)\in\gph F$, that the following distance estimate
\begin{eqnarray}\label{metreg}
{\rm dist}\big(z;F^{-1}(y)\big)\le\kk\,{\rm dist}\big(y;F(z)\big)\;\mbox{ for all }\;(z,y)\;\mbox{ close to }\;(\oz,\oy)
\end{eqnarray}
holds. Having in mind the weaker property of {\em metric subregularity} of $F$ at $(\oz,\oy)$, which corresponds to the validity of \eqref{metreg} with the fixed point $y=\oy$ therein, we can interpret the RS property \eqref{EqUMSP} as the metric subregularity of the other mapping $x\mapsto g(p,x)-C$ at $(x,0)$ for every point $x\in\Gamma(p)$ close to $\ox$ for every fixed parameter $p\in P$ close to $\op$ with the {\em uniform} modulus $\kk$.

Another useful interpretation of \eqref{EqUMSP} is as follows. Robinson defined in \cite{Rob76} the class of {\em admissible perturbations} of the system  $g(x)\in C$ at $\ox$ as triples $(P,\op,g(p,x))$ such that $\op\in P$ and $g\colon P\times\R^n\to\R^l$ is partially differentiable with respect to $x$ for all $p\in P$, is continuous together with $\nabla_x g$ at $(\op,\ox)$, and satisfies $g(\op,x)=g(x)$ near $\ox$. It can be distilled from \cite{Rob76} that, in the case of convex cones $C$, the metric regularity of the mapping $x\to g(x)-C$ around $(\ox,0)$ is {\em equivalent} to the validity of \eqref{EqUMSP} for {\em all} the admissible perturbations with some uniform modulus $\kk$. However, the situation changes dramatically when we face realistic models with {\em constraints} on feasible perturbations. In such settings, which particularly include canonical perturbations with convex cones $C$ while $\op=0\in{\rm bd}\,P\subset\R^l$, the {\em uniform subregularity} viewpoint on Robinson stability is definitely useful. This approach naturally relates to a challenging issue of variational analysis on determining classes of perturbations under which the (generally {\em nonrobust}) property of metric {\em sub}regularity is {\em stable}. Such developments are important for various applications; see, e.g., \cite{knt10}.

The major goal of this paper is to obtain verifiable conditions on perturbation triples $(P,\op,g(p,x))$ ensuring the validity of the RS estimate \eqref{EqUMSP}. The results obtained in this vein seem to be new not only for the case of general perturbations with nonconvex sets $C$, but even in the conventional settings where perturbations are canonical and $C$ is a polyhedral convex cone. To achieve these results, we use powerful tools of first-order and second-order variational analysis and generalized differentiation, which are briefly reviewed in Section~2. The rest of the paper is organized as follows.

Section~3 presents {\em first-order} results on the validity of Robinson stability and its relationships with some first-order constraint qualifications and Lagrange multipliers. In particular, a precise formula for calculating the {\em exact stability bound} $\subregx(g,C)(\op,\ox)$ is derived under a new {\em subamenability} property of $C$. The main first-order conditions ensuring RS go {\em far beyond metric regularity} of $x\mapsto g(\op,x)-C$ while surely hold under its validity regardless of the convexity of the set $C$. We further specify the obtained results in the settings where $C$ is either convex or the union of finitely many convex polyhedra and also under more conventional constraint qualifications.

Section~4 is devoted to {\em second-order} analysis of Robinson stability, which seems has never been previously done in the literature in the framework of Definition~\ref{DefUMSP}. However, such an analysis of some other stability and regularity properties in the convex constraint framework of Banach spaces under the failure of Robinson's constraint qualification \eqref{rcq} has been efficiently conducted by Arutyunov and his collaborators; see, e.g., \cite{a10,aai07} and the references therein. We introduce here new second-order quantities for closed sets and employ them to derive constructive second-order conditions to ensure Robinson stability of \eqref{EqConstrSystem} in the case of general sets $C$ with effective specifications for unions of convex polyhedra. As a by-product of the obtained results on Robinson stability, new second-order conditions for metric subregularity of constraint mappings are also derived in nonpolyhedral settings.

Section~5 provides applications of the main results on Robinson stability to establish new first-order and second-order conditions for {\em robust Lipschitzian stability} (Lipschitz-like or Aubin property) of solution maps in \eqref{Gamma} with their specifications for {\em parametric variational systems} (PVS). The latter systems reduce to PCS \eqref{EqConstrSystem} with sets $C$ represented as graphs of normal cone/subdifferential mappings (in particular, parameter-dependent ones), which occur to be the most challenging for sensitivity analysis. The given numerical example shows that our results can be efficiently applied to such cases.

In the concluding Section~6 we briefly summarize the obtained results for Robinson stability of PCS, present more discussions on its relationships with other well-posedness properties of PCS and PVS, and outline some topics for our future research.

Throughout the paper we use standard notation of variational analysis and generalized differentiation (see, e.g., \cite{Mor06,RoWe98}), except special symbols discussed in the text. 

\section{Preliminaries from Variational Analysis}

All the sets under consideration are supposed to be {\em locally closed} around the points in question without further mentioning. Given $\Omega\subset\R^d$ and $\oz\in\O$, recall first the standard constructions of variational analysis used in what follows (see \cite{Mor06,RoWe98}):

The (Bouligand-Severi) {\em contingent cone} to $\O$ at $\oz$ is:
\begin{eqnarray}\label{tan}
T_\O(\oz):=\big\{u\in\R^d\big|\;\exists\,t_k\dn 0,\;u_k\to u\;\mbox{ with }\;\oz+t_k u_k\in\O\;\mbox{ for all }\;k\in\N\big\}.
\end{eqnarray}

The (Fr\'echet) {\em regular normal cone} to $\O$ at $\oz$ is:
\begin{eqnarray}\label{rnc}
\Hat N_\O(\oz):=T_\O(\oz)^*=\big\{v\in\R^d\big|\;\la v,u\ra\le 0\;\mbox{ for all }\;u\in T_\O(\oz)\big\}.
\end{eqnarray}

The (Mordukhovich) {\em limiting normal cone} to $\O$ at $\oz$ is:
\begin{eqnarray}\label{lnc}
N_\O(\oz):=\big\{v\in\R^d\big|\;\exists\,z_k\to\ox,\;v_k\to v\;\mbox{ with }\;v_k\in\Hat N_\O(z_k),\;z_k\in\O\;\mbox{ for all }\;k\in\N\big\}.
\end{eqnarray}

We will also employ the directional modification of \eqref{lnc} introduced recently by Gfrerer \cite{Gfr13b}. Given $w\in\R^d$, the {\em limiting normal cone in direction} $w$ to $\O$ at $\oz$ is
\begin{eqnarray}\label{dnc}
N_\O(\oz;w):=\big\{v\in\R^d\big|\;\exists\,t_k\dn 0,\;w_k\to w,\;v_k\to v\;\mbox{ with }\;v_k\in\Hat N_\O(\oz+t_k w_k),\;\oz+t_k w_k\in\O\big\}.\qquad
\end{eqnarray}

The following calculus rule is largely used in the paper. It is an extension of the well-known result of variational analysis (see, e.g., \cite[Theorem~3.8]{Mor06}) with replacing the metric regularity qualification condition by that of the imposed metric subregularity. Note that a similar result in somewhat different framework can be distilled from the proof of \cite[Theorem~4.1]{HenJouOut02}; cf.\ also \cite[Rule ($S_2$)]{io08}.

\begin{Lemma}[limiting normals to inverse images]\label{CorBMP} Let $f:\R^s\to\R^d$ be strictly differentiable at $\oz\in f^{-1}(C)$ and such that the mapping $z\mapsto f(z)-C$ is metrically subregular at $(\oz,0)$ with modulus $\kappa\ge 0$. Then for every $v\in N_{f^{-1}(C)}(\oz)$ there exists some $u\in N_C(f(\oz))\cap\kappa\|v\|\B_{\R^d}$ satisfying $v=\nabla f(\oz)^*u$, where the sign $^*$ indicates the matrix transposition.
\end{Lemma}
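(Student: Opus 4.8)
The plan is to reduce the limiting-normal-cone statement to a regular-normal-cone estimate obtained along an approximating sequence, using the metric subregularity hypothesis to control the "size" of the multiplier $u$. First I would fix $v\in N_{f^{-1}(C)}(\oz)$ and, by the definition \eqref{lnc}, pick sequences $z_k\to\oz$ with $z_k\in f^{-1}(C)$ and $v_k\to v$ with $v_k\in\Hat N_{f^{-1}(C)}(z_k)$. The core of the argument is then a \emph{pointwise} claim: if $z\in f^{-1}(C)$ is close enough to $\oz$ (so that metric subregularity of $z\mapsto f(z)-C$ at $(\oz,0)$ with modulus $\kappa$ is available on a neighborhood) and $w\in\Hat N_{f^{-1}(C)}(z)$, then there is $\widetilde u\in\Hat N_C(f(z))$ with $w=\nabla f(z)^*\widetilde u$ and $\|\widetilde u\|\le(\kappa+\varepsilon)\|w\|$ for any prescribed $\varepsilon>0$ — i.e. a quantitative version of the regular pre-image rule with the modulus coming from subregularity.

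To prove that pointwise claim I would argue directly from the variational description of $\Hat N$. Take $w\in\Hat N_{f^{-1}(C)}(z)$; then $\la w,\cdot-z\ra$ attains a local maximum over $f^{-1}(C)$ at $z$ up to first order, i.e. $\limsup_{z'\to z,\,z'\in f^{-1}(C)}\la w,z'-z\ra/\|z'-z\|\le 0$. Using metric subregularity, for $z'$ near $z$ there is a point in $f^{-1}(C)$ within distance $\kappa\,\dist(f(z');C)$ of $z'$, which lets me transfer the estimate from $f^{-1}(C)$ to the set $f(z)+\nabla f(z)\R^s$ intersected appropriately, and combined with strict differentiability of $f$ at $\oz$ (hence at nearby points, with a uniform linearization error) this yields that $\la w,h\ra\le 0$ whenever $\nabla f(z)h\in T_C(f(z))$, plus a norm bound. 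In other words $w$ annihilates the cone $\{h\mid\nabla f(z)h\in T_C(f(z))\}$; by a standard duality/Farkas-type argument (finite dimensions, the cone $T_C(f(z))$ being closed) this forces $w=\nabla f(z)^*\widetilde u$ for some $\widetilde u\in (T_C(f(z)))^*=\Hat N_C(f(z))$, and tracking constants through the subregularity and linearization estimates gives $\|\widetilde u\|\le(\kappa+\varepsilon)\|w\|$. Alternatively, and perhaps more cleanly, I would invoke the cited result behind \cite[Theorem~4.1]{HenJouOut02} or \cite[Rule~($S_2$)]{io08} to get exactly this quantitative regular pre-image rule and skip re-deriving it.

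With the pointwise claim in hand, apply it at each $z_k$ with a fixed small $\varepsilon>0$ to obtain $u_k\in\Hat N_C(f(z_k))$ with $v_k=\nabla f(z_k)^*u_k$ and $\|u_k\|\le(\kappa+\varepsilon)\|v_k\|$. Since $v_k\to v$, the sequence $\{u_k\}$ is bounded, so along a subsequence $u_k\to u$ for some $u\in\R^d$. Passing to the limit: $f(z_k)\to f(\oz)$ with $f(z_k)\in C$, and $u_k\in\Hat N_C(f(z_k))$, so $u\in N_C(f(\oz))$ by definition \eqref{lnc}; strict (hence continuous) differentiability gives $\nabla f(z_k)^*\to\nabla f(\oz)^*$, so $v=\nabla f(\oz)^*u$; and $\|u\|\le(\kappa+\varepsilon)\|v\|$. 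Finally, letting $\varepsilon\dn 0$ — formally, redo the construction with $\varepsilon=\varepsilon_j\dn 0$ and diagonalize, or simply note the bound $\|u\|\le(\kappa+\varepsilon)\|v\|$ holds for every $\varepsilon>0$ along suitable subsequences and a compactness argument delivers a limiting $u$ with $\|u\|\le\kappa\|v\|$ — yields $u\in N_C(f(\oz))\cap\kappa\|v\|\B_{\R^d}$, as required.

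The main obstacle is the quantitative pointwise step: turning the purely qualitative regular pre-image rule into one where the multiplier norm is controlled by $\kappa\|w\|$, with $\kappa$ exactly the subregularity modulus. This is where metric subregularity (rather than the stronger metric regularity of \cite[Theorem~3.8]{Mor06}) must be used carefully — subregularity only compares to the \emph{fixed} reference value $0$, so one has to make sure the distance estimate $\dist(z';f^{-1}(C))\le\kappa\,\dist(f(z');C)$ is applied at points $z'$ feeding into the definition of $\Hat N_{f^{-1}(C)}(z)$ and that the linearization error of $f$ does not swamp the $\kappa$-bound; choosing $z'$ along directions realizing the contingent cone $T_C(f(z))$ and exploiting $\dist(f(z'); C)=o(\|z'-z\|)$ for such directions is the technical heart. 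Everything after that — boundedness, subsequential limits, closedness of $N_C$ under the limiting construction — is routine.
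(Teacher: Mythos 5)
Your overall strategy---approximate $v$ by regular normals $v_k\in\Hat N_{f^{-1}(C)}(z_k)$ at nearby points $z_k\in f^{-1}(C)$, prove a quantitative pointwise pre-image rule there, and pass to the limit---is workable and genuinely different from the paper's proof, which is essentially two lines: the authors apply \cite[Proposition~4.1]{GfrOut15} to $F(z):=f(z)-C$ with direction $w=0$ to produce $u\in\kappa\|v\|\B_{\R^d}$ with $(v,-u)\in N_{\Gr F}(\oz,0)$, and then compute $N_{\Gr F}(\oz,0)$ explicitly to read off $u\in N_C(f(\oz))$ and $v=\nabla f(\oz)^*u$. Your route avoids that citation at the cost of re-proving the pointwise rule, and the concluding compactness/limit passage is in principle fine.

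The genuine problem sits in the pointwise step itself. Your direct derivation shows that $w$ lies in the polar of $K:=\{h\mid\nabla f(z)h\in T_C(f(z))\}$ and then asserts, by a Farkas-type argument, that $K^*=\nabla f(z)^*\bigl(T_C(f(z))\bigr)^*=\nabla f(z)^*\Hat N_C(f(z))$. For nonconvex $C$ this polarity identity is false, and the resulting claim $\Hat N_{f^{-1}(C)}(z)\subset\nabla f(z)^*\Hat N_C(f(z))$ fails even under metric subregularity: take $f(z)=(z,0)$ from $\R$ to $\R^2$ and $C=\{(a,b)\mid |b|\ge|a|\}$, a union of two convex polyhedral cones. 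Then $f^{-1}(C)=\{0\}$, the map $z\mapsto f(z)-C$ is metrically subregular at $(0,0)$ with $\kappa=\sqrt 2$, and $\Hat N_{f^{-1}(C)}(0)=\R$; yet $\co C=\R^2$, so $\Hat N_C(0,0)=\{0\}$ and $\nabla f(0)^*\Hat N_C(0,0)=\{0\}$. The lemma still holds there only because the \emph{limiting} cone $N_C(0,0)$ contains $(1,-1)$. So the multiplier must be sought in $N_C(f(z))$, not in $\Hat N_C(f(z))$, and the correct pointwise statement ($w\in\Hat N_{f^{-1}(C)}(z)$ implies $w=\nabla f(z)^*\tilde u$ with $\tilde u\in N_C(f(z))$, $\|\tilde u\|\le\kappa\|w\|$) is obtained not by polarity but by exact penalization: $z$ locally minimizes $-\la w,z'\ra+\varepsilon\|z'-z\|+(\|w\|+\varepsilon)\,\kappa\,{\rm dist}(f(z');C)$, and the limiting Fermat, sum and chain rules give the claim since $\partial\,{\rm dist}(\cdot;C)(f(z))\subset N_C(f(z))\cap\B_{\R^d}$. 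This is precisely what the cited results of Henrion--Jourani--Outrata and Ioffe--Outrata deliver, so your fallback of invoking them is the right move; but then $u_k\in N_C(f(z_k))$ rather than $\Hat N_C(f(z_k))$, and the final limit passage must appeal to the outer semicontinuity of the limiting normal cone, $\limsup_{y\to f(\oz)}N_C(y)=N_C(f(\oz))$, rather than to the definition \eqref{lnc}. That adjustment is standard in finite dimensions, after which the rest of your argument goes through.
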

\begin{proof} Denote $F(z):=f(z)-C$ and pick $v\in N_{f^{-1}(C)}(\oz)=N_{F^{-1}(0)}(\oz)$. Since the mapping $F:\R^s\tto\R^d$ is metrically subregular at $(\oz,0)$ with modulus $\kappa$ and closed-graph around this point, we can apply \cite[Proposition~4.1]{GfrOut15} and get the inclusion
\begin{eqnarray}\label{inv-im}
N_{F^{-1}(\yb)}(\oz;w)\subset\big\{v\in\R^s\big|\;\exists\,y\in\kappa\|v\|\B_{\R^d}\;\mbox{ with }\;(v,y)\in N_{\Gr F}\big((\oz,0);(w,0)\big)\big\}.
\end{eqnarray}
Using \eqref{inv-im} with $w=0$ yields the existence of $u\in\kappa\|v\|\B_{\R^l}$ such that $(v,-u)\in N_{\Gr F}(\oz,0)$. The structure of the mapping $F$ and elementary differentiation ensure the normal cone representation
$$
N_{\Gr F}(\oz,0)=\big\{(\xi,\eta)\in\R^s\times\R^d\big|\;-\eta\in N_C\big(f(\oz)\big),\;\xi+\nabla f(\oz)^*\eta=0\big\},
$$
which therefore verifies the claimed statement of the lemma.\end{proof}

Throughout the paper we systematically distinguish between metric regularity and subregularity assumptions. To illuminate the difference between these properties in the case of the underlying mapping $F(\cdot)=f(\cdot)-C$, observe that $F$ is metrically regular around $(\oz,0)$ {\em if and only if} the implication
\begin{eqnarray}\label{mr-f}
\big[\lm\in N_C\big(f(\oz)\big),\;\nabla f(\oz)^*\lm=0\big]\Longrightarrow\lm=0
\end{eqnarray}
holds. This is a direct consequence of the Mordukhovich criterion; see \cite[Theorem~9.40]{RoWe98}. On the other hand, it is shown in \cite[Corollary~1]{GfrKl15} based on the results developed in \cite{Gfr11} that the metric subregularity of $F$ at $(\oz,0)$ is guaranteed by the condition that for all $u\ne0$ with $\nabla f(\oz)u\in T_C(f(\ox))$ we have the implication
\begin{eqnarray}\label{msr-f}
\big[\lm\in N_C\big(f(\oz);\nabla f(\oz)u\big),\;\nabla f(\oz)^*\lm=0\big]\Longrightarrow\lm=0.
\end{eqnarray}

Next we introduce a new class of ``nice" sets the properties of which extend the corresponding ones for {\em amenable} sets; see \cite{RoWe98}. The difference is again in employing metric subregularity instead of metric regularity. Indeed, the qualification condition used in the definition of amenability \cite[Definition~10.23]{RoWe98} ensures the metric regularity of the mapping $z\mapsto q(z)-Q$ below around $(\oz,0)$.

\begin{Definition}[subamenable sets]\label{subamen} A set $C\subset\R^l$ is called {\sc subamenable} at $\oz\in C$ if there exists a neighborhood $W$ of $\bar z$ along with a ${\cal C}^1$-smooth mapping $q\colon W\to\R^d$ for some $d\in\N$ and along with a closed convex set $Q\subset\R^d$ such that we have the representation
\begin{eqnarray*}
C\cap W=\big\{z\in W\big|\;q(z)\in Q\big\}
\end{eqnarray*}
and the mapping $z\mapsto q(z)-Q$ is metrically subregular at $(\bar z,0)$. We say that $C$ is {\sc strongly subamenable} at $(\oz,0)$ if this can be arranged with $q$ of class ${\cal C}^2$, and it is {\sc fully subamenable} at $(\oz,0)$ if in addition the set $Q$ can be chosen as a convex polyhedron.
\end{Definition}

Finally in this section, we formulate our {\em standing assumptions} on the mapping $g\colon P\times\R^n\to\R^l$ in \eqref{EqConstrSystem}, which stay without further mentioning for the rest of the paper: There are neighborhoods $U$ of $\xb$ and $V$ of $\pb$ such that for each $p\in V$ the mapping $g(p,\cdot)$ is continuously differentiable on $U$ and that both $g$ and and its partial derivative $\nabla_x g$ are continuous at $(\pb,\xb)$.

\section{First-Order Analysis of Robinson Stability}

We start this section with establishing relationships between Robinson stability and other important properties and constraint qualifications for parametric systems \eqref{EqConstrSystem}. Recall first from \cite{GM15} that the partial {\em metric subregularity constraint qualification} (MSCQ) holds for \eqref{EqConstrSystem} at $(\op,\ox)$ with respect to $x$ if there are neighborhoods $U$ of $\xb$ and $V$ of $\pb$ such that for every $p\in V$ and every $x\in\Gamma(p)\cap U$ the mapping $g(p,\cdot)-C$ is metrically subregular at $(x,0)$, i.e., there is a neighborhood $U_{p,x}$ of $x$ and a constant $\kappa_{p,x}\ge 0$, possibly depending on $p$ and $x$, for which
\begin{eqnarray}\label{mscq}
{\rm dist}\big(\xi;\Gamma(p)\big)\le\kappa_{p,x}{\rm dist}\big(g(p,\xi);C\big)\;\mbox{ for all }\;\xi\in U_{p,x}.
\end{eqnarray}

Next we introduce a new property of PCS \eqref{EqConstrSystem} at the reference point $(\op,\ox)$ that involves limiting  normals and Lagrange multipliers. Given $(p,x)\in\Gr\Gamma$ and $v\in\R^n$, define the set of multipliers
\begin{eqnarray*}
\Lambda(p,x,v):=\big\{\lambda\in N_C\big(g(p,x)\big)\big|\;\nabla_x g(p,x)^*\lambda=v\big\}.
\end{eqnarray*}
It follows from construction of $\Gamma$ in \eqref{Gamma} and Lemma~\ref{CorBMP} that the metric subregularity of the mapping $g(p,\cdot)-C$ at $(x,0)$ ensures the normal cone representation
$N_{\Gamma(p)}(x)\subset\nabla_x g(p,x)^*N_C(g(p,x))$, and therefore the inclusion $v\in N_{\Gamma(p)}(x)\Longrightarrow\Lambda(p,x,v)\ne\emp$ holds.

\begin{Definition}[partial bounded multiplier property]\label{DefBMP}  We say that the {\sc partial bounded multiplier property} $($BMP$)$ with respect to $x$ is satisfied for system  \eqref{EqConstrSystem} at the point $(\pb,\xb)\in\gph\Gamma$ with modulus $\kappa\ge0$ if there are neighborhoods $V$ of $\pb$ and $U$ of $\xb$ such that
\begin{eqnarray}\label{EqPBMP}
\Lambda(p,x,v)\cap\kappa\|v\|\B_{\R^n}\ne\emp\;\mbox{ for all }\;p\in V,\;x\in\Gamma(p)\cap U,\;\mbox{ and }\;v\in N_{\Gamma(p)}(x).
\end{eqnarray}
The infimum over all such moduli $\kappa$ is denoted by $\bmpx(g,C)(\pb,\xb)$.
\end{Definition}

To emphasize what is behind Robinson stability, consider an important particular case of constraint systems in {\em nonlinear programming} (NLP) described by smooth equalities and inequalities
\begin{eqnarray*}
g_i(p,x)=0\;\mbox{ for }\;i=1,\ldots,l_E\;\mbox{ and }\;g_i(p,x)\le 0\;\mbox{ for }\;i=l_E+1,\ldots,\;l_E+l_I=l.
\end{eqnarray*}
Such systems can be represented in the form of \eqref{EqConstrSystem} as follows:
\begin{eqnarray}\label{EqEqualInequal}
g(p,x)\in C:=\{0\}^{l_E}\times\R_-^{l_I}.
\end{eqnarray}

It has been well recognized in nonlinear programming that, given $(\op,\ox)\in\gph\Gamma$, the metric regularity of the mapping $g(\op,\cdot)-C$ around $(\ox,0)$ in the setting of \eqref{EqEqualInequal} is equivalent to the (partial) {\em Mangasarian-Fromovitz constraint qualification} (MFCQ) with respect to $x$ at $(\op,\ox)$, which in turn ensures the uniform boundedness of Lagrange multipliers around this point. Then the robustness of metric regularity allows us to conclude that the partial MFCQ implies the validity of both partial MSCQ and BMP for \eqref{EqEqualInequal} at $(\op,\ox)$ with some modulus $\kappa>0$.

Let us now recall another classical constraint qualification ensuring both partial MSCQ and BMP for \eqref{EqEqualInequal}. Denote $E:=\{1,\ldots, l_E\}$, $I:=\{l_I+1,\ldots,l\}$ and for any $(p,x)$ feasible to \eqref{EqEqualInequal} consider the index set $\I(p,x):=\{i\in I|\;g_i(p,x)=0\}$ of active inequalities and then put $I^+(\lambda):=\{i\in I|\; \lambda_i>0\}$ where $\lambda=(\lambda_1,\ldots,\lm_l)$. It is said that the partial {\em constant rank constraint qualification} (CRCQ) with respect to $x$ holds at $(\pb,\xb)\in\gph\Gamma$ if there are neighborhoods $V$ of $\pb$ and $U$ of $\xb$ such that for every subset $J\subset E\cup\I(\pb,\xb)$ the family of partial gradients $\{\nabla_xg_i(p,x)|\;i\in J\}$ has the same rank on $V\times U$.

\begin{Proposition}[MSCQ and BMP follow from CRCQ]\label{PropCRCQ_BMP} Given $(\pb,\xb)\in\Gr\Gamma$ for \eqref{EqEqualInequal}, the partial CRCQ at $(\op,\ox)$ implies that both partial MSCQ and BMP with respect to $x$ hold at this point.
\end{Proposition}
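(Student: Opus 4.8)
The plan is to reduce the verification of partial MSCQ and partial BMP under CRCQ to the known non-parametric case by viewing the parameter $p$ as frozen and exploiting the uniformity built into the CRCQ hypothesis. First I would recall the classical fact (going back to Janin and developed in the variational-analysis literature) that for a non-parametric system of $\mathcal{C}^1$ equalities and inequalities, CRCQ at a feasible point implies metric subregularity of the constraint mapping at that point and, moreover, yields a representation of the limiting normal cone to the feasible set as the cone generated by the active gradients together with a bound on the multipliers in terms of the rank structure. Applying this for each fixed $p\in V$ and each $x\in\Gamma(p)\cap U$ gives the pointwise metric subregularity of $g(p,\cdot)-C$ at $(x,0)$, which is exactly partial MSCQ as in \eqref{mscq}; the only subtlety is that CRCQ \emph{at the reference point} $(\pb,\xb)$ must be shown to propagate to CRCQ of the frozen system $g(p,\cdot)$ at each nearby feasible $x$, and this follows because $\I(p,x)\subset\I(\pb,\xb)$ for $(p,x)$ close to $(\pb,\xb)$ (active inequalities can only drop by continuity of $g$) together with the constant-rank condition on all subsets $J\subset E\cup\I(\pb,\xb)$ over the whole neighborhood $V\times U$.

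For the partial BMP I would argue as follows. Fix $p\in V$, $x\in\Gamma(p)\cap U$ and $v\in N_{\Gamma(p)}(x)$. By the MSCQ just established and Lemma~\ref{CorBMP} (applied to $f=g(p,\cdot)$, $C$), there exists at least one multiplier $\lambda\in\Lambda(p,x,v)$, so the active-gradient system $\nabla_x g(p,x)^*\lambda=v$, $\lambda\in N_C(g(p,x))$ is solvable. The point of CRCQ is that one can then select such a $\lambda$ with norm controlled \emph{linearly} by $\|v\|$, with a constant independent of $(p,x)$: by Carath\'eodory's theorem applied to the conical hull $\{\nabla_x g(p,x)^*\lambda\}$, one may choose a solution $\lambda$ supported on an index set $J$ for which $\{\nabla_x g_i(p,x)\mid i\in J\}$ is linearly independent; then $\lambda_J$ solves a full-rank linear system and is bounded by $\|v\|$ times the reciprocal of the smallest singular value of the corresponding submatrix. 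The CRCQ hypothesis guarantees that, shrinking $V\times U$ if necessary, these singular values stay bounded away from zero uniformly over $V\times U$ (the relevant submatrices vary continuously and the rank does not drop), which yields a single $\kappa$ valid in \eqref{EqPBMP}.

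The main obstacle I anticipate is making the uniformity in the multiplier bound rigorous: one must handle the fact that which maximal linearly independent subfamily $J$ to use may vary with $(p,x)$, and that a priori the active set $\I(p,x)$ is only upper semicontinuous. The clean way around this is to work with the finite collection of \emph{all} subsets $J\subset E\cup\I(\pb,\xb)$, note by CRCQ that each has constant rank $r_J$ on $V\times U$, and for each such $J$ observe that the map $(p,x)\mapsto$ (the $r_J$-th largest singular value of $[\nabla_x g_i(p,x)]_{i\in J}$) is continuous and strictly positive on $V\times U$, hence bounded below on a possibly smaller neighborhood; taking the minimum of finitely many such lower bounds gives the uniform constant. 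One should also check that the active-set inclusion $\I(p,x)\subseteq\I(\pb,\xb)$ lets us always pick $J\subseteq E\cup\I(\pb,\xb)$, so that these finitely many bounds suffice. With that in hand, combining the existence of a multiplier (from MSCQ plus Lemma~\ref{CorBMP}) with a Carath\'eodory-type reduction to a full-rank subsystem and the uniform singular-value bound completes the proof.
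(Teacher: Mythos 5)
Your argument is correct, and for the BMP part it takes a genuinely different route from the paper. For MSCQ both you and the paper ultimately rest on Janin's result, with your added (and welcome) remark that $\I(p,x)\subset\I(\pb,\xb)$ near the reference point, so that the constant-rank hypothesis on subsets of $E\cup\I(\pb,\xb)$ is the relevant one. For BMP the paper argues by contradiction: it fixes a basis $E'$ of the span of the equality gradients, takes an \emph{extreme point} $\lambda^k$ of the multiplier polyhedron (whose support is then linearly independent), normalizes the putatively unbounded sequence $\lambda^k/\|\lambda^k\|$, passes to a limit $\lambda$ with $\nabla_x g(\pb,\xb)^*\lambda=0$, and uses the constant-rank property to transport the resulting linear dependence of $\{\nabla_x g_i(p_k,x_k)\,|\,i\in E'\cup I^+(\lambda^k)\}$ back to the points $(p_k,x_k)$, contradicting extremality. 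Your proof is direct and quantitative: a Carath\'eodory/extreme-point reduction to a multiplier supported on a linearly independent subfamily $J$, followed by the bound $\|\lambda_J\|\le\|v\|/\sigma_{\min}(A_J(p,x))$ and a uniform positive lower bound on these smallest singular values over a compact sub-neighborhood, taken over the finitely many subsets $J\subset E\cup\I(\pb,\xb)$ that are linearly independent at $(\pb,\xb)$ (by CRCQ, equivalently at every nearby point, which is exactly what lets you cover the $(p,x)$-dependent choice of $J$). Both proofs hinge on the same mechanism --- CRCQ freezes which gradient subfamilies are independent --- but yours produces an explicit modulus $\kappa$ in \eqref{EqPBMP}, while the paper's contradiction argument is shorter and avoids the compactness/singular-value bookkeeping at the price of being non-constructive. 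One small point worth making explicit in your write-up: the polyhedron of all multipliers need not have extreme points because the equality components form a lineality space, so the Carath\'eodory reduction should be phrased (as the paper does via $E'$) so that the selected support is genuinely linearly independent; your formulation already delivers this, but it deserves a sentence.
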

\begin{proof} Implication CRCQ$\Longrightarrow$MSCQ for the partial versions under consideration can be deduced from \cite[Proposition~2.5]{Jan84}. Let us verify that CRCQ$\Longrightarrow$BMP. Assuming the contrary, find sequences $(p_k,x_k)\longsetto{{\Gr\Gamma}}(\pb,\xb)$ and $v_k\in\B_{\R^n}\cap\nabla g_x(p_k,x_k)^*N_C(g(p_k,x_k))$ for which $v_k\not\in\nabla_x g(p_k,x_k)^*(N_C(g(p_k,x_k))\cap k\B_{R_n}$ as $k\in\N$. Choose a subset $E'\subset E$ such that $\{\nabla_x g_i(\pb,\xb)\|\;i\in E'\}$ is a base of the span of the gradient family $\{\nabla_x g_i(\pb,\xb)|\;i\in E\}$. The imposed partial CRCQ tells us that for all $k$ sufficiently large the set $\{\nabla_x g_i(p_k,x_k)|\;i\in E'\}$ is also a base of the span of $\{\nabla_x g_i(p_k,x_k)|\;i\in E\}$, and hence the set
$$
\Lambda_k:=\big\{\lambda\in N_C\big(g(p_k,x_k)\big)\big|\;v_k=\nabla_x g(p_k,x_k)^*\lambda,\;\lambda_i=0\;\mbox{ for }\;i\in E\setminus E'\big\}
$$
is a nonempty convex polyhedron having at least one extreme point. Let $\lambda^k$ denote an extreme point of $\Lambda_k$ meaning that the gradient family $\{\nabla_x g_i(p_k,x_k)|\;i\in E'\cup I^+(\lambda^k)\}$ is linearly independent. After passing to a subsequence if needed, suppose that the sequence $\lambda^k/\norm{\lambda^k}$ converges to some $\lambda\in N_C(g(\pb,\xb))\cap\B_{\R^l}$. Since $\|\lambda^k\|>k$ and $v_k\in\B_{\R^n}$, we obtain
\begin{eqnarray*}
\Big\|\nabla_xg(p_k,x_k)^*\frac{\lambda^k}{\|\lambda^k\|}\Big\|=\frac{\norm{x_k^\ast}}{\norm{\lambda^k}}<\frac 1k,\quad k\in\N,
\end{eqnarray*}
yielding $\nabla_x g(\pb,\xb)^*\lambda=0$. Since $\lambda^k_i=0$ for $i\in E\setminus E'$, we also have $\lambda_i=0$ for these indices, and so the family $\{\nabla_x g_i(\pb,\xb)|\; i\in E'\cup I^+(\lambda)\}$ is linearly dependent. By $I^+(\lambda)\subset\I(\pb,\xb)$ due to $\lambda\in N_C(g(\pb,\xb))$ and by $\lambda^k_i>0$ for each $i\in I^+(\lambda)$ when $k\in$ is large, it follows that $I^+(\lambda)\subset I^+(\lambda^k)$. The partial CRCQ with respect to $x$ at $(\pb,\xb)$ ensures that the family $\{\nabla_x g_i(p_k,x_k)|\; i\in E'\cup I^+(\lambda)\}$ is linearly dependent, and hence the family $\{\nabla_x g_i(p_k,x_k)|\;i\in E'\cup I^+(\lambda^k)\}$ is linearly dependent as well. This contradicts our choice of $\lambda^k$ and thus shows that the partial BMP with respect to $x$ must hold at $(\pb,\xb)$.\end{proof}

Following \cite{JeyYen04} and slightly adjusting the name, we say that $\xb$ is a {\em parametrically stable} solution to \eqref{EqConstrSystem} on $P$ at $\pb$ if for every neighborhood $U$ of $\xb$ there is some neighborhood $V$ of $\pb$ such that
\begin{eqnarray}\label{stab-sol}
\Gamma(p)\cap U\ne\emp\;\mbox{ whenever }\;p\in V.
\end{eqnarray}

Now we are ready to establish relationships between Robinson stability of PCS \eqref{EqConstrSystem} and the aforementioned properties and constraint qualifications.

\begin{Theorem}[first-order relationships for Robinson stability]\label{ThEquMSP_BMP} Let $(\op,\ox)\in\gph\Gamma$ in \eqref{Gamma}, and let $\kk\ge 0$. Consider the following statements for PCS \eqref{EqConstrSystem} under the imposed standing assumptions:

{\bf(i)} Robinson stability holds for \eqref{EqConstrSystem} at $(\op,\ox)$.

{\bf(ii)} The point $\xb$ is a parametrically stable solution to \eqref{EqConstrSystem} on $P$ at $\pb$, and the partial MSCQ together with the partial BMP with respect to $x$ are satisfied at $(\op,\ox)$.

Then {\rm (i)}$\Longrightarrow${\rm(ii)} with $\bmpx(g,C)(\pb,\xb)\le\subregx(g,C)(\pb,\xb)$. Conversely, if $C$ is subamenable at $g(\pb,\xb)$, then {\rm (ii)}$\Longrightarrow${\rm (i)} and we have the exact formula $\subregx(g,C)(\pb,\xb)=\bmpx(g,C)(\pb,\xb)$.
\end{Theorem}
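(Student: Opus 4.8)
The plan is to prove the two implications separately, treating the modulus bounds alongside each direction.

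\textbf{Direction (i)$\Rightarrow$(ii).} Assume RS holds at $(\op,\ox)$ with some modulus $\kk>\subregx(g,C)(\pb,\xb)$, so that \eqref{EqUMSP} is valid on $V\times U$. First I would derive parametric stability: given a neighborhood $U'\subset U$ of $\xb$, shrink $V$ so that $g(p,\xb)$ stays close to $g(\op,\ox)\in C$ (using the continuity standing assumption), hence ${\rm dist}(g(p,\xb);C)$ is small; then \eqref{EqUMSP} applied at $x=\xb$ forces ${\rm dist}(\xb;\Gamma(p))$ small, so $\Gamma(p)\cap U'\ne\emp$. Next, partial MSCQ is immediate: for $p\in V$ and $x\in\Gamma(p)\cap U$, estimate \eqref{EqUMSP} restricted to a neighborhood of $x$ is exactly \eqref{mscq} with the \emph{uniform} constant $\kappa_{p,x}=\kk$. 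For partial BMP, fix $p\in V$, $x\in\Gamma(p)\cap U$ and $v\in N_{\Gamma(p)}(x)$; since MSCQ gives metric subregularity of $g(p,\cdot)-C$ at $(x,0)$ with modulus $\kk$ (from \eqref{EqUMSP}), apply Lemma~\ref{CorBMP} with $f=g(p,\cdot)$, $\oz=x$ to produce $\lambda\in N_C(g(p,x))\cap\kk\|v\|\B_{\R^n}$ with $\nabla_x g(p,x)^*\lambda=v$, i.e. $\lambda\in\Lambda(p,x,v)\cap\kk\|v\|\B_{\R^n}$. Letting $\kk\dn\subregx(g,C)(\pb,\xb)$ yields $\bmpx(g,C)(\pb,\xb)\le\subregx(g,C)(\pb,\xb)$.

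\textbf{Direction (ii)$\Rightarrow$(i).} This is the harder direction and the place where subamenability of $C$ is essential. Assume (ii) holds with BMP modulus $\kk>\bmpx(g,C)(\pb,\xb)$, and suppose for contradiction that RS fails. Then there are sequences $(p_k,x_k)\to(\op,\ox)$ with $x_k\notin\Gamma(p_k)$ (eventually) and ${\rm dist}(x_k;\Gamma(p_k))>\kk\,{\rm dist}(g(p_k,x_k);C)$. The strategy is an Ekeland-type / penalization argument: for each fixed $p_k$ consider the function $x\mapsto{\rm dist}(g(p_k,x);C)$, use subamenability to write $C\cap W=\{z\mid q(z)\in Q\}$ with $q\in\J^1$, $Q$ closed convex and $z\mapsto q(z)-Q$ metrically subregular, which gives a local error bound ${\rm dist}(z;C)\le\beta\,{\rm dist}(q(z);Q)$ near $g(\pb,\xb)$; combine this with a contingent-descent analysis to locate a point $\tilde x_k$ near $x_k$, still outside $\Gamma(p_k)$ but ``almost minimizing'' the composite distance, so that the necessary optimality condition produces a normal direction. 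Passing to subsequences and normalizing, I would extract from the first-order optimality condition at $\tilde x_k$ a unit-norm limiting normal $v\in N_{\Gamma(p_k)}(\tilde x_k)$ (using MSCQ to get the normal cone representation $N_{\Gamma(p_k)}(\tilde x_k)\subset\nabla_x g(p_k,\tilde x_k)^*N_C(g(p_k,\tilde x_k))$), then invoke BMP to bound the associated multiplier $\lambda_k$ by $\kk\|v\|$; the failure inequality together with these bounds contradicts the definition of $\kk$. For the exact-bound equality, the (i)$\Rightarrow$(ii) half already gives $\bmpx\le\subregx$, and the contradiction argument above, run with any $\kk>\bmpx(g,C)(\pb,\xb)$, shows RS holds with modulus $\kk$, hence $\subregx(g,C)(\pb,\xb)\le\bmpx(g,C)(\pb,\xb)$, giving equality.

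\textbf{Main obstacle.} The delicate part is the quantitative penalization step in (ii)$\Rightarrow$(i): one must convert the pointwise, possibly non-uniform, MSCQ and the uniform BMP into the uniform distance estimate \eqref{EqUMSP}, and this requires carefully handling the parameter $p_k$ (which lives only in a topological space, so no calculus in $p$ is available) by freezing it, performing an Ekeland variational-principle argument in $x$ on the lower semicontinuous function ${\rm dist}(g(p_k,\cdot);C)$, and then controlling the perturbation terms so that the limiting first-order condition survives the limit $k\to\infty$ with the constant $\kk$ intact. The subamenability hypothesis enters precisely to guarantee, via the associated error bound, that ${\rm dist}(g(p_k,\cdot);C)$ behaves well enough (admits the needed subdifferential/contingent estimates at points off $C$) for the optimality condition to yield a genuine limiting normal to $\Gamma(p_k)$ rather than merely a regular normal to a sublevel set; keeping the modulus bookkeeping sharp through this passage is what makes the exact-bound formula nontrivial.
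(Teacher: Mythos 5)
Your first direction (i)$\Longrightarrow$(ii) is correct and coincides with the paper's argument: parametric stability from \eqref{EqUMSP} evaluated at $x=\xb$, MSCQ read off directly, and BMP via Lemma~\ref{CorBMP} applied to $g(p,\cdot)-C$ with the uniform modulus $\kk$.

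The converse direction, however, has a genuine gap. Your Ekeland/penalization scheme does not mesh with the hypothesis you are supposed to exploit. First, there is an internal inconsistency: the almost-minimizer $\tilde x_k$ produced by Ekeland applied to $x\mapsto{\rm dist}(g(p_k,x);C)$ necessarily lies \emph{outside} $\Gamma(p_k)$ (otherwise there is nothing to prove), yet you then speak of a limiting normal $v\in N_{\Gamma(p_k)}(\tilde x_k)$ and of the representation $N_{\Gamma(p_k)}(\tilde x_k)\subset\nabla_xg(p_k,\tilde x_k)^*N_C(g(p_k,\tilde x_k))$ — both of which require $\tilde x_k\in\Gamma(p_k)$. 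Second, and more fundamentally, what Ekeland delivers at an infeasible point is a unit vector $\lambda_k\in N_C(z_k)$, $z_k\in\Pi_C(g(p_k,\tilde x_k))$, with $\norm{\nabla_xg(p_k,\tilde x_k)^*\lambda_k}$ small; contradicting this requires a condition of the type \eqref{msr-f} (``no nonzero degenerate multiplier''), which is \emph{not} what BMP asserts. BMP \eqref{EqPBMP} is an existence statement — for each $v\in N_{\Gamma(p)}(x)$ at a \emph{feasible} $x$ there is \emph{some} multiplier of norm $\le\kk\norm{v}$ — and it is perfectly compatible with the existence of large or degenerate multipliers, so the Ekeland output cannot be played against it. The paper instead runs a primal argument: using parametric stability, project $x$ onto $\Gamma(p)$ to get $\bar\xi$, write $x-\bar\xi\in\Hat N_{\Gamma(p)}(\bar\xi)$, invoke MSCQ plus Lemma~\ref{CorBMP} and then BMP to select $\lambda\in\Lambda(p,\bar\xi,x-\bar\xi)$ with $\norm{\lambda}\le\kk\norm{x-\bar\xi}$, and estimate $\norm{x-\bar\xi}^2=\skalp{\lambda,\nabla_xg(p,\bar\xi)(x-\bar\xi)}$ from above by $\norm{\lambda}\,{\rm dist}(g(p,x);C)$ plus error terms.

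You have also misplaced the role of subamenability. It is not needed to make ${\rm dist}(g(p,\cdot);C)$ amenable to subdifferential calculus off $C$; it is needed at the feasible point $\bar\xi$ to control the term $\skalp{\lambda,\bar z-g(p,\bar\xi)}$, where $\bar z\in\Pi_C(g(p,x))$ sits at a \emph{definite} (not infinitesimal) distance from $g(p,\bar\xi)$. For a general closed $C$ a limiting normal gives no one-sided bound on such inner products; writing $\lambda=\nabla q(g(p,\bar\xi))^*\mu$ with $\mu\in N_Q(q(g(p,\bar\xi)))$, $\norm{\mu}\le\kappa_C\norm{\lambda}$, and using the convexity of $Q$ yields $\skalp{\lambda,\bar z-g(p,\bar\xi)}\le\delta\,\kappa_C\norm{\lambda}\,\norm{\bar z-g(p,\bar\xi)}$ with $\delta$ as small as desired on a small neighborhood. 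This uniform approximate-convexity of normals to $C$ is exactly what closes the estimate and, since $\delta$ is arbitrary, what gives the sharp equality $\subregx(g,C)(\pb,\xb)=\bmpx(g,C)(\pb,\xb)$ rather than merely an inequality with an inflated constant.
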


\begin{proof}
To verify (i)$\Longrightarrow$(ii), find by (i) neighborhoods $V$ and $U$ such that the standing assumptions and the estimate \eqref{EqUMSP} are satisfied with some modulus $\kk\ge 0$. Then the partial MSCQ with respect to $x$ follows directly from definition \eqref{mscq}. Furthermore, for every $(p,x)\in(V\times U)\cap\Gr\Gamma$ the mapping $M_p\colon\R^n\rightrightarrows\R^l$ with $M_p(\xi):=g(p,\xi)-C$ is metrically subregular at $(x,0)$ with the same modulus $\kk$, and thus Lemma~\ref{CorBMP} tells us that for each $(p,x)\in\Gr\Gamma\cap(V\times U)$ and each $v\in N_{\Gamma(p)}(x)= N_{M_p^{-1}(0)}(x)$ there exists some $\lambda\in\|v\|\kappa\B_{\R^l}\cap N_C(g(p,x))$ with $v=\nabla_xg(p,x)^*\lambda$. This justifies \eqref{EqPBMP} and the partial BMP with respect to $x$ at $(\pb,\xb)$, and therefore $\bmpx(g,C)(\pb,\xb)\le\subregx(g,C)(\pb,\xb)$.

To finish the proof of the claimed implication, it remains to show the parametric stability of $\ox$. Take any neighborhood $\Tilde U$ of $\xb$ and find the radius $r>0$ with ${\rm int}\B(\xb;r)\subset\Tilde U$. By the continuity of $g$ there is a neighborhood $\Tilde V$ of $\pb$ such that $\kappa\|g(p,\xb)-g(\pb,\xb)\|< r$ whenever $p\in\tilde V$. This yields
\begin{eqnarray*}
{\rm dist}\big(\xb;\Gamma(p)\big)\le\kappa{\rm dist}\big(g(p,\xb);C\big)\le\kappa\|g(p,\xb)-g(\pb,\xb)\|< r\;\mbox{ for all }\;p\in V\cap\Tilde V,
\end{eqnarray*}
which shows therefore that $\emp\ne\Gamma(p)\cap{\rm int}\B(\xb;r)\subset\Gamma(p)\cap\Tilde U$ for all $p\in V\cap\Tilde V$. Hence $\xb$ is parametrically stable on $P$ at $\pb$ by definition \eqref{stab-sol}, and we fully justify implication (i)$\Longrightarrow$(ii).

To verify the converse implication (ii)$\Longrightarrow$(i), suppose that $C$ is subamenable at $\gb$. By (ii) take $q$, $Q$, and $W$ according to Definition~\ref{subamen} and find neighborhoods $V$ of $\pb$ and $U$ of $\xb$ for which conditions \eqref{mscq}, \eqref{EqPBMP}, and the standing assumptions are satisfied. Choosing $\kappa>0$ so that \eqref{EqPBMP} holds and by shrinking $W$ if necessary, suppose that $q(\cdot)-Q$ is metrically subregular with modulus $\kappa_C$ at every point $(z,0)$ with $z\in q^{-1}(Q)\cap W$. Denote
\begin{eqnarray*} \label{EqConstantL}
L:=2\kappa_C(3+\norm{\gxb})+2,
\end{eqnarray*}
and let $0<\delta<\min\{1/\kappa L,1\}$ be arbitrarily fixed. Then choose some constant $r_z>0$ so that
$
{\rm int}\B\big(\gb;2r_z\big)\subset W\;\mbox{ and }\;\norm{q(z)-q(z')-\nabla q(z')(z-z')}\le\delta\norm{z-z'}\;\mbox{ for all }\;z,z'\in{\rm int}\B\big(\gb;2r_z\big).
$
Shrinking $U$ and $V$ allows us to get $\norm{\nabla_x g(p,x)-\nabla_x g(\pb,\xb)}\le\delta$ and $\norm{g(p,x)-\gb}<r_z$ for all $(p,x)\in V\times U$. Further, let $r>0$ be such that $\B(\xb;3r)\subset U$ and, by the parametric stability of $\xb$ on $P$ and by shrinking again $V$, we have $\Gamma(p)\cap{\rm int}\B(\xb;r)\ne\emp$ whenever $p\in V$. Fix now $(x,p)\in{\rm int}\B(\xb;r)\times V$ and let $\bar\xi$ be a global solution to the {\em optimization} problem
\begin{eqnarray}\label{opt}
\mbox{ minimize }\;\frac 12\norm{\xi-x}^2\;\mbox{ subject to }\;\xi\in\Gamma(p).
\end{eqnarray}
Such a global solution surely exists due to the closedness of $\Gamma(p)\cap\B(\xb;3r)$. Then $\norm{\bar \xi-x}<2r$ and hence $\norm{\bar\xi-\xb}<3r$ yielding $\bar\xi\in{\rm int}\B(\xb;3r)\subset U$. This verifies the metric subregularity of $g(p,\cdot)-C$ at $(\bar\xi,0)$. Applying now the necessary optimality condition in \eqref{opt} from \cite[Proposition~5.1]{Mor06a} and then using Lemma~\ref{CorBMP} give us the inclusions
$$
-(\bar\xi-x)\in\Hat N_{\Gamma(p)}(\bar\xi)\subset\nabla_x g(p,\bar\xi)^*N_C\big(g(p,\bar\xi)\big),
$$
which show that $\Lambda(p,\bar\xi,x-\bar\xi)\ne\emp$, and thus there is a multiplier $\lambda\in\Lambda(p,\bar\xi,x-\bar\xi)\cap\kappa\norm{\bar\xi-x}\B_{\R^l}$ by the imposed partial BMP. Moreover, we have
$$
g(p,\bar\xi)\in{\rm int}\B\big(\gb;r_z\big)\cap C\subset W\cap q^{-1}(Q)
$$
while concluding therefore that the mapping $q(\cdot)-Q$ is metrically subregular at $(g(p,\bar\xi),0)$. Using again Lemma~\ref{CorBMP} provides a multiplier $\mu\in N_Q(q(g(p,\bar\xi))\cap\norm{\lambda}\kappa_C\B_{\R^{d}}$ with $\lambda=\nabla q(g(p,\bar\xi))^*\mu$.

Choose now $\bar z$ as a projection of the point $g(p,x)$ on the set $C$. Since $g(p,x)\in{\rm int}\B(\gb;r_z)$ and $\gb\in C$, we get $\norm{\bar c-\gb}<2r_z$ and thus
\begin{eqnarray*}
\skalp{\lambda,\bar z-g(p,\bar\xi)}=\skalp{\mu,\nabla q(g(p,\bar\xi))(\bar z-g(p,\bar\xi))}\le\skalp{\mu,q(\bar z)-q(g(p,\bar\xi))}+\delta\norm{\mu}\norm{\bar z-g(p,\bar\xi)}.
\end{eqnarray*}
Using $\skalp{\mu,q(\bar z)-q(g(p,\bar\xi))}\le 0$ by $\mu\in N_Q(q(g(p,\bar\xi)))$ together with $\norm{\bar z-g(p,\bar\xi)}\le\norm{\bar z-g(p,x)}+\norm{g(p,x)-g(p,\bar\xi)}\le 2\norm{g(p,x)-g(p,\bar\xi)}$ gives us the estimate
\begin{eqnarray*}
\skalp{\lambda,\bar z-g(p,\bar\xi)}\le 2\kappa_C\norm{\lambda}\delta\norm{g(p,x)-g(p,\bar\xi)}.
\end{eqnarray*}
Now taking into account the relationships
\begin{eqnarray*}
\lefteqn{\big\|g(p,x)-g(p,\bar\xi)-\nabla_x g(p,\bar\xi)(x-\bar\xi)\Big\|=\Big\|\int_0^1\Big(\nabla_x g\big(p,\bar\xi+t(x-\bar\xi)\big)-\nabla _x g(p,\bar\xi)\Big)(x-\bar\xi){\rm d}t\Big\|}\\
&\le&\int_0^1\Big(\big\|\nabla_x g\big(p,\bar\xi+t(x-\bar\xi)\big)-\nabla _x g(p,\bar\xi)\big\|\cdot\norm{x-\bar\xi}\Big){\rm d}t\\
&\le&\int_0^1\Big(\norm{\nabla_x g\big(p,\bar\xi+t(x-\bar\xi)\big)-\nabla_x g(\pb,\xb)}+\norm{\nabla _x g(p,\bar\xi)-\nabla_x g(\pb,\xb)}\Big)\norm{x-\bar\xi}{\rm d}t\le2\delta\norm{x-\bar\xi},
\end{eqnarray*}
we obtain $\norm{g(p,x)-g(p,\bar\xi)}\le(2\delta+\norm{\nabla_x g(p,\bar\xi)})\norm{x-\bar\xi}\le(3\delta+\norm{\gxb})\norm{x-\bar\xi}$ and
\begin{eqnarray*}
\norm{x-\bar\xi}^2&=&\skalp{\nabla_x g(p,\bar\xi)^*\lambda,x-\bar\xi}\\
&\le&\skalp{\lambda,g(p,x)-g(p,\bar{\xi})}+\norm{\lambda}\cdot\norm{g(p,x)-g(p,\bar\xi)-\nabla_x g(p,\bar\xi)(x-\bar\xi)}\\
&\le&\skalp{\lambda,g(p,x)-\bar z}+\skalp{\lambda,\bar z-g(p,\bar\xi)}+2\norm{\lambda}\delta\norm{x-\bar\xi}\\
&\le&\norm{\lambda}{\rm dist}\big(g(p,x);C\big)+\norm{\lambda}\delta\big(2\kappa_C\big(3\delta+\norm{\gxb})+2\big)\norm{x-\bar\xi}\\
&\le&\kappa\norm{x-\bar\xi}\big({\rm dist}(g(p,x);C)+\delta L\norm{x-\bar\xi}\big)
\end{eqnarray*}
due to \eqref{EqConstantL} and $\delta<1$. Rearranging yields
\begin{eqnarray*}
(1-\delta\kappa L)\norm{x-\bar\xi}^2\le\kappa\norm{x-\bar\xi}{\rm dist}\big(q(p,x);C\big),
\end{eqnarray*}
which implies that $\norm{x-\bar\xi}={\rm dist}(x;\Gamma(p))\le\frac{\kappa}{1-\delta\kappa L}{\rm dist}(g(p,x);C)$ and that the claimed Robinson stability holds with modulus $\kappa/(1-\delta\kappa L)$. This verifies implication (ii)$\Longrightarrow$(i). Finally, the arbitrary choice of $\delta>0$ close to zero allows us to conclude that the inequality $\subregx(g,C)(\pb,\xb)\le\bmpx(g,C)(\pb,\xb)$ is satisfied. Remembering the opposite inequality derived above, we arrive at the equality $\subregx(g,C)(\pb,\xb)=\bmpx(g,C)(\pb,\xb)$ and thus complete the proof of the theorem.\end{proof}

Note that the results of Theorem~\ref{ThEquMSP_BMP} yield new formulas for calculating the {\em exact bound} (infimum) ``subreg" of subregularity moduli of nonconvex mappings as follows.

\begin{Corollary}[calculating the exact subregularity bound]\label{CorExSubregBnd} Let $f\colon\R^n\to\R^l$ be continuously differentiable, and let $C\subset\R^l$ be subamenable at  $f(\xb)\in C$. If the mapping $x\mapsto F(x):=f(x)-C$ is metrically subregular at $(\xb,0)$, then the exact subregularity bound of $F$ at $(\ox,0)$ is calculated by
\begin{eqnarray}\nonumber
\subreg F(\xb,0)&=&\disp\limsup_{x\longsetto{{f^{-1}(C)}}\xb}\;\disp\sup_{v\in N_{f^{-1}(C)}(x)\cap\B_{\R^n}}\inf\big\{\|\lambda\|\big|\;\lambda\in N_C\big(f(x)\big),\,\nabla f (x)^*\lambda=v\big\}\\
\label{EqSubregBnd}&=&\disp\limsup_{x\longsetto{{f^{-1}(C)}}\xb}\;\inf\big\{\tau\ge 0\big|\;N_{f^{-1}(C)}(x)\cap\B_{\R^n}\subset\tau\nabla f(x)^*\big(N_C\big(f(x)\big)\cap\B_{\R^l}\big)\big\}\qquad\\
\nonumber &=&\disp\limsup_{x\longsetto{{f^{-1}(C)}}\xb}\;\inf\big\{\tau\ge 0\big|\;N_{f^{-1}(C)}(x)\cap\B_{\R^n}\subset\tau D^*F(x,0)(\B_{\R^l})\big\},
\end{eqnarray}
where $D^*F(x,0)(\lm):=\nabla f(x)^*\lm$ if $\lm\in N_C(f(x))$ and $D^*F(x,0)(\lm):=\emp$ otherwise.
\end{Corollary}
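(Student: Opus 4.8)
The plan is to derive the three displayed formulas for $\subreg F(\ox,0)$ by specializing Theorem~\ref{ThEquMSP_BMP} and the interpretations of Robinson stability in terms of metric subregularity. The key observation is that when $g$ does not depend on a parameter at all, i.e.\ $g(p,x)=f(x)$ and $P$ is a singleton $\{\op\}$, the parametric constraint system \eqref{EqConstrSystem} reduces to $f(x)\in C$, the solution map $\Gamma$ is the constant set $f^{-1}(C)$, and the Robinson stability estimate \eqref{EqUMSP} becomes exactly the metric subregularity estimate for $F=f-C$ at $(\ox,0)$. Under this reduction, $\subregx(g,C)(\op,\ox)=\subreg F(\ox,0)$, the parametric stability of $\ox$ and the partial MSCQ are automatic (the former is trivial, the latter is precisely the assumed metric subregularity of $F$ at $(\ox,0)$, which by definition holds at $(\ox,0)$ and persists at nearby feasible points by a standard argument, or one restricts $U$ so that the only relevant point is $\ox$ itself together with the necessity of subregularity at nearby points as in \eqref{mscq}), and the partial BMP with respect to $x$ becomes the statement that for all $x$ near $\ox$ in $f^{-1}(C)$ and all $v\in N_{f^{-1}(C)}(x)$ there is $\lambda\in N_C(f(x))$ with $\nabla f(x)^*\lambda=v$ and $\|\lambda\|\le\kappa\|v\|$. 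Hence $\bmpx(g,C)(\op,\ox)$ equals the $\limsup$ in the first line of \eqref{EqSubregBnd}. Since $C$ is subamenable at $f(\ox)$, Theorem~\ref{ThEquMSP_BMP} gives the exact formula $\subreg F(\ox,0)=\subregx(g,C)(\op,\ox)=\bmpx(g,C)(\op,\ox)$, which is the first equality.

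For the second and third equalities I would argue that the three $\limsup$ expressions have identical integrands. Fix $x$ near $\ox$ in $f^{-1}(C)$ and write
$$
\phi(x):=\sup_{v\in N_{f^{-1}(C)}(x)\cap\B_{\R^n}}\inf\big\{\|\lambda\|\,\big|\;\lambda\in N_C(f(x)),\ \nabla f(x)^*\lambda=v\big\},
$$
$$
\psi(x):=\inf\big\{\tau\ge 0\,\big|\;N_{f^{-1}(C)}(x)\cap\B_{\R^n}\subset\tau\,\nabla f(x)^*\big(N_C(f(x))\cap\B_{\R^l}\big)\big\}.
$$
The inclusion $\psi(x)\le\phi(x)$ is an easy consequence of Lemma~\ref{CorBMP} together with homogeneity of the cones: if $\tau>\phi(x)$ and $v\in N_{f^{-1}(C)}(x)\cap\B_{\R^n}$, then (using that subregularity of $F$ at $(\ox,0)$ forces subregularity of $f(\cdot)-C$ at $(x,0)$ for feasible $x$ near $\ox$, so Lemma~\ref{CorBMP} applies) there is $\lambda\in N_C(f(x))$ with $\nabla f(x)^*\lambda=v$ and $\|\lambda\|<\tau\le\tau$, whence $\lambda/\tau\in N_C(f(x))\cap\B_{\R^l}$ and $v=\tau\,\nabla f(x)^*(\lambda/\tau)$. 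Conversely $\phi(x)\le\psi(x)$: if $\tau>\psi(x)$ and $v\in N_{f^{-1}(C)}(x)\cap\B_{\R^n}$, the inclusion defining $\psi$ produces $\mu\in N_C(f(x))\cap\B_{\R^l}$ with $v=\tau\,\nabla f(x)^*\mu$, so $\lambda:=\tau\mu$ satisfies $\nabla f(x)^*\lambda=v$, $\lambda\in N_C(f(x))$, and $\|\lambda\|\le\tau$; taking the supremum over $v$ and the infimum over admissible $\tau$ gives $\phi(x)\le\psi(x)$. Thus $\phi\equiv\psi$, and the $\limsup$ of the two agree. The third equality is then purely notational: the coderivative $D^*F(x,0)$ of $F=f-C$ at a feasible point is computed from the graph of $F$ exactly as in the proof of Lemma~\ref{CorBMP} (the normal cone representation displayed there), giving $D^*F(x,0)(\lambda)=\nabla f(x)^*\lambda$ for $\lambda\in N_C(f(x))$ and $\emptyset$ otherwise, so $\tau\,\nabla f(x)^*(N_C(f(x))\cap\B_{\R^l})=\tau\,D^*F(x,0)(\B_{\R^l})$ and the last $\limsup$ coincides with the middle one.

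The main obstacle, and the only point requiring real care, is the first equality and in particular the passage through Theorem~\ref{ThEquMSP_BMP}: one must check that the degenerate ``constant'' parametric system with $P=\{\op\}$ genuinely satisfies the standing assumptions and the hypotheses of statement (ii) of that theorem, so that the subamenability of $C$ at $f(\ox)$ lets us invoke the nontrivial implication (ii)$\Rightarrow$(i) together with the exact-bound equality. This amounts to verifying (a) that $f$ being $\mathcal C^1$ supplies the continuity and partial-differentiability requirements trivially; (b) that ``parametric stability of $\ox$'' is vacuous here; and (c) that the assumed metric subregularity of $F$ at $(\ox,0)$ delivers the partial MSCQ — here one uses that metric subregularity of $F$ at $(\ox,0)$ in fact implies metric subregularity of $f(\cdot)-C$ at $(x,0)$ for every feasible $x$ in a neighborhood of $\ox$ (a standard local stability-under-restriction fact, or else one shrinks the neighborhood $U$ so that the only feasible point considered is $\ox$, matching \eqref{mscq} with the given modulus). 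Once these routine points are in place, the corollary follows directly from the cited theorem and the elementary cone manipulations above; no further estimates are needed.
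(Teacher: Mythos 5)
Your proposal is correct and follows essentially the same route as the paper: specialize Theorem~\ref{ThEquMSP_BMP} to the parameter-free system $g(p,x):=f(x)$, identify $\subregx$ with $\subreg F(\xb,0)$ and $\bmpx$ with the first $\limsup$ expression, invoke subamenability for the exact-bound equality, and note that the remaining two equalities are elementary cone/notation manipulations. The only blemish is the parenthetical fallback ``shrink $U$ so that the only feasible point is $\ox$'' (which is not generally possible); your primary justification --- that metric subregularity of $F$ at $(\xb,0)$ persists at all nearby feasible points with the same modulus --- is the correct one and is exactly what the paper uses.
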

\begin{proof} It follows from Theorem~\ref{ThEquMSP_BMP} when $g(p,x):=f(x)$. Indeed, in this case we have the relationships $\subreg F(\ox,0)=\subregx(g,C)(\pb,\xb)=\bmpx(g,C)(\pb,\xb)$, where the latter quantity is calculated by using the normal cone representation for inverse images from Lemma~\ref{CorBMP} with taking into account the imposed subamenability of $C$ and that the metric subregularity of $F$ at $(\ox,0)$ implies this property for $F$ at any $(x,0)$ with $x\in F^{-1}(0)$ close to $\ox$.
\end{proof}

The last formula in \eqref{EqSubregBnd} corresponds to the result by Zheng and Ng \cite[condition~(3.6)]{ZhNg07} obtained for convex-graph multifunctions, which is not the case in Corollary~\ref{CorExSubregBnd}. Observe that the subregularity bound calculations in \eqref{EqSubregBnd} are of a different type in comparison with known modulus estimates for subregularity (see, e.g., Kruger \cite{Kru15a} and the references therein), because \eqref{EqSubregBnd} uses information at points $x\in f^{-1}(C)$ {\em near} $\xb$ while other formulas usually apply quantities at points $x$ {\em outside} the set $f^{-1}(C)$. The main advantage of Corollary~\ref{CorExSubregBnd} in comparison with known results on subregularity for general nonconvex mappings is that we now precisely calculate the {\em exact bound} of subregularity while previous results provided only modulus estimates. It also seems to us that the subregularity modulus estimates of type \cite{Kru15a} are restrictive for applications to Robinson stability interpreted as the {\em uniform} metric subregularity; see Section~1. Indeed, the latter property is {\em robust} for the class of perturbations under consideration while the usual subregularity is not. Since this issue is not detected by estimates of type \cite{Kru15a}, it restricts their ``robust" applications.

The next theorem is the main result of this section providing verifiable conditions for Robinson stability of PCS \eqref{EqConstrSystem} involving the class of perturbation parameters $(P,\op,g(p,x))$ under consideration. For convenience of further applications we split the given system \eqref{EqConstrSystem} into two parts $(i=1,2)$:
\begin{eqnarray}\label{split}
g(p,x)=\big(g_1(p,x),g_2(p,x)\big)\in C_1\times C_2=C,g_i:P\times\R^n\to\R^{l_i},C_i\subset\R^{l_i},l_1+l_2=l
\end{eqnarray}
in such a way that it is known in advance (or easier to determine) that RS holds for $g_2(p,x)\in C_2$, while it is challenging to clarify this for the whole system $g(p,x)\in C$. It is particularly useful for the subsequent second-order analysis of RS and its applications to variational systems; see Sections~4,5.

Since our parameter space $P$ is general topological, we need a suitable differentiability notion for $g(p,x)$ with respect to $p$. It can be done by using the following approximation scheme in the image space $\R^l$. Given any $\zeta:P\to\R$ continuous at $\pb$, define the {\em image derivative} $\ImDp$ of $g$ in $p$ at $(\op,\ox)$ as the closed cone generated by $0$ and those $v\in\R^l$ for which there is a sequence $\{p_k\}\subset P$ with
\begin{eqnarray}\label{Dp}
\begin{array}{c}
\disp 0<\norm{g(p_k,\xb)-g(\pb,\xb)}<k^{-1},\norm{\nabla_x g(p_k,\xb)-\gxb}<k^{-1},\vert\zeta(p_k)-\zeta(\pb)\vert<k^{-1},\\
v=\disp\lim_{k\to\infty}\frac{g(p_k,\xb)-g(\pb,\xb)}{\norm{g(p_k,\xb)-g(\pb,\xb)}}.
\end{array}
\end{eqnarray}
If $P$ is a metric space  with metric $\rho$, the convergence of $p_k\to\pb$ can be ensured by letting $\zeta(p):=\rho(p,\pb)$. If $P$ is a subset of a normed space and $g$ is differentiable with respect to $p$ at $(\pb,\xb)$, then we obviously have the inclusion (with the contingent cone defined in \eqref{tan} via the norm topology of $P$)
\begin{eqnarray}\label{Dp-inj}
\gpb T_P(\pb)\subset\ImDp
\end{eqnarray}
valid for any function $\zeta$. Observe that inclusion \eqref{Dp-inj} holds as equality with $\zeta(p)=\|p-\op\|$ if the operator $\nabla_p g(\op,\ox)$ is injective while the inclusion may be strict otherwise.

\begin{Theorem}[first-order verification of Robinson stability for splitting systems]\label{ThPartFOReg} Given $\zeta:P\to\R$ continuous at $\ox$, assume that for every $v\in\ImDp$ and every $t_k\dn 0$ there is $u\in\R^n$ satisfying
\begin{eqnarray}\label{EqExistenceTangDir}
\disp\liminf_{k\to\infty}\big[{\rm dist}\big(\gb+t_k(v+\gxb u);C\big)/t_k\big]=0.
\end{eqnarray}
Suppose also that Robinson stability at $(\op,\ox)$ holds for the system $g_2(p,x)\in C_2$ in \eqref{split} and that for every $(0,0)\not=(v,u)\in\ImDp\times\R^n$ with $v+\gxb u\in T_C(\gb)$ we have
\begin{eqnarray}\label{EqPartFOReg}
\Big[\lambda=(\lambda^1,\lambda^2)\in\disp\prod_{i=1}^2 N_{C_i}\big(g_i(\pb,\xb);v_i+\nabla_x g_i(\pb,\xb)u\big),\;\nabla_x g(\pb,\xb)^*\lambda=0\Big]\Longrightarrow\lambda^1=0,
\qquad\end{eqnarray}
where $v=(v_1,v_2)\in\R^{l_1}\times\R^{l_2}$. Then Robinson stability at $(\op,\ox)$ holds for the whole system \eqref{split}.
\end{Theorem}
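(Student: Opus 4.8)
### Proof Strategy

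The plan is to derive Robinson stability of the full system $g(p,x)\in C$ from the already-assumed RS of the partial system $g_2(p,x)\in C_2$ by combining Theorem~\ref{ThEquMSP_BMP} (the characterization of RS via parametric stability, partial MSCQ, and partial BMP) with the directional metric subregularity criterion \eqref{msr-f} and a contradiction argument along a suitable sequence. First I would show that the condition \eqref{EqPartFOReg}, together with \eqref{EqExistenceTangDir}, yields the partial MSCQ with respect to $x$ for the full system at $(\op,\ox)$: assuming subregularity of $g(p,\cdot)-C$ fails at some $(p,x)$ near $(\op,\ox)$ with $x\in\Gamma(p)$, I would pass to a limiting critical direction, check that it lifts to a pair $(v,u)\in\ImDp\times\R^n$ with $v+\gxb u\in T_C(\gb)$ (the $v$-component is produced by the image-derivative scheme \eqref{Dp}, the $u$-component is the normalized decision-variable displacement), and then the failure of subregularity would supply a nonzero limiting normal $\lambda$ in the directional normal cone with $\nabla_x g(\pb,\xb)^*\lambda=0$, forcing $\lambda^1=0$ by \eqref{EqPartFOReg} — but then $\lambda^2\ne0$ would contradict the RS (hence, by Theorem~\ref{ThEquMSP_BMP}, the partial BMP and MSCQ) already known for the subsystem $g_2(p,x)\in C_2$.

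Next I would establish the \emph{parametric stability} of $\ox$ for the full system on $P$ at $\pb$. This is where condition \eqref{EqExistenceTangDir} does its essential work: it guarantees that along any sequence $p_k\to\pb$ realizing a direction $v\in\ImDp$, there is a decision-variable correction $u$ for which $\mathrm{dist}(g(p_k,\ox)+t_k\gxb u;C)=o(t_k)$, i.e.\ the feasible set $\Gamma(p_k)$ is nonempty and meets a shrinking neighborhood of $\ox$. Combined with the parametric stability coming from the RS of the subsystem (via Theorem~\ref{ThEquMSP_BMP}(ii)), one gets that $\Gamma(p)\cap U\ne\emp$ for $p$ near $\pb$; the subsystem controls the $C_2$-block while \eqref{EqExistenceTangDir} controls the full residual. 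I would phrase this as: pick any neighborhood $U$ of $\ox$; if parametric stability of the full system failed, extract $p_k\to\pb$ with $\Gamma(p_k)\cap U=\emp$, build the image-derivative direction $v$ and the correction $u$ from \eqref{EqExistenceTangDir}, and produce points $x_k\in U$ with $\mathrm{dist}(g(p_k,x_k);C)$ of order $o(t_k)$; then MSCQ (just proved) upgrades these to genuine feasible points in $U$, a contradiction.

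Having verified that $\ox$ is parametrically stable for the full system and that partial MSCQ holds at $(\op,\ox)$, the remaining ingredient is the partial BMP with respect to $x$ for the full system, i.e.\ the uniform bound $\Lambda(p,x,v)\cap\kappa\|v\|\B_{\R^n}\ne\emp$. I would argue this again by contradiction: suppose along $(p_k,x_k)\to(\op,\ox)$ in $\Gr\Gamma$ there are $v_k\in N_{\Gamma(p_k)}(x_k)\cap\B_{\R^n}$ whose cheapest multipliers $\lambda^k\in\Lambda(p_k,x_k,v_k)$ have $\|\lambda^k\|\to\infty$. Normalizing $\lambda^k/\|\lambda^k\|\to\lambda$, one obtains $\lambda\in N_C(\gb)$ (using the directional normal cone, as in \eqref{dnc}, along the direction determined by the normalized displacements) with $\nabla_x g(\pb,\xb)^*\lambda=0$ and $\lambda\ne0$; splitting $\lambda=(\lambda^1,\lambda^2)$, condition \eqref{EqPartFOReg} kills $\lambda^1$, and then $\lambda^2$ is a nonzero singular multiplier for the subsystem $g_2(p,\cdot)-C_2$, contradicting its BMP (equivalently, its RS) guaranteed by hypothesis and Theorem~\ref{ThEquMSP_BMP}. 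With parametric stability, MSCQ, and BMP all established for the full system, the converse implication of Theorem~\ref{ThEquMSP_BMP} — applicable once $C$ is known subamenable, or otherwise via a direct estimate mirroring that proof — delivers Robinson stability for \eqref{split}.

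The main obstacle I anticipate is the careful bookkeeping of \emph{directional} normal cones when passing to limits: the quantities $N_{C_i}(g_i(\pb,\xb);v_i+\nabla_x g_i(\pb,\xb)u)$ in \eqref{EqPartFOReg} are genuinely directional, so each contradiction argument must produce not just a limiting normal but one attached to the correct direction, which requires aligning the sequence $t_k\dn0$ from \eqref{EqExistenceTangDir}, the image-space sequence from \eqref{Dp}, and the normalized decision displacements $(x_k-\ox)/\|x_k-\ox\|$ simultaneously. A second delicate point is that the hypotheses only assert RS of the $C_2$-subsystem, so every place where I invoke "the subsystem is fine" must be routed through Theorem~\ref{ThEquMSP_BMP} to convert RS into the concrete MSCQ/BMP statements that the contradiction arguments actually consume; I should also double-check that \eqref{EqExistenceTangDir} is exactly what is needed — and sufficient — to run the parametric-stability extraction without circularity.
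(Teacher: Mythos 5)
Your plan routes the endgame through the converse implication (ii)$\Longrightarrow$(i) of Theorem~\ref{ThEquMSP_BMP}, and that is where it breaks: that implication is proved only when $C$ is \emph{subamenable} at $\gb$, whereas Theorem~\ref{ThPartFOReg} assumes nothing beyond closedness of $C=C_1\times C_2$. For a general closed set, parametric stability together with partial MSCQ and partial BMP does \emph{not} yield Robinson stability; the projection--multiplier argument behind (ii)$\Longrightarrow$(i) hinges on the estimate $\skalp{\lambda,\bar z-g(p,\bar\xi)}\le 2\kappa_C\|\lambda\|\delta\|g(p,x)-g(p,\bar\xi)\|$ for $\bar z\in C$ and $\lambda\in N_C(g(p,\bar\xi))$, which is exactly what the subamenable representation $C\cap W=q^{-1}(Q)$ with $Q$ convex supplies and which fails for limiting normals to an arbitrary closed set. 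Your parenthetical ``or otherwise via a direct estimate mirroring that proof'' is precisely the missing content, not a detail. The paper does not verify the three conditions of Theorem~\ref{ThEquMSP_BMP}(ii) at all; it runs a direct contradiction argument built on the penalized auxiliary problem \eqref{EqOptProb1} (minimize $\|y\|+\sigma_k\|x-\Tilde x_k\|^2$ subject to $g_1(p_k,x)+y\in C_1$, $g_2(p_k,x)\in C_2$, with $\sigma_k$ calibrated by \eqref{sigma}), whose exact minimizers $(\xb_k,\oy_k)$ simultaneously produce the nonzero direction $(\bar v,\bar u)$, the tangency $\bar v+\gxb\bar u\in T_C(\gb)$ (via \eqref{EqExistenceTangDir} forcing $\|\oy_k\|/t_k\to 0$), and a multiplier with $\|\lambda^1\|=1$ that survives the limit and contradicts \eqref{EqPartFOReg}. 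This device is absent from your outline.

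Two intermediate steps would also not go through as written. Your parametric-stability extraction ``upgrades'' approximately feasible points to genuine feasible points using MSCQ, but MSCQ only asserts subregularity \emph{at points of} $\Gamma(p)$, on neighborhoods $U_{p,x}$ that may shrink with $(p,x)$; it provides no distance estimate at a merely approximately feasible point --- that estimate is what Robinson stability itself asserts, so the step is circular (a risk you flag but do not resolve). Likewise, your BMP argument must convert an unbounded sequence of cheapest multipliers into a nonzero $\lambda$ lying in a \emph{directional} normal cone attached to some $(v,u)\ne(0,0)$ covered by \eqref{EqPartFOReg}; if the bad sequence sits at $(\pb,\xb)$ itself, or approaches it with $\|x_k-\xb\|+\|g(p_k,\xb)-\gb\|=0$, there is no admissible nonzero direction and \eqref{EqPartFOReg} is silent. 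The paper's penalization scheme is engineered exactly so that the extracted sequence has $t_k>0$ and a genuinely nonzero limit direction, which is what makes the directional hypothesis usable.
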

\begin{proof} Assuming on the contrary that Robinson stability fails for \eqref{split} at $(\op,\ox)$, for any $\kappa>0$ and any neighborhoods $U$ of $\xb$ and $V$ of $\pb$ we find $(p,x)\in V\times U$ such that
\begin{eqnarray}\label{EqNonMSP}
{\rm dist}\big(x;\Gamma(p)\big)>\kappa{\rm dist}\big(g(p,x);C\big).
\end{eqnarray}
Our goal is to show by several steps that \eqref{EqNonMSP} eventually contradicts the imposed assumption \eqref{EqPartFOReg} by using first-order necessary optimality conditions in a certain nonsmooth optimization problem under the metric subregularity constraint qualification.

First observe that, since Robinson stability at $(\pb,\xb)$ holds for the system $g_2(p,x)\in C_2$, we get $R>0$ together with a neighborhood $\Bar V$ of $\pb$ and a positive constant $\kappa_2$ satisfying
\begin{eqnarray}\label{EqPartMSP}
{\rm dist}\big(x;\Gamma_2(p)\big)\le\kappa_2{\rm dist}\big(g_2(p,x);C_2\big)\;\mbox{ for all }\;p\in\Bar V,\;x\in{\rm int}\B(\xb;R),
\end{eqnarray}
where $\Gamma_2(p):=\{x|\;g_2(p,x)\in C_2\}$. The standing assumptions allow us to claim that for every $p\in\bar V$ the mapping $g(p,\cdot)$ is continuously differentiable on ${\rm int}\B(\xb;R)$ and then to construct a sequence of neighborhoods $V_k\subset V_{k-1}$ with $V_0:=\bar V$ together with positive radii $R_k\le\min\{R/2,1/k\}$ such that
\begin{eqnarray*}
\norm{\nabla_x g(p,x)-\nabla_x g(\pb,\xb)}\le k^{-1}\;\mbox{ whenever }\;(p,x)\in V_k\times{\rm int}\B(\xb;R_k).
\end{eqnarray*}
Furthermore, for each $k\in\N$ there exist a neighborhood $\Bar V_k\subset V_k$ and a radius $r_k\le R_k/4$ for which
\begin{eqnarray*}
\norm{g(p,x)-g(\pb,\xb)}<\disp\min\Big\{\frac{R_k}{4k^2\max\{\kappa_2,1\}},\frac 1k\Big\}\;\mbox{ as }\;(p,x)\in\Bar V_k\times{\rm int}\B(\xb;r_k).
\end{eqnarray*}
There is no loss of generality to suppose that $\vert\zeta(p)-\zeta(\pb)\vert\le 1/k$ for all $p\in\Bar V_k$. According to \eqref{EqNonMSP} we select $(p_k,x_k)\in\Bar V_k\times{\rm int}\B(\xb;r_k)$ satisfying
\begin{eqnarray*}
{\rm dist}\big(x_k;\Gamma(p_k)\big)>(k+\kappa_2){\rm dist}\big(g(p_k,x_k);C\big)
\end{eqnarray*}
and by using \eqref{EqPartMSP} find $\Tilde x_k\in\Gamma_2(p_k)$ such that
\begin{eqnarray*}
\norm{\Tilde x_k-x_k}\le\kappa_2{\rm dist}\big(g_2(p_k,x_k);C_2\big)\le\kappa_2\norm{g_2(p_k,x_k)-g_2(\pb,\xb)}<\frac{R_k}{4k^2}
\end{eqnarray*}
and hence $\norm{\Tilde x_k-\xb}\le\norm{\Tilde x_k-x_k}+\norm{x_k-\xb}<R_k/4k^2+R_k/4\le R_k/2$. Since $\norm{\nabla_x g(p_k,x)}\le\norm{\nabla_x g(\pb,\xb)}+1/k\le L:=\norm{\nabla_x g(\pb,\xb)}+1$ for all $x\in{\rm int}\B(\xb;R_k)$, we conclude that $g(p_k,\cdot)$ is Lipschitz continuous on ${\rm int}\B(\xb;R_k)$ with the modulus $L$ defined above, and therefore
\begin{eqnarray*}
{\rm dist}\big(g_1(p_k,\Tilde x_k);C_1\big)&=&{\rm dist}\big(g(p_k,\Tilde x_k);C\big)\le{\rm dist}\big(g(p_k,x_k);C\big)+L\norm{x_k-\Tilde x_k}\\
&\le&(L\kappa_2+1){\rm dist}\big(g(p_k,x_k);C\big).
\end{eqnarray*}
Further, it follows that ${\rm dist}(\Tilde x_k;\Gamma(p_k))\ge{\rm dist}(x_k;\Gamma(p_k))-\norm{x_k-\Tilde x_k}$, and thus we arrive at the estimates
\begin{eqnarray}\label{EqAux1}
\begin{array}{ll}
{\rm dist}\big(\Tilde x_k;\Gamma(p_k)\big)\ge{\rm dist}\big(x_k;\Gamma(p_k)\big)-\norm{x_k-\Tilde x_k}>(k+\kappa_2){\rm dist}\big(g(p_k,x_k);C\big)\\
-\norm{x_k-\Tilde x_k}\ge k\,{\rm dist}\big(g(p_k,x_k);C\big)\ge\disp\frac{k}{L\kappa_2+1}{\rm dist}\big(g(p_k,\Tilde x_k);C\big).
\end{array}
\end{eqnarray}
Now for any fixed $k\in\N$ define the positive number
\begin{eqnarray}\label{sigma}
\sigma_k:=\disp\frac{(L\kappa_2+1)^2}{k^2{\rm dist}\big(g(p_k,\Tilde x_k);C\big)}
\end{eqnarray}
and let $(\xb_k,\bar y_k)$ be an {\em optimal solution} to the problem of minimizing
\begin{eqnarray}\label{EqOptProb1}
\phi_k(x,y):=\norm{y}+\sigma_k\norm{x-\Tilde x_k}^2\;\mbox{ subject to }\;g_1(p_k,x)+y\in C_1,\;g_2(p_k,x)\in C_2.
\end{eqnarray}
Take $\Tilde y_k$ such that $g_1(p_k,\Tilde x_k)+\Tilde y_k\in C_1$ and $\norm{\Tilde y_k}={\rm dist}(g_1(p_k,\Tilde x_k);C_1)={\rm dist}(g(p_k,\Tilde x_k);C)$.
Since $(\Tilde x_k,\Tilde y_k)$ is a feasible solution to \eqref{EqOptProb1}, we get
\begin{eqnarray}\label{EqBnd_y_k1}
\norm{\oy_k}\le\phi_k(\bar x_k,\bar y_k)\le\phi_k(\Tilde x_k,\Tilde y_k)={\rm dist}\big(g(p_k,\tilde x_k);C\big).
\end{eqnarray}
It follows that $\oy_k\ne 0$ since otherwise $\xb_k\in\Gamma(p_k)$ while implying that
\begin{eqnarray*}
\frac{(L\kappa_2+1)^2}{k^2{\rm dist}(g(p_k,\Tilde x_k);C)}\big({\rm dist}(\Tilde x_k;\Gamma(p_k)\big)^2&=&\sigma_k{\rm dist}(\Tilde x_k;\Gamma(p_k))^2
\le\sigma_k\norm{\xb_k-\Tilde x_k}^2\\
&=&\phi_k(\xb_k,0)\le{\rm dist}\big(g(p_k,\Tilde x_k);C\big),
\end{eqnarray*}
which contradicts the last inequality in \eqref{EqAux1}. We have furthermore by the choice of $\sigma_k$ in \eqref{sigma} that
\begin{eqnarray*}
\sigma_k\norm{\xb_k-\Tilde x_k}^2=\disp\frac{(L\kappa_2+1)^2}{k^2{\rm dist}\big(g(p_k,\Tilde x_k);C\big)}\norm{\xb_k-\Tilde x_k}^2\le\phi_k(\xb_k,\bar y_k)\le{\rm dist}\big(g(p_k,\Tilde x_k);C\big)
\end{eqnarray*}
yielding in turn the following estimates for all $k\in\N$:
\begin{eqnarray}\label{EqBnd_xbk}
\norm{\xb_k-\Tilde x_k}
&\le&\disp\frac k{L\kappa_2+1}{\rm dist}\big(g(p_k,\Tilde x_k);C\big)
\le k\,{\rm dist}\big(g(p_k,x_k);C\big)\le k\norm{g(p_k,x_k)-g(\pb,\xb)}\qquad\\
\nonumber&<&\frac{R_k}{4k}.
\end{eqnarray}
Thus we get $\norm{\xb_k-\xb}\le\norm{\xb_k-\Tilde x_k}+\norm{\Tilde x_k-\xb}<R_k/4k+R_k/2\le\frac 34 R_k$ and
\begin{eqnarray*}
\norm{g(p_k,\xb_k)-g(\pb,\xb)}&\le&\norm{g(p_k,x_k)-g(\pb,\xb)}+\norm{g(p_k,\xb_k)-g(p_k,x_k)}\\
&\le&\frac{R_k}{4k^2\max\{\kappa_2,1\}}+L(\norm{\xb_k-\Tilde x_k}+\norm{\Tilde x_k-x_k})\\
&\le&\frac{R_k}{4k}\Big(\frac 1{k\max\{\kappa_2,1\}}+L+\frac Lk\Big)\to 0\;\mbox{ as }\;k\to\infty.
\end{eqnarray*}
Letting now $t_k:=\norm{\xb_k-\xb}+\norm{g(p_k,\xb)-\gb}$ for $k\in\N$ and passing to a subsequence if necessary allows us to claim that the sequence of $(\xb_k-\xb,g(p_k,\xb)-\gb)/t_k$ converges to some $(\bar u,\bar v)$. Then $(\bar u,\bar v)\ne(0,0)$ with $\bar v\in\ImDp$ by \eqref{Dp}. It shows furthermore that
\begin{eqnarray}\nonumber
\lefteqn{\lim_{k\to\infty}\frac{\norm{g(p_k,\xb_k)-\big(\gb+t_k(\bar v+\gxb\bar u)\big)}}{t_k}=\lim_{k\to\infty}\frac{\norm{g(p_k,\xb_k)-(g(p_k,\xb)+t_k\gxb \bar u)}}{t_k}}\\
\label{EqDirApprox}&=&\lim_{k\to\infty}\frac{\norm{g(p_k,\xb_k)-\big(g(p_k,\xb)+\gxb(\xb_k-\xb)\big)}}{t_k}\\
\nonumber&=&\lim_{k\to\infty}\frac{\Big\|\int_0^1\big[\nabla_x g\big(p_k,\xb+\xi(\xb_k-\xb)\big)-\gxb\big](\xb_k-\xb){\rm d}\xi\Big\|}{t_k}\le\lim_{k\to\infty}\frac{\norm{\xb_k-\xb}}{kt_k}=0.
\qquad\qquad\quad\end{eqnarray}

Our next step is to prove that the solution $\oy_k$ to the optimization problem \eqref{EqOptProb1} satisfies
\begin{eqnarray}\label{EqAuxTangDir}
\lim_{k\to\infty}\frac{\norm{\oy_k}}{t_k}=0.
\end{eqnarray}
Assume on the contrary that there is $\epsilon>0$ such that after passing to some subsequence we have
\begin{equation}\label{EqContraDic1}
\norm{\oy_k}\ge\epsilon t_k,\quad k\in\N.
\end{equation}
For every $k$ sufficiently large find $j(k)\le k$ with $t_k\norm{u}\le R_{j(k)}$ such that $\lim_{k\to\infty}j(k)=\infty$ and therefore
\begin{eqnarray*}
\norm{g(p_k,\xb+t_k u)-\big(g(p_k,\xb)+t_k\gxb u\big)}&=&\Big\|\disp\int_0^1 t_k\Big(\nabla_x g(p_k,\xb+\xi t_k u)-\gxb\Big)u{\rm d}\xi\Big\|\\
&\le&\frac{t_k\norm{u}}{j(k)},
\end{eqnarray*}
which implies by \eqref{EqExistenceTangDir} that $\liminf_{k\to\infty}{\rm dist}(g(p_k,\xb+t_k u);C)/t_k=0$. After passing to a subsequence we can assume that ${\rm dist}(g(p_k,\xb+t_k u);C)< t_k/k$ for all $k\in\N$ and then get the conditions $\norm{\Hat x_k-(\xb+t_k u)}\le\kappa_2t_k/k$ and $g_2(p_k,\Hat x_k)\in C_2$ for some $\Hat x_k$. This tells us that
$$
{\rm dist}\big(g(p_k,\Hat x_k);C\big)={\rm dist}\big(g_1(p_k,\Hat x_k);C_1\big)\le(1+L\kappa_2)t_k/k,\quad k\in\N.
$$
Picking $\Hat y_k$ with $g_1(p_k,\hat x_k)+\Hat y_k\in C_1$ and ${\rm dist}(g_1(p_k,\Hat x_k);C_1)=\norm{\Hat y_k}$, get by $\phi_k(\xb_k,\oy_k)\le\phi_k(\Hat x_k,\Hat y_k)$ that
\begin{eqnarray}\label{EqBnd_y_k2}
\begin{array}{ll}
\norm{\oy_k}&\le\norm{\Hat y_k}+\sigma_k\big(2\skalp{\xb_k-\Tilde x_k,\Hat x_k-\xb_k}+\norm{\Hat x_k-\xb_k}^2\big)\\
&\le(1+L\kappa_2)t_k/k+2\sigma_k\norm{\xb_k-\Tilde x_k}\cdot\norm{\Hat x_k-\xb_k}+\sigma_k\norm{\Hat x_k-\xb_k}^2
\end{array}
\end{eqnarray}
and then deduce from \eqref{EqBnd_xbk} the relationships
\begin{eqnarray}\label{EqBndSigma}
\disp\sigma_k\norm{\xb_k-\Tilde x_k}\le\frac{(L\kappa_2+1)^2}{k^2{\rm dist}\big(g(p_k,\Tilde x_k);C\big)}\frac{k\,{\rm dist}\big(g(p_k,\Tilde x_k);C\big)}{L\kappa_2+1}=\frac{L\kappa_2+1}{k}\to 0\;\mbox{ as }\;k\to\infty.\qquad
\end{eqnarray}
Using $\norm{\Hat x_k-\xb_k}\le\norm{\Hat x_k-\xb}+\norm{\xb_k-\xb}\le t_k(\norm{u}+\kappa_2/k+1)$ and combining \eqref{EqBnd_y_k1} and \eqref{EqBnd_y_k2} yield
\begin{eqnarray*}
\norm{\oy_k}&\le&\min\big\{(1+L\kappa_2)t_k/k+2\sigma_k\norm{\xb_k-\Tilde x_k}\cdot\norm{\Hat x_k-\xb_k}+\sigma_k\norm{\Hat x_k-\xb_k}^2,{\rm dist}\big(g(p_k,\Tilde x_k);C\big)\big\}\\
&\le&\frac{t_k}k(L\kappa_2+1)\Big(2\norm{u}+2\frac{\kappa_2}{k}+3\Big)+\min\big\{\sigma_k\norm{\Hat x_k-\xb_k}^2,{\rm dist}\big(g(p_k,\Tilde x_k);C\big)\big\}\\
&\le&\frac{t_k}k(L\kappa_2+1)\Big(2\norm{u}+2\frac{\kappa_2}{k}+3\Big)+\sqrt{\sigma_k{\rm dist}\big(g(p_k,\Tilde x_k);C\big)}\norm{\Hat x_k-\xb_k}\\
&\le&\frac{t_k}k(L\kappa_2+1)\Big(3\norm{u}+3\frac{\kappa_2}{k}+4\Big),
\end{eqnarray*}
which contradicts \eqref{EqContraDic1} and thus justifies \eqref{EqAuxTangDir}. Since $g(p_k,\xb_k)+(\oy_k,0)\in C$, the results obtained in \eqref{EqDirApprox}, \eqref{EqAuxTangDir} and definition \eqref{tan} of the contingent cone allow us to conclude that
\begin{eqnarray}\label{ctan}
\bar v+\gxb\bar u\in T_C\big(\gb\big).
\end{eqnarray}

Let us next show that the constraint mapping $G_k(x,y):=(g_1(p_k,x)+y-C_1,g_2(p_k,x)-C_2)$ of program \eqref{EqOptProb1} is {\em metrically subregular} at $((\xb_k,y_k),0)$. Indeed, pick $(x,y)\in{\rm int}\B(\xb_k;\frac{R_k}{4\max\{\kappa_2L,1\}})\times\R^{l_1}$ and find $\xi\in\Gamma_2(p_k)$ with $\norm{\xi-x}\le\kappa_2{\rm dist}(g_2(p_k,x);C_2)$. Since $g_2(p_k,\xb_k)\in C_2$, it gives us
\begin{eqnarray*}
\norm{\xi-x}\le\kappa_2\norm{g_2(p_k,x)-g_2(p_k,\xb_k)}\le\kappa_2L\norm{x-\xb_k}<\frac{R_k}4,
\end{eqnarray*}
and consequently we have $\xi\in{\rm int}\B(\xb;R_k)$ and $\norm{g_1(p_k,\xi)-g_1(p_k,x)}\le L\norm{\xi-x}$. Consider now a solution $\bar\vartheta\in\R^{l_1}$ to the following optimization problem:
\begin{eqnarray*}
\mbox{minimize }\;\norm{\vartheta-y}\;\mbox{ subject to }\;g_1(p_k,\xi)+\vartheta\in C_1.
\end{eqnarray*}
Then $0\in G_k(\xi,\bar\vartheta)$ and, with $\vartheta'\in C_1-(g_1(p_k,x)+y)$ such that $\norm{\vartheta'}={\rm dist}(g_1(p_k,x)+y;C_1)$, we get
\begin{eqnarray*}
\norm{\bar\vartheta-y}&\le&\norm{g_1(p_k,x)+y+\vartheta'-g_1(p_k,\xi)-y)}\le L\norm{\xi-x}+\norm{\vartheta'}.
\end{eqnarray*}
Thus it verifies the metric subregularity of $G_k$ at $((\xb_k,y_k),0)$ by
\begin{eqnarray*}
\norm{\xi-x}+\norm{\bar\vartheta-y}&\le&\rm dist\big(g_1(p_k,x)+y;C_1\big)+(L+1)\norm{\xi-x}\\
&\le&\big((L+1)\kappa_2+1\big)\big({\rm dist}\big(g_1(p_k,x)+y;C_1\big)+{\rm dist}\big(g_2(p_k,x);C_2\big)\big)\\
&\le&\sqrt{2}\big((L+1)\kappa_2+1\big){\rm dist}\big(0;G_k(x,y)\big).
\end{eqnarray*}

Since metric subregularity is a {\em constraint qualification} (MSCQ) for NLPs, we apply to problem \eqref{EqOptProb1} the well-recognized necessary optimality conditions via limiting normals at $(\xb_k,\oy_k)$ (cf.\ \cite{Mor06a,RoWe98}): there exist multipliers $\lambda^1_k\in N_{C_1}(g_1(p_k,\xb_k)+\oy_k)$ and $\lambda_k^2\in  N_{C_2}(g_2(p_k,\xb_k))$ such that
\begin{eqnarray}\label{EqAuxOptCond}
2\sigma_k(\xb_k-\Tilde x_k)+\nabla_x g_1(p_k,\xb_k)^*\lambda^1_k+\nabla_x g_2(p_k,\xb_k)^*\lambda^2_k=0\;\mbox{ and }\;\disp\frac{\oy_k}{\norm{\oy_k}}+\lambda^1_k=0.
\end{eqnarray}
Remembering by Theorem~\ref{ThEquMSP_BMP} that Robinson stability of \eqref{split} implies the partial BMP with respect to $x$ at the corresponding points, for each $k$ sufficiently large we can choose the multiplier $\lambda^2_k$ satisfying
\begin{eqnarray*}
\norm{\lambda^2_k}\le(\kappa_2+1)\big(\sigma_k\norm{\xb_k-\Tilde x_k}+\norm{\nabla_x g_1(p_k,\xb_k)^*\lambda^1_k}\big).
\end{eqnarray*}
It follows from \eqref{EqBndSigma} and from $\norm{\lambda^1_k}=1$ due to \eqref{EqAuxOptCond} that the sequence of $(\lambda^1_k,\lambda^2_k)$ is bounded and thus its subsequence converges to some $\lambda=(\lambda^1,\lambda^2)$ with $\norm{\lambda^1}=1$. By taking \eqref{ctan} into account, we conclude that $\lambda\in N_C(g(\pb,\xb);\bar v+\gxb \bar u)=\prod_{i=1}^2 N_{C_i}(g_i(\pb,\xb);\bar v_i+\nabla_xg_i(\pb,\xb)\bar u)$. Using finally \eqref{EqAuxOptCond} together with \eqref{EqBndSigma} tells us that
\begin{eqnarray*}
\nabla_x g(\pb,\xb)^*\lambda=\nabla_x g_1(\pb,\xb)^*\lambda^1+\nabla_x g_2(\pb,\xb)^*\lambda^2=0,
\end{eqnarray*}
which contradicts the assumed condition \eqref{EqPartFOReg} and thus completes the proof of the theorem.\end{proof}

Next we present several consequences of Theorem~\ref{ThPartFOReg} referring the reader to Sections~4 and 5 for further applications. Let us first formulate a version of the theorem without splitting system \eqref{EqConstrSystem} into two subsystems, i.e., with $l_2=0$ in \eqref{split}.

\begin{Corollary}[verification of Robinson stability without splitting]\label{CorPartFOREg} Let $l_2=0$ in the framework of Theorem~{\rm\ref{ThPartFOReg}}, i.e., in addition to condition \eqref{EqExistenceTangDir} the implication
\begin{eqnarray}\label{EqPartFORegCor}
\big[\lambda\in N_C\big(g(\pb,\xb);v+\nabla_xg(\pb,\xb)u\big),\;\nabla_x g(\pb,\xb)^*\lambda=0\big]\Longrightarrow\lambda=0
\end{eqnarray}
holds for every $(0,0)\not=(v,u)\in\ImDp\times\R^n$ with $v+\gxb u\in T_C(\gb)$. Then system \eqref{EqConstrSystem} enjoys the Robinson stability property at
$(\pb,\xb)$.
\end{Corollary}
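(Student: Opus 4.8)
The plan is to derive Corollary~\ref{CorPartFOREg} directly from Theorem~\ref{ThPartFOReg} by specializing the splitting \eqref{split} to the degenerate case $l_2=0$, in which the second subsystem $g_2(p,x)\in C_2$ is simply absent. First I would record that, with $l_2=0$, the component mapping $g_2$ takes values in the trivial space $\R^0=\{0\}$, the set $C_2$ reduces to the single point $0$, and therefore $\Gamma_2(p)=\{x\in\R^n\mid g_2(p,x)\in C_2\}=\R^n$ for every $p\in P$. Consequently ${\rm dist}\big(x;\Gamma_2(p)\big)=0={\rm dist}\big(g_2(p,x);C_2\big)$ for all $(p,x)$, so the Robinson stability hypothesis that Theorem~\ref{ThPartFOReg} imposes on the subsystem $g_2(p,x)\in C_2$ is met trivially (e.g., with modulus $\kappa_2=0$).

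Second, I would verify that assumption \eqref{EqPartFOReg} collapses exactly onto \eqref{EqPartFORegCor}. When $l_2=0$ we have $C=C_1$, $g=g_1$, and $v=v_1$ in \eqref{split}, while the product $\prod_{i=1}^2 N_{C_i}\big(g_i(\pb,\xb);v_i+\nabla_x g_i(\pb,\xb)u\big)$ retains only its first factor $N_C\big(\gb;v+\gxb u\big)$; hence a multiplier $\lambda=(\lambda^1,\lambda^2)$ is nothing but $\lambda=\lambda^1\in N_C\big(\gb;v+\gxb u\big)$, and the conclusion $\lambda^1=0$ of \eqref{EqPartFOReg} becomes $\lambda=0$. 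Thus \eqref{EqPartFOReg} is precisely the implication \eqref{EqPartFORegCor} required to hold for every $(0,0)\ne(v,u)\in\ImDp\times\R^n$ with $v+\gxb u\in T_C(\gb)$, which is the standing hypothesis of the corollary. The third hypothesis of Theorem~\ref{ThPartFOReg}, namely \eqref{EqExistenceTangDir}, is assumed verbatim in the corollary.

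With all three hypotheses of Theorem~\ref{ThPartFOReg} in force, the theorem yields Robinson stability of the whole system \eqref{split} at $(\pb,\xb)$; since that system is now \eqref{EqConstrSystem} itself, the conclusion follows. The only point demanding attention is the bookkeeping for the degenerate case $l_2=0$: confirming that the subsystem requirement is vacuously satisfied and that the empty second factor in the multiplier product is handled consistently, after which no additional argument is needed.
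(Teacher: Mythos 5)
Your proposal is correct and is exactly the argument the paper intends: the corollary is stated as the immediate specialization of Theorem~\ref{ThPartFOReg} to $l_2=0$, where the Robinson stability hypothesis on the (now vacuous) subsystem $g_2(p,x)\in C_2$ holds trivially and condition \eqref{EqPartFOReg} collapses to \eqref{EqPartFORegCor}. Your careful bookkeeping of the degenerate case ($\Gamma_2(p)=\R^n$, the empty second factor in the multiplier product) is precisely what the paper leaves implicit.
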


It is easy to check that all the assumptions of Corollary~\ref{CorPartFOREg} are satisfied under the metric regularity of the underlying mapping $x\mapsto g(\op,x)-C$ around the reference point $(\ox,0)$.

\begin{Corollary}[Robinson stability from metric regularity]\label{rob-mr} If the mapping $g(\pb,\cdot)-C$ is metrically regular around $(\xb,0)$, then Robinson stability holds for system \eqref{EqConstrSystem} at $(\pb,\xb)$.
\end{Corollary}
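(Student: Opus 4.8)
The plan is to deduce the corollary from Corollary~\ref{CorPartFOREg} (the non-splitting case $l_2=0$, where Robinson stability of the $g_2$-part is vacuous), by checking that metric regularity of $g(\pb,\cdot)-C$ around $(\xb,0)$ forces both of its hypotheses — the tangential condition \eqref{EqExistenceTangDir} and the dual implication \eqref{EqPartFORegCor} — for every relevant pair $(v,u)$ and for any admissible $\zeta$. The implication \eqref{EqPartFORegCor} is immediate: straight from the definitions \eqref{dnc} and \eqref{lnc} one has $N_C(\gb;w)\subseteq N_C(\gb)$ for every direction $w$, since the base points $\gb+t_kw_k$ realizing a directional normal converge to $\gb$ and hence realize an ordinary limiting normal; and by the Mordukhovich criterion, metric regularity of $g(\pb,\cdot)-C$ around $(\xb,0)$ is exactly the implication \eqref{mr-f} with $f=g(\pb,\cdot)$, namely $[\lambda\in N_C(\gb),\ \gxb^{*}\lambda=0]\Rightarrow\lambda=0$. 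Composing these two facts gives \eqref{EqPartFORegCor} for all $(v,u)$, with no further work.

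The substantive step is \eqref{EqExistenceTangDir}, and this is where the full strength of metric regularity — rather than mere subregularity — enters. Fix $v\in\ImDp$ and $t_k\dn 0$, and write $F:=g(\pb,\cdot)-C$. Metric regularity at $(\xb,0)$ provides $\kk>0$ and a neighborhood on which ${\rm dist}(x;F^{-1}(y))\le\kk\,{\rm dist}(y;F(x))$. Applying this at $x=\xb$ and $y=-t_kv$ (admissible for all large $k$) and using $\gb\in C$ yields ${\rm dist}\big(\xb;F^{-1}(-t_kv)\big)\le\kk\,{\rm dist}\big(\gb+t_kv;C\big)\le\kk t_k\norm{v}$, so we may pick $x_k$ with $g(\pb,x_k)+t_kv\in C$ and $\norm{x_k-\xb}=O(t_k)$. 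Passing to a subsequence, $(x_k-\xb)/t_k\to u$ for some $u\in\R^n$; a first-order expansion of $g(\pb,\cdot)$ at $\xb$ (legitimate since $\nabla_xg(\pb,\cdot)$ is continuous at $\xb$ by the standing assumptions) gives $g(\pb,x_k)=\gb+t_k\gxb u+\oo(t_k)$, whence $\gb+t_k(v+\gxb u)+\oo(t_k)\in C$ along the subsequence and therefore ${\rm dist}\big(\gb+t_k(v+\gxb u);C\big)/t_k\to 0$ along it. As these quantities are nonnegative, the $\liminf$ over the whole sequence equals $0$, which is \eqref{EqExistenceTangDir}; note the argument in fact uses nothing about $v$ beyond $v\in\R^l$. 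Corollary~\ref{CorPartFOREg} then yields Robinson stability of \eqref{EqConstrSystem} at $(\pb,\xb)$.

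I expect the one genuinely delicate point to be this last step, and specifically the temptation to do it with only metric subregularity in hand: subregularity bounds ${\rm dist}(x;\Gamma(\pb))$ by ${\rm dist}(g(\pb,x);C)$ but does not let one perturb the right-hand side of the inclusion $g(\pb,x)\in C$ by $-t_kv$ and still solve it near $\xb$ with an estimate linear in $t_k$ — and that extra freedom is exactly what manufactures the direction $u$. A secondary, routine subtlety is that the first-order expansion only controls a subsequence along which $(x_k-\xb)/t_k$ converges, but $\liminf$ rather than $\lim$ is all that \eqref{EqExistenceTangDir} demands, so this causes no trouble.
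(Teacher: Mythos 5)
Your proof is correct and follows essentially the same route as the paper: both reduce to Corollary~\ref{CorPartFOREg}, obtain \eqref{EqPartFORegCor} from the characterization \eqref{mr-f} via $N_C(\gb;w)\subset N_C(\gb)$, and manufacture the direction $u$ in \eqref{EqExistenceTangDir} as a limit of rescaled approximate solutions produced by the metric regularity estimate. The only cosmetic difference is that the paper first passes to the linearized mapping $u\mapsto\gb+\gxb u-C$ (which is metrically regular iff the original one is), so the inclusion $\gb+t_kv+\gxb u_k\in C$ holds exactly and no Taylor expansion is needed, whereas you work with the nonlinear mapping and absorb the expansion error into the $\liminf$; both are fine.
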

\begin{proof} Recall that $g(\pb,\cdot)-C$ is metrically regular around $(\xb,0)$ if and only if the mapping $u\mapsto\gb+\gxb u-C$ is metrically regular around $(0,0)$, cf.\ \cite[Corollary~3F.5]{DoRo14}. Then for any $v\in\R^l$ and any sequence $t_k\dn 0$ we can find $u_k$ with $\gb+t_k v+\gxb u_k\in C$ and $\norm{u_k}\le\kappa\,{\rm dist}(\gb+t_k v\;C)\le t_k\norm{v}$ whenever $k\in\N$ is sufficiently large. Hence the sequence $\{u_k/t_k\}$ is bounded and its subsequence $\{u_{k_i}/t_{k_i}\}$ converges to some $u\in\R^n$, and so
\begin{eqnarray*}
\liminf_{k\to\infty}\frac{{\rm dist}\big(\gb+t_k(v+\gxb u);C\big)}{t_k}&\le&\lim_{i\to\infty}\frac{{\rm dist}\big(\gb+t_{k_i}\big(v+\gxb \frac{u_{k_i}}{t_{k_i}}\big);C\big)}{t_{k_i}}=0.
\end{eqnarray*}
Thus assumption \eqref{EqExistenceTangDir} is satisfied in this setting. The validity of implication \eqref{EqPartFORegCor} follows immediately from the metric regularity characterization in \eqref{mr-f}.\end{proof}

To conclude this section, we discuss some important settings where the major assumptions of Theorem~\ref{ThPartFOReg} are satisfied without imposing metric regularity of the mapping $g(\op,\cdot)-C$.

\begin{Remark}[assumption verification]\label{RemExistenceTangDir}{\rm {\bf (i)} Let $C$ be a union of finitely many convex polyhedra $C_1,\ldots,C_m$, and let the triple $(z,u,v)$ fulfill the condition $v+\nabla\gb(z,u)\in T_C(\gb)$. Then there is $\bar t>0$ with
\begin{eqnarray*}
\gb+t\big(v+\nabla\gb(q,u)\big)\in C\;\mbox{ for all }\;t\in[0,\bar t];
\end{eqnarray*}
in particular, condition \eqref{EqExistenceTangDir} is satisfied for every sequence $t_k\dn 0$. Indeed, in this case any tangent direction $w\in T_C(\oy)$ belongs to the contingent cone of one of the sets $C_i$, and hence there exists $\bar t>0$ for which $\oy+tw\in C_i\subset C$ whenever $t\in[0,\bar t]$.

{\bf (ii)} If $C$ is convex, then the inclusion $(\lambda^1,\lambda^2)\in\prod_{i=1}^2N_{C_i}(g_i(\pb,\xb);v_i+\nabla_xg_i(\pb,\xb)u)$ appearing in \eqref{EqPartFOReg} is equivalent to the conditions:
\begin{eqnarray*}
v_i+\nabla_xg_i(\pb,\xb)u\in T_C\big(g_i(\pb,\xb)\big),\;\lambda_i\in N_C\big(g_i(\pb,\xb)\big),\;\skalp{\lambda_i,v_i+\nabla_xg_i(\pb,\xb)u}=0,\;i=1,2.
\end{eqnarray*}
This follows directly from \cite[Lemma~2.1]{Gfr14b}.}
\end{Remark}

\section{Second-Order Conditions for Robinson Stability and Subregularity}

This section is devoted to deriving verifiable {\em second-order} conditions for Robinson stability of PCS, which has never been done in the literature. Our results below take into account the curvatures of the constraint set $C$ and the parameter set $P$.

Given a closed subset $\Omega\subset\R^s$, a point $\bar z\in\Omega$, a direction $v\in T_\Omega(\bar z)$ and a multiplier $\lambda\in\R^s$, we introduce the following (directional) {\em upper curvature} and {\em lower curvature} of $\O$, respectively:
\begin{eqnarray}\label{EqCurvP}
&&\ochi_\Omega(\lambda,\bar z;v):=\lim_{\epsilon\dn 0}\sup\Big\{\frac{\skalp{\lambda,v'-v}}{\tau}\Big|\;0<\tau<\epsilon,\;\norm{v'-v}<\epsilon,\;\bar z+\tau v'\in\Omega\Big\},\\
\label{EqCurvC}
&&\uchi_\Omega(\lambda,\bar z;v):=\lim_{\epsilon\dn 0}\inf\Big\{\frac{\skalp{\lambda,v'-v}}{\tau}\Big|0<\tau<\epsilon,\norm{v'-v}<\epsilon,{\rm dist}\big(\lambda;N_\Omega(\bar z+\tau v')\big)<\epsilon\Big\}.\qquad
\end{eqnarray}
Observe that both $\ochi_\Omega(\lambda,\bar z;v)$ and $\uchi_\Omega(\lambda,\bar z;v)$ can have values $\pm\infty$ and that $\uchi_\Omega(\lambda,\bar z;v)=\infty$ if $\lambda\not\in N_\Omega(\bar z;v)$. Otherwise we clearly have the relationship $\ochi_\Omega(\lambda,\bar z;v)\geq\uchi_\Omega(\lambda,\bar z;v)$. Note also that some related while different curvature quantities were used in the literature for deriving second-order  optimality conditions in nonconvex problems of constrained optimization, see, e.g., \cite{ap06, BonSh00, Pen98}.

Recall \cite{BonSh00} that, given a closed set $\Omega\subset\R^s$, the {\em outer second-order tangent set} to $\O$ at $\bar z\in\Omega$ in direction $v\in T_\Omega(\bar z)$ is defined by
\begin{eqnarray*}
T_\Omega^2(\bar z;v):=\limsup_{\tau\dn 0}\frac{\Omega-\zb-\tau v}{\frac 12\tau^2}.
\end{eqnarray*}

\begin{Proposition}[upper curvature via second-order tangent set]\label{LemUpSecOrd} We have the relationship
\begin{eqnarray*}
\ochi_\Omega(\lambda,\bar z;v)\ge\frac12\sup\Big\{\la\lambda,w\ra\Big|\;w\in T_\Omega^2(\bar z;v)\Big\}.
\end{eqnarray*}
\end{Proposition}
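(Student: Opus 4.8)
The goal is to show $\ochi_\Omega(\lambda,\bar z;v)\ge\frac12\sup\{\la\lambda,w\ra\mid w\in T^2_\Omega(\bar z;v)\}$, where the right-hand side is interpreted as $-\infty$ when $T^2_\Omega(\bar z;v)=\emptyset$, in which case the inequality is trivial. So assume $T^2_\Omega(\bar z;v)\ne\emptyset$ and fix an arbitrary $w\in T^2_\Omega(\bar z;v)$; it suffices to prove $\ochi_\Omega(\lambda,\bar z;v)\ge\frac12\la\lambda,w\ra$.

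First I would unwind the definition of the outer second-order tangent set: $w\in T^2_\Omega(\bar z;v)$ means there are sequences $\tau_k\dn 0$ and $w_k\to w$ with $\bar z+\tau_k v+\frac12\tau_k^2 w_k\in\Omega$ for all $k$. The natural move is to package this as a point of the form $\bar z+\tau_k v'_k\in\Omega$ by setting $v'_k:=v+\frac12\tau_k w_k$, so that $v'_k\to v$ (since $\tau_k\to 0$ and $w_k$ is bounded). Then for every $\epsilon>0$, for all $k$ large enough we have $0<\tau_k<\epsilon$ and $\norm{v'_k-v}<\epsilon$, so $(\tau_k,v'_k)$ is admissible in the supremum defining $\ochi_\Omega(\lambda,\bar z;v)$ in \eqref{EqCurvP}. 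This forces
\begin{eqnarray*}
\ochi_\Omega(\lambda,\bar z;v)\ge\limsup_{k\to\infty}\frac{\skalp{\lambda,v'_k-v}}{\tau_k}=\limsup_{k\to\infty}\frac{\skalp{\lambda,\frac12\tau_k w_k}}{\tau_k}=\limsup_{k\to\infty}\frac12\skalp{\lambda,w_k}=\frac12\skalp{\lambda,w},
\end{eqnarray*}
where the last equality uses $w_k\to w$ and continuity of the inner product. Since $w\in T^2_\Omega(\bar z;v)$ was arbitrary, taking the supremum over all such $w$ yields the claimed inequality.

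There is essentially no hard part here — the statement is a direct bookkeeping consequence of matching the second-order tangent parametrization $\bar z+\tau v+\frac12\tau^2 w_k$ against the first-order-looking parametrization $\bar z+\tau v'$ used in \eqref{EqCurvP} via the substitution $v'=v+\frac12\tau w_k$. The only point deserving care is the order of quantifiers in the $\lim_{\epsilon\dn 0}\sup$ defining $\ochi_\Omega$: one must observe that for each fixed $\epsilon$ the tail of the sequence $(\tau_k,v'_k)$ lies in the feasible set, so the inner supremum over that $\epsilon$-window is $\ge$ the relevant limsup, and then let $\epsilon\dn 0$; monotonicity of the inner supremum in $\epsilon$ makes the outer limit well-defined and preserves the bound. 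I would also remark that, as noted right after \eqref{EqCurvC}, if $\lambda\notin N_\Omega(\bar z;v)$ then $\uchi_\Omega(\lambda,\bar z;v)=\infty$, but this plays no role in the present one-sided estimate for $\ochi_\Omega$; the inequality holds for every $\lambda\in\R^s$.
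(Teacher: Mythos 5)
Your proof is correct and follows essentially the same route as the paper: both unwind the Painlev\'e--Kuratowski definition of $T_\Omega^2(\bar z;v)$ into sequences $\tau_k\dn 0$, $w_k\to w$ with $\bar z+\tau_k(v+\tfrac12\tau_k w_k)\in\Omega$, feed the reparametrized directions $v'_k=v+\tfrac12\tau_k w_k$ into the supremum defining $\ochi_\Omega$, and pass to the limit. The only cosmetic difference is that you bound $\ochi_\Omega$ below by $\tfrac12\la\lambda,w\ra$ for each fixed $w$ and take the supremum at the end, whereas the paper selects a near-maximizing $w$ up front; both handle the empty and $+\infty$ cases properly.
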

\begin{proof} Letting $\eta:=\sup\{\skalp{\lambda,w}|\;w\in T_\Omega^2(\bar z;v)\}$, observe that $\eta=-\infty$ if $T_\Omega^2(\bar z;v)=\emp$, and thus the statement is trivial in this case. When $T_\Omega^2(\bar z;v)\ne\emp$, for an arbitrarily fixed $\delta>0$ consider $w\in T_\Omega^2(\bar z;v)$ satisfying $\skalp{\lambda,w}\ge\eta-\delta$ if $\eta<\infty$ and $\skalp{\lambda,w}\ge 1/\delta$ if $\eta=\infty$. Then we can find sequences $\tau_k\dn 0$ and $z_k\setto{\Omega}\bar z$ with $2(v_k-v)/\tau_k\to w$, where $v_k:=(z_k-\bar z)/\tau_k$. Since $v_k\to v$, for any $\epsilon>0$ there is $k\in\N$ such that $\tau_k<\epsilon$, $\norm{v_k-v}<\epsilon$, and $2\skalp{\lambda,(v_k-v)/\tau_k}\ge\skalp{\lambda,w}-\epsilon$. By $z_k=\zb+t_kv_k\in\Omega$ we conclude that
\begin{eqnarray*}
\disp\sup\Big\{\frac{\skalp{\lambda,v'-v}}{\tau}\Big|\;0<\tau<\epsilon,\;\norm{v'-v}\le\epsilon,\;\bar z+\tau v'\in\Omega\}\ge\skalp{\lambda,(v_k-v)/\tau_k}\ge\frac 12(\skalp{\lambda,w}-\epsilon),
\end{eqnarray*}
which therefore completes the proof of the proposition. \end{proof}

The next important result provides explicit evaluations for the upper and lower curvatures of sets under a {\em certain subamenability}. In fact, the first statement of the following theorem holds for {\em strongly subamenable} sets from Definition~\ref{subamen}, while the second statement covers {\em fully subamenable} sets if the image set $Q$ in the representation below is just one convex polyhedron (instead of their finite unions).

\begin{Theorem}[upper and lower curvatures of set under subamenability]\label{PropSetSecOrd} Let $\Omega:=\{z\in\R^s|\;q(z)\in Q\}$, where $q:\R^s\to\R^p$ is twice differentiable at $\bar z\in\Omega$, $Q\subset\R^p$ is a closed set, and where the mapping $q(\cdot)-Q$ is metrically subregular at $(\bar z,0)$. Given $v\in T_{\Omega}(\bar z)$, we have the assertions:

{\bf (i)} If $Q$ is convex, $\lambda\in N_\Omega(\bar z)$, and $\skalp{\lambda,v}=0$, then
\begin{eqnarray}\label{EqUpSecOrd}
\ochi_\Omega(\lambda,\bar z;v)\le-\disp\frac 12\sup\Big\{\skalp{\nabla^2\skalp{\mu,q}(\bar z)v,v}\Big|\;\mu\in N_Q\big(q(\bar z)\big),\;\lambda=\nabla q(\bar z)^*\mu\Big\}.
\end{eqnarray}

{\bf (ii)} If $\lambda\in N_\Omega(\bar z;v)$ and $Q$ is the union of finitely many convex polyhedra, then there is a vector $\mu\in N_Q\big(q(\bar z);\nabla q(\bar z)v\big)$ such that $\lambda=\nabla q(\bar z)^*\mu$ and that
\begin{eqnarray}\label{EqLowSecOrd}
\uchi_\Omega(\lambda,\bar z;v)=-\disp\frac 12\skalp{\nabla^2\skalp{\mu,q}(\bar z)v,v}.
\end{eqnarray}
\end{Theorem}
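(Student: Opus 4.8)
The strategy is to reduce both curvature bounds to the metric-subregularity-based calculus available from Lemma~\ref{CorBMP}, combined with the classical second-order Taylor expansion of $q$ at $\bar z$. The key observation is that metric subregularity of $q(\cdot)-Q$ at $(\bar z,0)$ yields, for each point $z$ near $\bar z$ in $\Omega$, a quantitative estimate of the form $\dist(z;\Omega)\le\kappa_C\,\dist(q(z);Q)$, and — via Lemma~\ref{CorBMP} applied along directional sequences — the representation of (directional) limiting normals to $\Omega$ as pullbacks $\nabla q(\bar z)^*\mu$ of normals to $Q$ with a norm bound $\|\mu\|\le\kappa_C\|\lambda\|$. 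This bounded-multiplier feature is what makes the suprema in \eqref{EqUpSecOrd} and the identity \eqref{EqLowSecOrd} well-defined and attained.

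\textbf{Part (i).} For the upper curvature bound, fix $\lambda\in N_\Omega(\bar z)$ with $\skalp{\lambda,v}=0$ and take any admissible triple $(\tau,v')$ in the definition \eqref{EqCurvP}, i.e. $\bar z+\tau v'\in\Omega$, so $q(\bar z+\tau v')\in Q$. Writing the second-order Taylor expansion $q(\bar z+\tau v')=q(\bar z)+\tau\nabla q(\bar z)v'+\frac12\tau^2\nabla^2 q(\bar z)(v',v')+o(\tau^2)$ and testing against any $\mu\in N_Q(q(\bar z))$ with $\lambda=\nabla q(\bar z)^*\mu$, the convexity of $Q$ gives $\skalp{\mu,q(\bar z+\tau v')-q(\bar z)}\le 0$; rearranging and using $\skalp{\mu,\nabla q(\bar z)v'}=\skalp{\lambda,v'}=\skalp{\lambda,v'-v}$ (since $\skalp{\lambda,v}=0$) produces
\begin{eqnarray*}
\skalp{\lambda,v'-v}\le-\frac12\tau\skalp{\nabla^2\skalp{\mu,q}(\bar z)(v',v')}+\frac{o(\tau^2)}{\tau}.
\end{eqnarray*}
Dividing by $\tau$, passing to the limit as $\epsilon\dn 0$ (so $\tau\dn 0$, $v'\to v$), and then taking the supremum over admissible $\mu$ yields \eqref{EqUpSecOrd}. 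The one subtlety is that this argument requires the existence of at least one such $\mu$; that is exactly what Lemma~\ref{CorBMP} (or rather \eqref{mr-f}/the subregularity normal-cone inclusion) guarantees under the standing metric subregularity hypothesis, and if no such $\mu$ exists the supremum over the empty set is $-\infty$ and the inequality is vacuous.

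\textbf{Part (ii).} For the lower curvature, the new ingredient is the \emph{exactness} of the polyhedral pullback. Fix $\lambda\in N_\Omega(\bar z;v)$. First, applying the directional version of Lemma~\ref{CorBMP} (i.e. inclusion \eqref{inv-im} of \cite[Proposition~4.1]{GfrOut15}) at the directional normal gives some $\mu\in N_Q(q(\bar z);\nabla q(\bar z)v)$ with $\lambda=\nabla q(\bar z)^*\mu$. Since $Q$ is a union of finitely many convex polyhedra, $N_Q(q(\bar z);\nabla q(\bar z)v)$ is itself polyhedral and locally ``rigid'': near $q(\bar z)$ along the direction $\nabla q(\bar z)v$, the set $Q$ locally coincides with a single polyhedron $Q'$ whose normal cone along that direction is constant, so one can arrange (by a standard polyhedral reduction) that the sequences realizing the lower curvature stay in this one piece. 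This is the step I expect to be the \emph{main obstacle}: one must carefully pass between the directional limiting normal realized by sequences $z_k=\bar z+\tau_k v_k'\in\Omega$ with $\dist(\lambda;N_\Omega(z_k))\to 0$ and the polyhedral structure of $Q$, using the metric subregularity to lift the condition $\dist(\lambda;N_\Omega(z_k))<\epsilon$ to a corresponding approximate-normality statement $\dist(\mu;N_Q(q(z_k)))$ small with a uniformly bounded multiplier, and then exploiting that for polyhedral $Q'$ the affine-hull/face structure forces $\skalp{\mu,q(z_k)-q(\bar z)}=0$ along the relevant sequences. Once this localization is in place, the second-order Taylor expansion of $q$ gives both inequalities: the ``$\le$'' direction follows as in Part (i) (now the infimum instead of the supremum, and with a \emph{single} $\mu$, so we get equality rather than a one-sided bound on the quadratic form), and the matching ``$\ge$'' direction follows by choosing, for each small $\tau$, a feasible $v_\tau'$ realizing $\bar z+\tau v_\tau'\in\Omega$ with $q(\bar z+\tau v_\tau')-q(\bar z)-\tau\nabla q(\bar z)v$ of order $\tau^2$ — obtained from metric subregularity applied to a suitable second-order perturbation of the constraint — so that the quadratic term $-\frac12\skalp{\nabla^2\skalp{\mu,q}(\bar z)v,v}$ is exactly attained in the limit. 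Combining the two inequalities yields the identity \eqref{EqLowSecOrd}.
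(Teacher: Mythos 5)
Your part (i) is correct and is essentially the paper's own argument: the second-order Taylor expansion of $q$, the convexity inequality $\skalp{\mu,q(\bar z+\tau v')-q(\bar z)}\le 0$, the identity $\skalp{\lambda,v'-v}=\skalp{\mu,\nabla q(\bar z)v'}$ coming from $\skalp{\lambda,v}=0$, and then a supremum over the admissible $\mu$ (the empty case makes the right-hand side $+\infty$, so there is nothing to prove).

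Part (ii) is where the genuine gap lies; you have correctly located it but not closed it, and the route you sketch for closing it would not work as stated. First, the order of quantifiers matters: you propose to obtain $\mu$ from the directional version of Lemma~\ref{CorBMP} and then verify \eqref{EqLowSecOrd}, but distinct $\mu,\mu'\in N_Q(q(\bar z);\nabla q(\bar z)v)$ with $\nabla q(\bar z)^*\mu=\nabla q(\bar z)^*\mu'=\lambda$ generally give different quadratic forms, while $\uchi_\Omega(\lambda,\bar z;v)$ is a single number; the correct $\mu$ must be manufactured as the limit of multipliers $\mu_k\in N_Q(q(\bar z+\tau_kv_k))\cap\kappa\|\lambda_k\|\B_{\R^p}$ obtained by applying Lemma~\ref{CorBMP} at the points $\bar z+\tau_kv_k$ of a sequence that realizes the infimum defining $\uchi$. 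Second, your plan for the ``$\ge$'' half is logically inverted: exhibiting particular feasible points $\bar z+\tau v_\tau'$ can only bound the infimum in \eqref{EqCurvC} from above, never from below; a lower bound requires an estimate valid for every admissible pair, which is exactly what the exact orthogonality $\skalp{\mu,\cdot}=0$ is needed for. The paper avoids the two-inequality split altogether: it evaluates the limit of $\skalp{\lambda,v_k-v}/\tau_k$ along the realizing sequence and shows it equals $-\frac 12\skalp{\nabla^2\skalp{\mu,q}(\bar z)v,v}$, giving the equality in one stroke. Finally, your localization claim (``$Q$ locally coincides with a single polyhedron along the direction'') is not justified for a union of polyhedra; what is actually needed is to replace $q(\bar z+\tau_kv_k)$ by points $d_k\in Q$ with $\|d_k-q(\bar z+\tau_kv_k)\|\le\tau_k^3$ and $\mu\in\Hat N_Q(d_k)$ (via \cite[Lemma~3.4]{Gfr13b}), pass to a subsequence along which a single polyhedron $Q_{\hat i}$ contains every $d_k$ with a constant active index set, and combine $\mu\in N_{Q_{\hat i}}(d_k)$ with $\mu\in N_{Q_{\hat i}}(q(\bar z))$ to conclude $\skalp{\mu,q(\bar z)-d_k}=0$ exactly; the Taylor expansion then finishes. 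None of these steps is carried out in your sketch.
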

\begin{proof}
To verify (i), consider sequences $\tau_k\dn 0$ and $v_k\to v$ with $\bar z+\tau_k v_k\in\Omega$, $k\in\N$, such that $\skalp{\lambda,v_k-v}/\tau_k\to\ochi_\Omega(\lambda,\bar z;v)$ and take $\mu\in N_Q(q(\bar z))$ with $\nabla q(\bar z)^*\mu=\lambda$. Then $q(\bar z+\tau_k v_k)=q(\bar z)+\tau_k\nabla q(\bar z)v_k+\frac 12\tau_k^2\skalp{\nabla^2 q(\bar z)v,v}+\oo(\tau_k^2)$ and $\skalp{\mu,q(\bar z+\tau_kv_k)-q(\bar z)}\le 0$ by the convexity of $Q$ thus yielding
\begin{eqnarray*}
\disp\frac{\skalp{\lambda, v_k-v}}{\tau_k}=\frac{\skalp{\mu,\nabla q(\xb)v_k}}{\tau_k}\le-\frac 12\skalp{\nabla^2\skalp{\mu,q}(\bar z)v,v}+\frac{\oo(\tau_k^2)}{\tau_k^2}.
\end{eqnarray*}
This readily justifies \eqref{EqUpSecOrd} by definition \eqref{EqCurvC} of the upper curvature.

To proceed with the verification of (ii), take sequences $\tau_k\dn 0$, $v_k\to v$ and $\lambda_k\to\lambda$ as $k\to\infty$ with $\lambda_k\in N_\Omega(\bar z+\tau_k v_k)$ for all $k\in\N$ such that $\skalp{\lambda, v_k-v}/\tau_k\to\uchi_\Omega(\lambda,\bar z;v)$. It follows from Lemma~\ref{CorBMP} by the assumed metric subregularity that there is $\kappa>0$ such that for each $k$ sufficiently large we can find $\mu_k\in N_Q(q(\bar z+\tau_k v_k))\cap\kappa\norm{\lambda_k}\B_{\R^{p}}$ with $\lambda_k=\nabla q(\bar z+\tau_kv_k)^*\mu_k$. Hence the sequence of $\mu_k$ is bounded and its subsequence converges to some $\mu\in N_Q(q(\bar z);\nabla q(\bar z)v)$ satisfying $\lambda=\nabla q(\bar z)^*\mu$. Then \cite[Lemma~3.4]{Gfr13b} allows us to find $d_k\in Q$ with $\norm{d_k-q(\bar z+\tau_kv_k)}\le\tau_k^3$ such that $\mu\in\Hat N_Q(d_k)$.

Remembering that $Q$ is the union of the convex polyhedra $Q_1,\ldots,Q_m$ having the representations $Q_i=\{d\in\R^p|\;\skalp{a_{ij},d}\le b_{ij},\ j=1,\ldots,m_i\}$ for $i=1,\ldots,m$, we get
$$
\Hat N_Q(d_k)=\bigcap_{i\in J_k}\Hat N_{Q_i}(d_k)\;\mbox{ with }\;J_k:=\big\{i\in\{1,\ldots,m\}\big|\;d_k\in Q_i\big\}.
$$
Since $J_k\ne\emp$ for each $k\in\N$, there is a subsequence $\{k\}$ and some index $\Hat i$ such that $\Hat i\in J_k$ along this subsequence. By passing to a subsequence again (no relabeling), we can suppose that the index sets ${\cal J}_{\Hat i}(d_k):=\{j|\;\skalp{a_{{\Hat i}j},d}= b_{\Hat ij}\}$ reduces to a constant set ${\cal J}$ for all $k\in\N$. Employing now the Generalized Farkas Lemma from \cite[Proposition~2.201]{BonSh00} gives  us a constant $\beta\ge 0$ such that for every $k$ there are numbers $\nu_{kj}\ge 0$ as $j\in{\cal J}$ for which
$$
\mu_k=\disp\sum_{j\in{\cal J}}a_{{\Hat i}j}\nu_{kj}\;\mbox{ and }\;\sum_{j\in{\cal J}}\nu_{kj}\le\beta\norm{\mu_k}.
$$
Thus the sequences $\{\nu_{kj}\}$ for all $j\in{\cal J}$ are bounded, and the passage to a subsequence tells us that for every $j\in{\cal J}$ the sequence of $\nu_{kj}$ converges to some $\nu_j$ as $k\to\infty$. Then $\nu_j\ge 0$, $\mu=\sum_{j\in{\cal J}}a_{{\Hat i}j}\nu_j$, and $\mu\in N_{Q_{\Hat i}}(d_k)$ for all $k$. Hence $\skalp{\mu,q(\bar z)-d_k}\le 0$ and, since we also have $\mu\in N_{Q_{\Hat i}}(q(\bar z))$, it follows that $\skalp{\mu,q(\bar z)-d_k}=0$. Furthermore, taking into account the representations
$$
d_k-q(\bar z)=q(\bar z+\tau_k v_k)-q(\bar z)+\oo(\tau_k^2)=\tau_k\nabla q(\bar z)v_k+\disp\frac 12\tau_k^2\skalp{\nabla^2 q(\bar z)v,v}+\oo(\tau_k^2)
$$
allows us finally to arrive at the relationships
\begin{eqnarray*}
\skalp{\lambda,v}=\skalp{\mu,\nabla q(\bar z)v}=\disp\lim_{k\to\infty}\frac{\skalp{\mu,d_k-q(\bar z)}}{\tau_k}=0,
\end{eqnarray*}
\begin{eqnarray*}
\disp\frac{\skalp{\lambda, v_k-v}}{\tau_k}=\frac{\skalp{\mu,\nabla q(\zb)v_k}}{\tau_k}=-\frac 12\skalp{\nabla^2\skalp{\mu,q}(\bar z)v,v}+\frac{\oo(\tau_k^2)}{\tau_k^2},
\end{eqnarray*}
which complete the proof of the theorem by recalling definition \eqref{EqCurvC} of the lower curvature.\end{proof}

The next theorem is the major result of this section. For simplicity we restrict ourselves to the case where the parameter space $P$ is finite-dimensional.

\begin{Theorem}[second-order verification of Robinson stability]\label{ThPartSOReg} Consider the splitting system \eqref{split}, where $P\subset\R^m$ in our standing assumptions. Suppose also that for every $w\in T_P(\bar p)$ and for every sequence ${\tau_k}\dn 0$ there exists $u\in\R^n$ with
\begin{eqnarray}\label{EqTangDir}
\disp\liminf_{k\to\infty}{\rm dist}\big(\gb+\tau_k\nabla\gb(w,u);C\big)/\tau_k=0,
\end{eqnarray}
that Robinson stability holds at $(\pb,\xb)$ for the system $g_2(p,x)\in C_2$ in \eqref{split}, and that for every triple $(w,u,\lambda)\in\R^m\times\R^n\times\R^l$ satisfying the conditions
\begin{eqnarray}
\label{EqSOCond1}&&(0,0)\not=(w,u),\;w\in T_P(\pb),\;\nabla\gb(w,u)\in T_C\big(\gb\big),\\
\label{EqSOCond2}&&\lambda\in N_C\big(\gb;\nabla\gb(w,u)\big),\;\nabla_x g(\pb,\xb)^*\lambda=0,\\
\label{EqSOCond3}&&\frac 12\skalp{\nabla^2\skalp{\lambda,g}(\pb,\xb)(w,u),(w,u)}+\ochi_P\big(\gpb^*\lambda,\pb;w\big)\ge\uchi_C\big(\lambda,\gb;\nabla\gb(w,u)\big)\qquad
\end{eqnarray}
we have $\lambda^1=0$, where $\lambda=(\lambda^1,\lambda^2)\in\R^{l_1}\times\R^{l_2}$. Then the Robinson stability property at $(\op,\ox)$ also holds for the whole system $g(p,x)\in C$ in \eqref{split}.
\end{Theorem}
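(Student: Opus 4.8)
The plan is to argue by contradiction along the same lines as the proof of Theorem~\ref{ThPartFOReg}, but refining the first-order blow-up to catch second-order information. Assuming Robinson stability fails for \eqref{split} at $(\op,\ox)$, I would construct, exactly as before, sequences $p_k\to\pb$, $x_k\to\xb$, auxiliary points $\Tilde x_k\in\Gamma_2(p_k)$ produced from the RS property of $g_2(p,x)\in C_2$, scaling parameters $\sigma_k$ as in \eqref{sigma}, and optimal solutions $(\xb_k,\oy_k)$ of the penalized problem \eqref{EqOptProb1}. The metric subregularity of the constraint mapping $G_k$ at $((\xb_k,\oy_k),0)$ is verified verbatim, so the necessary optimality conditions \eqref{EqAuxOptCond} furnish multipliers $\lambda^1_k\in N_{C_1}(g_1(p_k,\xb_k)+\oy_k)$ and $\lambda^2_k\in N_{C_2}(g_2(p_k,\xb_k))$ with $\|\lambda^1_k\|=1$; the partial BMP (available by Theorem~\ref{ThEquMSP_BMP}, since RS holds for the $g_2$-subsystem) bounds $\lambda^2_k$, so after passing to a subsequence $(\lambda^1_k,\lambda^2_k)\to\lambda=(\lambda^1,\lambda^2)$ with $\|\lambda^1\|=1$. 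Setting $t_k:=\norm{\xb_k-\xb}+\norm{p_k-\pb}$ (now legitimate since $P\subset\R^m$) and passing again to a subsequence, $(\xb_k-\xb,p_k-\pb)/t_k\to(\bar u,\bar w)\ne(0,0)$ with $\bar w\in T_P(\pb)$; the estimate \eqref{EqDirApprox} (with $\bar v$ replaced by $\gpb\bar w$) together with \eqref{EqAuxTangDir} — whose proof now invokes \eqref{EqTangDir} in place of \eqref{EqExistenceTangDir} — yields $\nabla\gb(\bar w,\bar u)\in T_C(\gb)$, and passing to the limit in \eqref{EqAuxOptCond} gives $\nabla_x g(\pb,\xb)^*\lambda=0$ and $\lambda\in N_C(\gb;\nabla\gb(\bar w,\bar u))$. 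Thus \eqref{EqSOCond1} and \eqref{EqSOCond2} hold with $\lambda^1\ne0$.

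\textbf{The second-order inequality.} The real work is to show that the limiting triple $(\bar w,\bar u,\lambda)$ also satisfies \eqref{EqSOCond3}, which then contradicts the hypothesis. The idea is to extract a quantitative second-order estimate from optimality of $(\xb_k,\oy_k)$ in \eqref{EqOptProb1}. Writing $g(p_k,\xb_k)+(\oy_k,0)\in C$ and expanding $g$ to second order in $(p,x)$ about $(\pb,\xb)$ along the directions $(p_k-\pb,\xb_k-\xb)$, one sees that the point $\gb+t_k\nabla\gb(\bar w,\bar u)$ is perturbed into $C$ by a term of order $\tfrac12 t_k^2\langle\nabla^2\langle\lambda,g\rangle(\pb,\xb)(\bar w,\bar u),(\bar w,\bar u)\rangle$ (after pairing with $\lambda$), modified by the $\oy_k$-correction; since $\|\oy_k\|/t_k\to0$ but $\|\oy_k\|/t_k^2$ need not vanish, this is where the penalization term $\sigma_k\|x-\Tilde x_k\|^2$ enters. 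The plan is: (1) use $g_1(p_k,\xb_k)+\oy_k\in C_1$ and the curvature bound from Theorem~\ref{PropSetSecOrd}(ii) (applicable to $C$ via the subamenability hidden in \eqref{EqSOCond3}'s lower-curvature term $\uchi_C$) to lower-bound $\langle\lambda,(\oy_k,0)\rangle/t_k^2$ in terms of $\uchi_C$; (2) use feasibility of comparison points $\xb+t_k u'$ (for $u'$ supplied by \eqref{EqTangDir} and the direction realizing the optimality gap) and the optimality inequality $\phi_k(\xb_k,\oy_k)\le\phi_k(\xb+t_ku',\cdot)$ to bound the $\oy_k$-term from above by $\sigma_k t_k^2$ plus curvature of $P$ at $\pb$ in direction $\bar w$, which is $\ochi_P(\gpb^*\lambda,\pb;\bar w)$; (3) track the $\sigma_k\|\xb_k-\Tilde x_k\|^2$ contribution, which by \eqref{EqBndSigma} is controlled and feeds into the $\ochi_P$ / second-order-of-$g$ side. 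Combining (1)--(3) and letting $k\to\infty$ should deliver precisely the reversed form of \eqref{EqSOCond3}, i.e. $\tfrac12\langle\nabla^2\langle\lambda,g\rangle(w,u),(w,u)\rangle+\ochi_P(\gpb^*\lambda,\pb;\bar w)\ge\uchi_C(\lambda,\gb;\nabla\gb(\bar w,\bar u))$, contradicting $\lambda^1\ne0$.

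\textbf{Main obstacle.} The hard part will be step (2) above: identifying the right competitor points and bookkeeping the interplay between the three quantities $\sigma_k\|x-\Tilde x_k\|^2$, $\|\oy_k\|$, and the second-order expansions of $g$ and of the sets $C$, $P$, so that the limiting inequality comes out with exactly the constant $\tfrac12$ and with $\ochi_P$ (not $\uchi_P$) on the parameter side and $\uchi_C$ (not $\ochi_C$) on the constraint side — this asymmetry is dictated by which set is being "pushed into" and which is being "realized along a sequence of normals". A delicate point is that $\bar u$ need not be the $u$ furnished by \eqref{EqTangDir}; one must argue that the optimality of $(\xb_k,\oy_k)$ together with the boundedness estimates forces $\xb_k-\xb$ to align, to second order, with whatever direction minimizes the residual, and that the resulting $\bar u$ is admissible in \eqref{EqSOCond1}--\eqref{EqSOCond3}. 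Establishing $\uchi_C(\lambda,\gb;\nabla\gb(\bar w,\bar u))<\infty$ along the way (equivalently $\lambda\in N_C(\gb;\nabla\gb(\bar w,\bar u))$, already secured) and correctly handling the $\oo(\tau_k^2)$ remainders uniformly will require care but should be routine once the competitor construction is fixed.
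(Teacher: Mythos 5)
Your first-order scaffolding is right and matches the paper: the contradiction setup, the sequences $(p_k,\xb_k,\oy_k)$, the multipliers from \eqref{EqAuxOptCond} with $\|\lambda^1_k\|=1$, the BMP bound on $\lambda^2_k$, the normalization $\tau_k=\|p_k-\pb\|+\|\xb_k-\xb\|$, and the verification of \eqref{EqSOCond1}--\eqref{EqSOCond2} for the limit triple $(\tilde w,\tilde u,\lambda)$ are exactly what the paper does (it literally reruns the proof of Theorem~\ref{ThPartFOReg} with $\zeta(p)=\|p-\pb\|$). Your worry that $\tilde u$ must relate to the $u$ furnished by \eqref{EqTangDir} is unfounded: $\tilde u$ is simply the limit of $(\xb_k-\xb)/\tau_k$, and \eqref{EqTangDir} is used only to repeat the argument showing $\|\oy_k\|/\tau_k\to0$; no alignment argument is needed.

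The gap is in your plan for \eqref{EqSOCond3}, and both of its load-bearing steps are wrong as stated. First, Theorem~\ref{PropSetSecOrd}(ii) is not available here: it requires the image set to be a union of convex polyhedra, whereas $C$ in Theorem~\ref{ThPartSOReg} is an arbitrary closed set and no subamenability is assumed (none is ``hidden'' in the symbol $\uchi_C$; the lower curvature \eqref{EqCurvC} is defined for any closed set). The correct source of the $\uchi_C$ lower bound is the definition \eqref{EqCurvC} itself, applied to the witnesses $c_k:=g(p_k,\xb_k)+(\oy_k,0)=\gb+\tau_k s_k\in C$ with $s_k:=(c_k-\gb)/\tau_k$, $\lambda_k\in N_C(c_k)$ and $\lambda_k\to\lambda$: these pairs are admissible in the infimum defining $\uchi_C(\lambda,\gb;\nabla\gb(\tilde w,\tilde u))$ for every $\epsilon>0$ once $k$ is large, whence $\skalp{\lambda,s_k-\nabla\gb(\tilde w,\tilde u)}/\tau_k\ge\uchi_C(\lambda,\gb;\nabla\gb(\tilde w,\tilde u))-1/k$. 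The $\oy_k$-term is then discarded not by a curvature estimate but by the sign observation $\skalp{\lambda,(\oy_k,0)}=\|\oy_k\|\skalp{\lambda^1,\oy_k/\|\oy_k\|}\le 0$ for large $k$, which follows from $\lambda^1_k=-\oy_k/\|\oy_k\|$ and $\|\lambda^1\|=1$. Second, $\ochi_P$ does not come from competitor points in \eqref{EqOptProb1}: it appears because the second-order Taylor expansion of $\skalp{\lambda,g(p_k,\xb_k)-\gb}$ leaves the linear remainder $\skalp{\lambda,\nabla\gb(p_k-\pb-\tau_k\tilde w,\xb_k-\xb-\tau_k\tilde u)}/\tau_k^2$, whose $x$-part vanishes because $\gxb^*\lambda=0$ and whose $p$-part equals $\skalp{\gpb^*\lambda,w_k-\tilde w}/\tau_k$ with $w_k=(p_k-\pb)/\tau_k$ and $\pb+\tau_k w_k=p_k\in P$; by the definition \eqref{EqCurvP} of the upper curvature as a supremum this is at most $\ochi_P(\gpb^*\lambda,\pb;\tilde w)+1/k$. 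Chaining these three displays and letting $k\to\infty$ yields \eqref{EqSOCond3}; the penalty term $\sigma_k\|\xb_k-\tilde x_k\|^2$ plays no role in the second-order inequality (it only enters the first-order multiplier bounds via \eqref{EqBndSigma}). Without these corrections your step (1) has no valid justification for general $C$ and your step (2) would not produce the $\ochi_P$ term.
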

\begin{proof}
Assuming on the contrary that Robinson stability fails at $(\op,\ox)$ for the system $g(p,x)\in C$ in \eqref{split} and taking $\zeta(p):=\norm{p-\pb}$, we proceed as in the proof of Theorem~\ref{ThPartFOReg} and find sequences $\xb_k$, $y_k$, and $\lambda_k:=(\lambda^1_k,\lambda^2_k)$ such that the limit $\lambda=(\lambda^1,\lambda^2)=\lim_{k\to\infty} \lambda_k$ satisfies the relationships
\begin{eqnarray*}
\lambda_k\in N_C(c_k),\;\lambda^1_k=-\disp\frac{y_k}{\norm{y_k}},\;\gxb^*\lambda=0,\;\norm{\lambda^1}=1,
\end{eqnarray*}
where $c_k:=g(p_k,\xb_k)+(y_k,0)\in C$. For each $k\in\N$ choosing $R_k$ as above, define $0<\tau_k:=\norm{p_k-\pb}+\norm{\xb_k-\xb}\le\frac 1k+\frac 34 R_k<\frac 2k$ and suppose by passing to a subsequence that the sequence $\{(p_k-\pb,\xb_k-\xb)/\tau_k\}$ converges to some $\Tilde z:=(\Tilde w,\Tilde u)\not=(0,0)$. Let us show that the triple $(\Tilde w,\Tilde u,\lambda)$ satisfies conditions \eqref{EqSOCond1}--\eqref{EqSOCond3}, which contradicts the assumption of the theorem due to $\lambda^1\ne 0$.

We obviously have $\Tilde w\in T_P(\pb)$ and $\lim_{k\to\infty}(g(p_k,\xb)-\gb)/\tau_k=\gpb\Tilde w=:\Tilde v$ together with
\begin{eqnarray*}
\disp\lim_{k\to\infty}\frac{g(p_k,\xb_k)-\gb}{\tau_k}=\nabla\gb\Tilde z=\Tilde v+\gxb\Tilde u.
\end{eqnarray*}
Now we proceed as in the proof of Theorem~\ref{ThPartFOReg} to verify \eqref{EqAuxTangDir}. Using the same arguments as in Theorem~\ref{ThPartFOReg} with replacing  $\bar v$, $\bar u$, and $t_k$ by $\Tilde v,\Tilde u$, and $\tau_k$, respectively, implies that $y_k/\tau_k\to 0$. Further, by setting $s_k:=(c_k-\gb)/\tau_k$ we obtain
\begin{eqnarray*}
\disp\lim_{k\to\infty}s_k=\lim_{k\to\infty}\left(\frac{g(p_k,\xb_k)-\gb}{\tau_k}+\frac{(y_k,0)}{\tau_k}\right)=\nabla \gb\Tilde z,
\end{eqnarray*}
which gives us $\nabla\gb\Tilde z\in T_C(\gb)$ and $\lambda\in N_C(\gb;\nabla\gb\Tilde z)$. Hence \eqref{EqSOCond1} and \eqref{EqSOCond2} are fulfilled, and it remains to justify \eqref{EqSOCond3}. By passing to a subsequence, suppose the validity of
\begin{eqnarray}\label{EqAuxSOBnd1}
\disp\frac{\skalp{\lambda,c_k-\gb-\tau_k\nabla\gb\Tilde z}}{\tau_k^2}=\frac{\skalp{\lambda,s_k-\nabla\gb\Tilde z}}{\tau_k}\ge\uchi_C\big(\lambda,\gb;\nabla\gb\Tilde z\big)-
\disp\frac 1k
\end{eqnarray}
for all $k\in\N$. By $-1=\skalp{\lambda^1_k,y_k/\norm{y_k}}$ we also get
\begin{eqnarray*}
-\disp\frac 12\ge\skalp{\lambda^1,y_k/\norm{y_k}}=\frac{\skalp{\lambda,c_k-g(p_k,\xb_k)}}{\norm{y_k}}\;\mbox{ and }\;\skalp{\lambda,c_k-g(p_k,\xb_k)}\le 0
\end{eqnarray*}
when $k$ is sufficiently large. Hence \eqref{EqAuxSOBnd1} yields the estimate
\begin{eqnarray*}
\disp\frac{\skalp{\lambda,g(p_k,\xb_k)-\gb-\tau_k\nabla\gb\Tilde z}}{\tau_k^2}\ge\uchi_C\big(\lambda,\gb;\nabla\gb\Tilde z\Big)-\frac 1k,
\end{eqnarray*}
which implies, by passing to a subsequence if necessary, that
\begin{eqnarray*}
\disp\frac 12\skalp{\nabla^2\skalp{\lambda,g}(\pb,\xb)\Tilde z,\Tilde z}+\frac{\skalp{\lambda,\nabla\gb(\xb_k-\xb-\tau_k\Tilde u,p_k-\pb-\tau_k\Tilde w)}}{\tau_k^2}\ge\disp\uchi_C\big(\lambda,\gb;\nabla\gb\Tilde z\big)-\frac 2k
\end{eqnarray*}
for all $k$. Setting $w_k:=(p_k-\pb)/\tau_k$ as $k\in\N$, we have
\begin{eqnarray*}
\frac{\skalp{\lambda,\gpb(p_k-\pb-\tau_k\Tilde w)}}{\tau_k^2}&=&\frac{\skalp{\lambda,\gpb(w_k-\Tilde w)}}{\tau_k}=\frac{\skalp{\gpb^*\lambda,w_k-\Tilde w)}}{\tau_k}\\
&\le&\ochi_P\big(\gpb^*\lambda,\gb;\Tilde w)+\frac 1k
\end{eqnarray*}
when $k$ is sufficiently large. Taking into account that $\skalp{\lambda,\nabla\gb(\xb_k-\xb-\tau_k\Tilde u)}=\skalp{\gxb^*\lambda,\xb_k-\xb-\tau_k\Tilde u}=0$, this gives us the estimate
\begin{eqnarray*}
\disp\frac 12\skalp{\nabla^2\skalp{\lambda,g}(\pb,\xb)\Tilde z,\Tilde z}+\ochi_P\big(\gpb^\ast\lambda,\gb;\Tilde w\big)\ge\uchi_C\big(\lambda,\gb;\nabla\gb\Tilde z\big)-
\disp\frac 3k.
\end{eqnarray*}
It shows by passing to the limit that the triple $(\Tilde w,\Tilde u,\lambda)$ satisfies \eqref{EqSOCond3}. This contradicts the assumptions of the theorem and thus completes the proof.\end{proof}

Let us now present a consequence of Theorem~\ref{ThPartSOReg} for an important special case of PCS \eqref{EqConstrSystem}, where $C$ is the union of finitely many convex polyhedra, and where the parameter space
\begin{eqnarray}\label{EqPInequ}
P:=\big\{p\in\R^m\big|\;h_i(p)\le 0\;\mbox{ for }\;i=1,\ldots,l_P\}
\end{eqnarray}
is described by smooth functions $h=(h_1,\ldots,h_{l_P})\colon\R^m\to\R^{l_P}$. Suppose for simplicity that $h(\pb)=0$.

\begin{Corollary}[second-order conditions for Robinson stability of PCS defined by unions of convex polyhedra]\label{CorSecOrdReg} Consider PCS \eqref{EqConstrSystem}, where $P\subset\R^m$ is defined by \eqref{EqPInequ}, and where the mappings $g:\R^m\times\R^n\to\R^l$ and $h:\R^m\to\R^{l_P}$ are  twice differentiable at $(\pb,\xb)\in\gph\Gamma$ and $\pb\in P$, respectively. Assume that the set $C$ is the union of finitely many convex polyhedra and that the mapping $h(\cdot)-\R^{l_P}_-$ is metrically subregular at $(\pb,0)$. Suppose also that for every $w\in\R^m$ with $\nabla h(\pb)w\le 0$ there is $u\in\R^n$ with $\nabla\gb(w,u)\in T_C(\gb)$ and that for every triple $(w,u,\lambda)$ satisfying
\begin{eqnarray}
\label{EqSOCond1a}&&(0,0)\not=(w,u),\;\nabla h(\pb)w\le 0,\;\nabla\gb(w,u)\in T_C\big(\gb\big),\\
\label{EqSOCond2a}&&0\not=\lambda\in N_C\big(\gb;\nabla\gb(w,u)\big),\;\nabla_x g(\pb,\xb)^*\lambda=0
\end{eqnarray}
there exists $\mu\in\R^{l_P}_+$ such that $\gpb^*\lambda=\nabla h(\pb)^*\mu$ and
\begin{eqnarray}\label{EqSOCond3a}
\skalp{\nabla^2\skalp{\lambda,g}(\pb,\xb)(w,u),(w,u)}-\skalp{\nabla^2\skalp{\mu,h}(\pb)w,w}<0.
\end{eqnarray}
Then the Robinson stability property holds for system \eqref{EqConstrSystem} at the point $(\pb,\xb)$ .
\end{Corollary}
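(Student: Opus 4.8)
The plan is to deduce the corollary from Theorem~\ref{ThPartSOReg} applied with the trivial splitting $l_2=0$, so that $g=g_1$, $C=C_1$, $\lambda=\lambda^1$, and the Robinson stability requirement imposed on the subsystem $g_2(p,x)\in C_2$ becomes vacuous (there $\Gamma_2(p)\equiv\R^n$). Under this reduction the conclusion of Theorem~\ref{ThPartSOReg} is precisely Robinson stability for \eqref{EqConstrSystem} at $(\pb,\xb)$, so it suffices to verify the tangent-direction hypothesis \eqref{EqTangDir} and the implication built on \eqref{EqSOCond1}--\eqref{EqSOCond3}.

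First I would record the description $T_P(\pb)=\{w\in\R^m\mid\nabla h(\pb)w\le0\}$. The inclusion ``$\subseteq$'' is automatic for $C^1$ data; for ``$\supseteq$'' I would invoke the assumed metric subregularity of $h(\cdot)-\R^{l_P}_-$ at $(\pb,0)$ together with $h(\pb)=0$: if $\nabla h(\pb)w\le0$ then ${\rm dist}(h(\pb+\tau w);\R^{l_P}_-)=\oo(\tau)$, which metric subregularity upgrades to ${\rm dist}(\pb+\tau w;P)=\oo(\tau)$, giving $w\in T_P(\pb)$. With this identity, the hypothesis of the corollary producing, for each $w$ with $\nabla h(\pb)w\le0$, some $u$ with $\nabla\gb(w,u)\in T_C(\gb)$, at once yields \eqref{EqTangDir}: since $C$ is a union of finitely many convex polyhedra, Remark~\ref{RemExistenceTangDir}(i) gives $\gb+\tau\nabla\gb(w,u)\in C$ for all small $\tau\ge0$, so that distance is eventually zero.

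The core is to show that any triple $(w,u,\lambda)$ obeying \eqref{EqSOCond1}--\eqref{EqSOCond3} (with $\lambda=\lambda^1$) forces $\lambda=0$. Arguing by contradiction, assume $\lambda\ne0$. By the previous step \eqref{EqSOCond1} reduces to \eqref{EqSOCond1a} and \eqref{EqSOCond2} with $\lambda\ne0$ gives \eqref{EqSOCond2a}, so the standing assumption of the corollary supplies $\mu\in\R^{l_P}_+$ with $\gpb^*\lambda=\nabla h(\pb)^*\mu$ obeying the strict inequality \eqref{EqSOCond3a}. Next I evaluate the two curvatures in \eqref{EqSOCond3}. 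Applying Theorem~\ref{PropSetSecOrd}(ii) to $C$ through its trivial representation $C=\{z\mid q(z)\in Q\}$ with $q={\rm id}_{\R^l}$ and $Q=C$ (the map $q(\cdot)-Q$ being metrically subregular at $(\gb,0)$ with modulus $1$), and recalling $\lambda\in N_C(\gb;\nabla\gb(w,u))$ with $\nabla\gb(w,u)\in T_C(\gb)$, delivers both $\langle\lambda,\nabla\gb(w,u)\rangle=0$ and $\uchi_C(\lambda,\gb;\nabla\gb(w,u))=0$, the latter since $\nabla^2 q\equiv0$. From $\nabla\gb(w,u)=\gpb w+\gxb u$ and $\gxb^*\lambda=0$ the first relation becomes $\langle\gpb^*\lambda,w\rangle=0$, while $\gpb^*\lambda=\nabla h(\pb)^*\mu$ with $\mu\in\R^{l_P}_+=N_{\R^{l_P}_-}(h(\pb))$ places $\gpb^*\lambda$ in $N_P(\pb)$. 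Then Theorem~\ref{PropSetSecOrd}(i), applied to $P=\{p\mid h(p)\in\R^{l_P}_-\}$ with multiplier $\gpb^*\lambda$ and direction $w$, gives $\ochi_P(\gpb^*\lambda,\pb;w)\le-\frac12\langle\nabla^2\langle\mu,h\rangle(\pb)w,w\rangle$ (bounding the supremum below by the admissible choice $\mu$). Substituting both evaluations, the left-hand side of \eqref{EqSOCond3} is at most $\frac12\big(\langle\nabla^2\langle\lambda,g\rangle(\pb,\xb)(w,u),(w,u)\rangle-\langle\nabla^2\langle\mu,h\rangle(\pb)w,w\rangle\big)$, which is negative by \eqref{EqSOCond3a}, whereas the right-hand side equals $0$ --- a contradiction. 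Hence $\lambda=0$, all hypotheses of Theorem~\ref{ThPartSOReg} are met, and Robinson stability follows.

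I expect the main difficulty to lie in the treatment of the curvatures: recognizing that a finite union of convex polyhedra has vanishing lower curvature via the identity representation in Theorem~\ref{PropSetSecOrd}(ii), and that the very same application produces the orthogonality $\langle\lambda,\nabla\gb(w,u)\rangle=0$ which is needed to legitimately invoke Theorem~\ref{PropSetSecOrd}(i) for $P$. The remaining steps --- confirming $\gpb^*\lambda\in N_P(\pb)$, checking that $\mu$ is admissible in the supremum bounding $\ochi_P$, and matching \eqref{EqSOCond3a} to \eqref{EqSOCond3} --- are routine bookkeeping.
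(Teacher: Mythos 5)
Your proposal is correct and follows essentially the same route as the paper: reduce to Theorem~\ref{ThPartSOReg} with $l_2=0$, verify \eqref{EqTangDir} via Remark~\ref{RemExistenceTangDir}(i), compute $\uchi_C=0$ from Theorem~\ref{PropSetSecOrd}(ii) with the identity representation and bound $\ochi_P$ via Theorem~\ref{PropSetSecOrd}(i), then contradict \eqref{EqSOCond3}. The only cosmetic differences are that the paper cites \cite[Lemma~2.1]{Gfr14b} for the orthogonality $\skalp{\lambda,\nabla\gb(w,u)}=0$ rather than extracting it from the proof of Theorem~\ref{PropSetSecOrd}(ii), and that it uses only the (automatic) inclusion $T_P(\pb)\subset\{w\mid\nabla h(\pb)w\le 0\}$, so your metric-subregularity argument for the reverse inclusion is unnecessary.
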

\begin{proof} To verify this result, we apply Theorem~\ref{ThPartSOReg} with $l_2=0$ and $l_1=l$. Since $T_P(\pb)\subset\{w|\;\nabla h(\pb)w\le 0\}$, the imposed assumptions imply that for every $w\in T_P(\pb)$ there is $u\in\R^n$ with $\nabla\gb(w,u)\in T_C(\gb)$ and that condition \eqref{EqTangDir} holds because $C$ is the union of finitely many convex polyhedra; see Remark~\ref{RemExistenceTangDir}(i). In order to apply Theorem~\ref{ThPartSOReg}, it now suffices to show that there is no triple $(w,u,\lambda)$ fulfilling \eqref{EqSOCond1}--\eqref{EqSOCond3} with $\lambda\not=0$. To proceed, consider any triple $(w,u,\lambda)$ satisfying conditions \eqref{EqSOCond1} and \eqref{EqSOCond3} with $\lambda\not=0$. Then $(w,u,\lambda)$ also satisfies \eqref{EqSOCond1a} and \eqref{EqSOCond2a}, and thus there is $\mu\in\R^{l_P}_+$ fulfilling $\gpb^*\lambda=\nabla h(\pb)^*\mu$ and \eqref{EqSOCond3a}. From \cite[Lemma~2.1]{Gfr14b} we deduce that $0=\skalp{\lambda,\nabla\gb(q,u)}$, which implies together with $\gxb^*\lambda=0$ that
\begin{eqnarray*}
0=\skalp{\lambda,\gpb w}=\skalp{\gpb^*\lambda,w}=\skalp{\nabla h(\pb)^*\mu,w}.
\end{eqnarray*}
Furthermore, it follows from Theorem~\ref{PropSetSecOrd}(i) that
\begin{eqnarray*}
\ochi_P\big(\gpb^*\lambda,\pb;w\big)\le-\disp\frac 12\skalp{\nabla^2\skalp{\mu,h}(\pb)w,w}.
\end{eqnarray*}
Applying now Theorem~\ref{PropSetSecOrd}(ii) with $\Omega=Q=C$ and $q(z)=z$ yields $\uchi_C(\lambda,\gb;\nabla\gb(w,u))=0$, and then from \eqref{EqSOCond3a} we obtain the relationships
\begin{eqnarray*}
\lefteqn{\disp\frac 12\skalp{\nabla^2\skalp{\lambda,g}(\pb,\xb)(w,u),(w,u)}+\ochi_P\big(\gpb^*\lambda,\pb;w\big)}\\
&\le&\disp\frac 12\skalp{\nabla^2\skalp{\lambda,g}(\pb,\xb)(w,u),(w,u)}-\frac 12\skalp{\nabla^2\skalp{\mu,h}(\pb)w,w}<0=\uchi_C\big(\lambda,\gb;\nabla\gb(w,u)\big),
\end{eqnarray*}
which show that conditions \eqref{EqSOCond1}--\eqref{EqSOCond3} and $\lambda\not=0$ cannot hold simultaneously. \end{proof}

The following instructive example illustrates the efficiency of the obtained first-order and second-order verification conditions for Robinson stability of a general class of PCS with the splitting structure \eqref{split}. In this example we employ the second-order conditions from Corollary~\ref{CorSecOrdReg} to verify Robinson stability of the system $g_2(p,x)\in C_2$ in \eqref{split} and then deduce Robinson stability of the whole system $g(p,x)\in C$ in \eqref{split} from the first-order Theorem~\ref{ThPartFOReg}.

\begin{Example}[implementation of the verification procedure for Robinson stability]\label{Ex1} {\rm Define the functions $f_i:\R^3\times\R^2\to\R$ for $i=1,2,3$ by 
\begin{eqnarray*}
f_1(p,x)&:=&-x_2-(1/2)x_1^2-p_1+p_2x_2,\\
f_2(p,x)&:=&x_2-(1/2)x_1^2-p_2+p_1x_1,\\
f_3(p,x)&:=&x_1+\vert x_2\vert^{\frac 32}-p_3
\end{eqnarray*}
and consider the system of parameterized nonlinear inequalities $f_i(p,x)\le 0$ with the parameter space
$$
P:=\big\{p\in\R^2\big|\;h(p):=-p_1-p_2+(3/2)p_1^2\le 0\big\}\times\R
$$
and the reference pair $(\pb,\xb)=(0,0)$. This system can be written as a PCS \eqref{EqConstrSystem} with $g=(f_1,f_2,f_3)$ and $C=\R^3_-$. It is convenient to represent \eqref{EqConstrSystem} in the splitting form \eqref{split} with $g_1:=f_3$, $g_2:=(f_1,f_2)$, $C_1=\R_-$, and $C_2=\R^2_-$ for which the results of Theorem~\ref{ThPartFOReg} and Corollary~\ref{CorSecOrdReg} can be applied.

To proceed, consider the mapping $\Tilde g\colon\R^2\times\R^2\to\R^2$ defined by $\Tilde g((p_1,p_2),x):=g_2((p_1,p_2,0),x)$ and the system $\Tilde g(p,x)\in C$ with the parameter space $\Tilde P=h^{-1}(\R_-)$ and the reference pair $(\Tilde p,\xb)=(0,0)$. The mapping $h(\cdot)-\R_-$ is metrically subregular at $(\Tilde p,0)$ since it is actually metrically regular around this point due to the validity of MFCQ therein. It is easy to see furthermore that for every $w\in\R^2$ satisfying $\nabla h(\Tilde p)w=-w_1-w_2\le 0$ the system
$$
\nabla\Tilde g(\tilde p,\xb)(w,u)=\left(\begin{array}
{c}-u_2-w_1\\u_2-w_2
\end{array}\right)\in T_{\R^2_-}\big(\Tilde g(\Tilde p,\xb)\big)=\R^2_-
$$
has the unique solution $u=(0,w_2)$. By Remark~\ref{RemExistenceTangDir}(ii) the conditions in \eqref{EqSOCond1a} and \eqref{EqSOCond2a} amount to
\begin{eqnarray*}
(w_1,w_2,u_1,u_2)\not=(0,0,0,0),\;-w_1-w_2\le 0,\;-u_2-w_1\le 0,\;u_2-w_2\le 0,\\
(0,0)\not=(\lambda_1,\lambda_2)\in\R^2_+,\;\lambda_1(-u_2-w_1)=\lambda_2(u_2-w_2)=0,\;\lambda_1(0,-1)+\lambda_2(0,1)=0
\end{eqnarray*}
yielding  in turn $\lambda_1=\lambda_2>0$ and $u_2=w_2=-w_1$. Hence for every triple $(w,u,\lambda)$ satisfying \eqref{EqSOCond1a} and \eqref{EqSOCond2a} we have $\nabla_p\tilde g(\Tilde p,\xb)^*\lambda=\nabla h(\Tilde p)^*\mu$ together with the equalities $\mu=\lambda_1$ and
\begin{eqnarray*}
\lefteqn{\skalp{\nabla^2\skalp{\lambda,\Tilde g}(\Tilde p,\xb)(w,u),(w,u)}-\skalp{\nabla^2\skalp{\mu,h}(\Tilde p)w,w}}\\
&=&\lambda_1(-u_1^2 +2w_2u_2)+\lambda_2(-u_1^2+2w_1u_1)-3\mu w_1^2
=\lambda_1(-2u_1^2-2w_1u_1-w_1^2)<0.
\end{eqnarray*}
Thus Corollary \ref{CorSecOrdReg} tells us that Robinson stability holds for the system $\Tilde g(p,x)\in\R^2_-$ at $(\Tilde p,\xb)$. Since $g_2((p_1,p_2,p_3),x)=\Tilde g((p_1,p_2),x)$ for all $(p_1,p_2,p_3)\in\R^3$ and $x\in\R^2$, it follows that Robinson stability holds also for the system $g_2(p,x)\in\R^2_-$ at the initial pair $(\pb,\xb)$.

Now we apply Theorem~\ref{ThPartFOReg} with $\zeta(\cdot)=\norm{\cdot}$ to system \eqref{split} splitting above. Since for every $p\in P$ we have $g(p,\xb)-\gb=-(p_1,p_2,p_3)$ and $p_1+p_2\ge\frac 32 p_1^2$, it follows that
\begin{eqnarray*}
\ImDp=\big\{v\in\R^3\big|\;v_1+v_2\ge 0\big\}.
\end{eqnarray*}
Then for every $v\in\ImDp$ the element $u=(v_3,v_2)$ satisfies the conditions
\begin{eqnarray*}
v+\gxb u=(-v_1-v_2,0,0)\in T_C\big(\gb\big),
\end{eqnarray*}
and thus \eqref{EqExistenceTangDir} holds because $C$ is polyhedral; see Remark~\ref{RemExistenceTangDir}(i). By observing that $\gxb^*\lambda=(\lambda_3,\lambda_1+\lambda_2)=0$ yields $\lambda^1=\lambda_3=0$, we deduce from Theorem~\ref{ThPartFOReg} that the Robinson stability property is fulfilled for system \eqref{split} at $(\pb,\xb)$. Note that MFCQ fails to hold for the system $g(\pb,\cdot)\le 0$ at $\xb$.}
\end{Example}

The obtained results on Robinson stability in Theorem~\ref{ThPartSOReg} allow us to derive new {\em second-order} conditions for {\em metric subregularity} of constraint systems. Earlier results of this type have been known only in some particular settings: in the case where the constraint set $C$ is the union of finitely many convex polyhedra \cite{Gfr14b} and for subdifferential systems that can be written in a constraint form \cite{DMN13}.

\begin{Corollary}[second-order conditions for metric subregularity of constraint systems]\label{subreg-second} Consider the constraint system $g(x)\in C$, where $C$ is an arbitrary closed set, and where $g:\R^n\to\R^l$ is twice differentiable at $\xb\in g^{-1}(C)$. Assume that for every pair $(u,\lambda)$ satisfying
\begin{eqnarray*}
\label{EqSOCond1b}&&0\not=u,\;\nabla g(\xb)u\in T_C\big(\gb\big),\\
\label{EqSOCond2b}&&\lambda\in N_C\big(g(\xb);\nabla g(\xb)u\big),\;\nabla g(\xb)u^*\lambda=0,\\
\label{EqSOCond3b}&&\frac 12\skalp{\nabla^2\skalp{\lambda,g}(\xb)u,u}\geq \uchi_C\big(\lambda,g(\xb);\nabla g(\xb)u\big)
\end{eqnarray*}
we have $\lambda=0$. Then the mapping $g(\cdot)-C$ is metrically subregular at $(\xb,0)$.
\end{Corollary}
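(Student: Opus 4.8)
The plan is to obtain Corollary~\ref{subreg-second} as a direct specialization of Theorem~\ref{ThPartSOReg}. First I would observe that the constraint system $g(x)\in C$ can be viewed as a trivial parametric system: set $g(p,x):=g(x)$ (with no dependence on $p$), let $P:=\{\pb\}$ be a one-point parameter space, and use the splitting \eqref{split} with $l_2=0$ and $l_1=l$, so that $C_1=C$ and the $g_2$-subsystem is vacuous (hence Robinson stability for it holds trivially). With this choice, Robinson stability of the whole system at $(\pb,\xb)$ reduces exactly to metric subregularity of $g(\cdot)-C$ at $(\xb,0)$, since \eqref{EqUMSP} with $p$ forced to equal $\pb$ is precisely the subregularity estimate \eqref{mscq} at $(\xb,0)$.

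Next I would check that the hypotheses of Theorem~\ref{ThPartSOReg} collapse to those of the corollary under this specialization. Since $P=\{\pb\}$, we have $T_P(\pb)=\{0\}$, so the only admissible direction is $w=0$; consequently $\nabla_p g\equiv 0$, the curvature term $\ochi_P(\cdot,\pb;w)$ with $w=0$ vanishes (the supremum in \eqref{EqCurvP} is over $v'$ with $0<\tau<\epsilon$, $\|v'-v\|<\epsilon$, $\pb+\tau v'\in P=\{\pb\}$, which is empty, giving $-\infty\le 0$), and the mixed second-order term $\skalp{\nabla^2\skalp{\lambda,g}(\pb,\xb)(w,u),(w,u)}$ reduces to $\skalp{\nabla^2\skalp{\lambda,g}(\xb)u,u}$. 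The tangential existence condition \eqref{EqTangDir} with $w=0$ becomes: for every $\tau_k\dn 0$ there is $u$ with $\liminf_k{\rm dist}(\gb+\tau_k\nabla g(\xb)u;C)/\tau_k=0$; taking $u=0$ this holds automatically because $\gb\in C$, so no separate hypothesis is needed. Thus conditions \eqref{EqSOCond1}--\eqref{EqSOCond3} with $w=0$ become exactly the three displayed conditions of Corollary~\ref{subreg-second} (noting $(0,0)\ne(w,u)$ with $w=0$ forces $u\ne 0$), and the conclusion $\lambda^1=0$ becomes $\lambda=0$ since $l_2=0$.

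Finally, since all the standing assumptions of Theorem~\ref{ThPartSOReg} are met (twice differentiability of $g$ at $\xb$ is given, $P\subset\R^m$ trivially with $m=0$ or $m$ arbitrary and $P$ a singleton, and the smoothness requirements on $g(p,\cdot)$ are inherited from $g$), Theorem~\ref{ThPartSOReg} applies and yields Robinson stability of $g(x)\in C$ at $(\pb,\xb)$, which is the desired metric subregularity of $g(\cdot)-C$ at $(\xb,0)$. The only point requiring a moment of care — and the place I would be most careful in writing the full argument — is verifying that a one-point parameter space is genuinely covered by the standing assumptions and by the machinery of Theorem~\ref{ThPartSOReg} (in particular that the proof of that theorem does not implicitly use a nondegenerate $P$); alternatively, one may run the specialization directly through the proof of Theorem~\ref{ThPartSOReg}, deleting all $p$-dependent terms, which makes the reduction transparent and avoids any such concern.
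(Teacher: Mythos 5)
Your proposal is correct and takes essentially the same route as the paper, whose proof is the one-line remark that the corollary ``follows directly from Theorem~\ref{ThPartSOReg} with $g(p,x)=g(x)$''; you have merely spelled out the routine verification that the hypotheses collapse. One tiny quibble: with $P=\{\pb\}$ and $w=0$ the feasible set in the definition of $\ochi_P$ is \emph{not} empty (take $v'=0$), so $\ochi_P(\cdot,\pb;0)=0$ rather than $-\infty$, which is in fact exactly what you need for \eqref{EqSOCond3} to reduce to the third displayed condition of the corollary.
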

\begin{proof} Follows directly from Theorem~\ref{ThPartSOReg} with $g(p,x)=g(x)$.\end{proof}

\section{Applications to Parametric Variational Systems}

In this section we first show that the obtained results on Robinson stability of PCS \eqref{EqConstrSystem} allow us to establish new verifiable conditions for {\em robust Lipschitzian stability} of their solution maps \eqref{Gamma}. By the latter we understand, in the case where $P$ is a metric space equipped with the metric $\rho(\cdot,\cdot)$, the validity of the {\em Lipschitz-like} (Aubin, pseudo-Lipschitz) property of $\Gamma\colon P\tto\R^n$ around $(\ox,\op)\in\gph\Gamma$, i.e., the existence of a constant $\ell\ge 0$ and neighborhoods $V$ of $\op$ and $U$ of $\ox$ such that
\begin{eqnarray}\label{lip-l}
\Gamma(p)\cap U\subset\Gamma(p')+\ell\,\rho(p,p')\B_{\R^n}\;\mbox{ for all }\;p,p'\in V.
\end{eqnarray}

Various conditions ensuring the Lipschitz-like property of solution maps as in \eqref{Gamma} have been obtained in  many publications; see, e.g., \cite{Mor06,RoWe98} and the references therein. The result most close to the following theorem is given in \cite[Theorem~4.3]{ChiYaoYen10}, which shows that Robinson stability (called ``Robinson metric regularity" in \cite{ChiYaoYen10}) of \eqref{EqConstrSystem} at $(\op,\ox)$ yields the Lipschitz-like property of \eqref{Gamma} around this point when $P$ is a normed space and some additional assumption on $\Gamma$ is imposed.

\begin{Theorem}[Lipschitz-like property of solution maps to PCS]\label{ThLip} In addition to the standing assumptions for \eqref{EqConstrSystem}, suppose that $P$ is a metric space and that $g$ is locally Lipschitzian near $(\pb,\xb)$. Then Robinson stability of PCS \eqref{EqConstrSystem} implies that the solution map \eqref{Gamma} is Lipschitz-like around this point.
\end{Theorem}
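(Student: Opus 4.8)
The plan is to read off the Lipschitz-like property \eqref{lip-l} directly from the Robinson stability estimate \eqref{EqUMSP}, transferring feasibility from one parameter value to a nearby one by means of the partial Lipschitz continuity of $g$ in $p$. First I would invoke the assumed Robinson stability to fix a modulus $\kk\ge 0$ together with neighborhoods $V$ of $\pb$ and $U$ of $\xb$ on which \eqref{EqUMSP} holds; shrinking $U$ and $V$ if necessary, I would also arrange that $g$ is Lipschitz continuous on $V\times U$ with some constant $L\ge 0$, so that in particular $\norm{g(p',x)-g(p,x)}\le L\,\rho(p,p')$ whenever $p,p'\in V$ and $x\in U$.

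The core step is then the following. Fix $p,p'\in V$ and pick any $x\in\Gamma(p)\cap U$ (if this intersection is empty, the inclusion in \eqref{lip-l} is vacuous). Since $g(p,x)\in C$, applying \eqref{EqUMSP} at the pair $(p',x)\in V\times U$ yields
\begin{eqnarray*}
{\rm dist}\big(x;\Gamma(p')\big)\le\kk\,{\rm dist}\big(g(p',x);C\big)\le\kk\norm{g(p',x)-g(p,x)}\le\kk L\,\rho(p,p').
\end{eqnarray*}
Because $g(p',\cdot)$ is continuous, the set $\Gamma(p')=g(p',\cdot)^{-1}(C)$ is closed, and finiteness of the right-hand side forces $\Gamma(p')\ne\emp$; hence the distance is attained at some $x'\in\Gamma(p')$ with $\norm{x-x'}\le\kk L\,\rho(p,p')$, i.e., $x\in\Gamma(p')+\kk L\,\rho(p,p')\B_{\R^n}$. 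Since $x$ was an arbitrary point of $\Gamma(p)\cap U$, this gives $\Gamma(p)\cap U\subset\Gamma(p')+\ell\,\rho(p,p')\B_{\R^n}$ with $\ell:=\kk L$ for all $p,p'\in V$, which is precisely \eqref{lip-l} with $U$ and $V$ as the required neighborhoods.

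The argument is short, and the only point needing mild care is the attainment of the distance to $\Gamma(p')$: closedness of $\Gamma(p')$ is immediate from continuity of $g(p',\cdot)$, while its nonemptiness need not be hypothesized separately since it follows from the estimate itself (one could also invoke the parametric stability of $\xb$ guaranteed by Theorem~\ref{ThEquMSP_BMP}, or simply replace $\ell$ by $\ell+1$ and use near-optimal approximations in $\Gamma(p')$). I do not anticipate a genuine obstacle: the conceptual content is that Robinson stability already encodes the one-sided distance estimate underlying the Aubin property, and the metric-space structure of $P$ enters only through the partial Lipschitz bound on $g$ in the parameter.
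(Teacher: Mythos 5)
Your proof is correct and follows essentially the same route as the paper: fix the Robinson stability modulus $\kk$ and neighborhoods, use the partial Lipschitz bound $\norm{g(p',x)-g(p,x)}\le L\rho(p,p')$ together with $g(p,x)\in C$ to estimate ${\rm dist}(x;\Gamma(p'))\le\kk L\rho(p,p')$, and conclude \eqref{lip-l} with $\ell=\kk L$. The only cosmetic difference is how nonemptiness of $\Gamma(p')$ is secured (you derive it from the estimate itself, the paper invokes the parametric stability from Theorem~\ref{ThEquMSP_BMP}), which you correctly note is immaterial.
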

\begin{proof} By Definition~\ref{DefUMSP} there are neighborhoods $V$ of $\pb$ and $U$ of $\xb$ together with $\kappa\ge 0$ such that \eqref{EqUMSP} holds. We can also assume that $g$ is Lipschitz continuous on $V\times U$ with modulus $L\ge 0$ and then, by implication (i)$\Longrightarrow$(ii) of Theorem~\ref{ThEquMSP_BMP} with shrinking $V$ if needed, get that $\Gamma(p)\cap U\not=\emp$ for all $p\in V$. Next consider any elements $p,p'\in V$ and $x\in\Gamma(p)\cap U$. Since $g(p,x)\in C$, it follows that
$$
{\rm dist}\big(g(p',x);C\big)\le\norm{g(p',x)-g(p,x)}\le L\rho(p,p'),
$$
which yields together with \eqref{EqUMSP} the distance estimates
\begin{eqnarray*}
{\rm dist}\big(x;\Gamma(p')\big)\le\kappa{\rm dist}\big(g(p',x);C\big)\le L\kappa\rho(p,p'),
\end{eqnarray*}
or equivalently, $x\in\Gamma(p')+L\kappa\rho(p,p')\B_{\R^n}$. By $x\in\Gamma(p)\cap U$ this tells us that
\begin{eqnarray*}
\Gamma(p)\cap U\subset\Gamma(p')+L\kappa\rho(p,p')\B_{\R^n}\;\mbox{ for all }\;p,p'\in V,
\end{eqnarray*}
which verifies the Lipschitz-like property \eqref{lip-l} of $\Gamma$ around $(\pb,\xb)$ with modulus $\ell=L\rho$.\end{proof}

We apply Theorem~\ref{ThLip} and the efficient conditions for Robinson stability of PCS established above to studying the Lipschitz-like property of solution maps to the {\em parametric variational systems} (PVS):
\begin{eqnarray}\label{EqVarSyst}
0\in F(p,x)+\widehat N_{\Omega(p)}(x)\;\mbox{ for }\;x\in\R^n\;\mbox{ and }\;p\in P\subset\R^m,
\end{eqnarray}
where $F:\R^m\times\R^n\to\R^n$ is continuously differentiable, and where for each $p\in\R^m$ the parameter-dependent set $\Omega(p)$ is defined by the nonlinear inequalities
\begin{eqnarray*}
\Omega(p):=\big\{x\in\R^n\big|\;\varphi_i(p,x)\le 0,\;i=1,\ldots,l_I\big\}
\end{eqnarray*}
described by ${\cal C}^2$-smooth functions $\varphi_i:\R^m\times\R^n\to\R$. According to the terminology of variational analysis \cite{Mor06,RoWe98}, by {\em variational} systems we understand generalized equations of type \eqref{EqVarSyst} the multivalued parts of which are given by subdifferential/normal cone mappings. In case \eqref{EqVarSyst} significant difficulties arise from the parameter dependence of $\O(p)$. When the sets $\O(p)$ are {\em convex}, PVS \eqref{EqVarSyst} relate to {\em quasi-variational inequalities} (where $\O$ may also depend on $x$) the Lipschitz-like property of which has been studied in \cite{MorOut07} on the basis of coderivative analysis (via the Mordukhovich criterion) and coderivative calculus rules. Here we don't assume the convexity of $\O(p)$ and conduct our sensitivity analysis via Theorem~\ref{ThLip} and the obtained conditions for Robinson stability.

It is well known that mild qualification conditions at $x\in\Omega(p)$ as used below ensure that
\begin{eqnarray*}
\widehat N_{\Omega(p)}(x)=\nabla_x\varphi(p,x)^*N_{\R^{l_I}_-}\big(\varphi(p,x)\big),
\end{eqnarray*}
where $\ph=(\ph_1,\ldots,\ph_{l_I})$. This allows us to replace \eqref{EqVarSyst} by the following constraint system:
\begin{eqnarray}\label{EqEnhSyst1}
\left\{\begin{array}{ll}
F(p,x)+\nabla_x\varphi(p,x)^*y=0,\\
\big(\varphi(p,x),y\big)\in\Gr N_{\R^{l_I}_-},
\end{array}\right.
\end{eqnarray}
which can be written in the PCS form \eqref{EqConstrSystem} as
\begin{eqnarray}\label{EqSystKKT}
g(p,x,y):=\left(\begin{array}{ll}G(p,x,y)\\\big(\varphi(p,x),y\big)\end{array}\right)\in C:=\{0_{\R^n}\}\times\Gr  N_{\R^{l_I}_-},
\end{eqnarray}
where $G(p,x,y):=F(p,x)+\nabla_x\varphi(p,x)^*y$ is the {\em Lagrangian} associated with the parametric variational system under consideration. The next theorem justifies the Lipschitz-like property of
\begin{eqnarray}\label{EqSolMapKKT}
\Gamma\colon P\tto\R^n\times\R^{l_I}\;\mbox{ with }\;\Gamma(p):=\big\{(x,y)\in\R^n\times\R^{l_I}\big|\;(p,x,y)\;\mbox{ satisfies }\;\eqref{EqEnhSyst1}\big\}
\end{eqnarray}
near the given reference point $(\pb,\xb,\yb)\in\Gr\Gamma$.

\begin{Theorem}[Robinson stability and Lipschitz-like properties of parametric variational systems]\label{ThPartRegKKT} Suppose that for every $w\in T_P(\pb)$ there exists a pair $(u,v)\in\R^n\times\R^{l_I}$ satisfying the conditions
\begin{eqnarray}\label{EqLinKKT1}
\left\{\begin{array}{ll}
\nabla_p G(\pb,\xb,\yb)w+\nabla_x G(\pb,\xb,\yb)u+\nabla_x\varphi(\pb,\xb)^*v=0,\\
(\nabla\varphi(\pb,\xb)(w,u),v)\in T_{\Gr N_{\R^{l_I}_-}}\big((\varphi(\pb,\xb),\yb\big).
\end{array}\right.
\end{eqnarray}
Assume in addition that for every triple $(q,u,v)\not=(0,0,0)$ fulfilling \eqref{EqLinKKT1} and for every triple of multipliers $(z,\lambda,d)\in\R^n\times\R^{l_I}\times\R^{l_I}$ we have the implication
\begin{eqnarray}\label{EqCoDerivKKT}
\left.\begin{array}{ll}
\nabla_x G(\pb,\xb,\yb)^*z+\nabla_x\varphi(\pb,\xb)^*\lambda=0,\\
\nabla_x\varphi(\pb,\xb)z+d=0,\\
(\lambda,d)\in N_{\Gr N_{\R^{l_I}_-}}\Big(\big(\varphi(\pb,\xb),\yb\big);\big(\nabla\varphi(\pb,\xb)(w,u),v\big)\Big)\end{array}\right\}
\Longrightarrow(z,\lambda)=(0,0).
\end{eqnarray}
Then Robinson stability holds at $(\pb,\xb,\yb)$ for the system $g(p,x,y)\in C$ given by \eqref{EqSystKKT}  and the solution map $\Gamma$ given by \eqref{EqSolMapKKT}
is Lipschitz-like around this triple.
\end{Theorem}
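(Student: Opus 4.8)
System \eqref{EqSystKKT} is the PCS reformulation of the KKT form of the PVS via the Lagrangian $G(p,x,y)=F(p,x)+\nabla_x\varphi(p,x)^*y$, and $\Gamma$ in \eqref{EqSolMapKKT} is precisely the solution map \eqref{Gamma} of \eqref{EqSystKKT}. The plan is to derive Robinson stability of \eqref{EqSystKKT} at $(\pb,\xb,\yb)$ from Corollary~\ref{CorPartFOREg} (the no-splitting form of Theorem~\ref{ThPartFOReg}, applied with decision variable $(x,y)\in\R^n\times\R^{l_I}$), and then to read off the Lipschitz-like property of $\Gamma$ from Theorem~\ref{ThLip}. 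Two structural facts will be used throughout: since $F$ is ${\cal C}^1$ and $\varphi$ is ${\cal C}^2$, the map $g$ is jointly ${\cal C}^1$ near $(\pb,\xb,\yb)$, so the standing assumptions hold and $g$ is locally Lipschitzian there; and $C=\{0_{\R^n}\}\times\Gr N_{\R^{l_I}_-}$ is a union of finitely many convex polyhedra, since $\Gr N_{\R^{l_I}_-}=\bigcup_{J\subset\{1,\ldots,l_I\}}\{(c,d)\mid c_i=0,\ d_i\ge0\ (i\in J),\ c_i\le0,\ d_i=0\ (i\notin J)\}$.

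First I would record the relevant derivatives. In block form relative to the image space $\R^n\times\R^{l_I}\times\R^{l_I}$ of $g$ one has $\nabla_y G(\pb,\xb,\yb)=\nabla_x\varphi(\pb,\xb)^*$, $\nabla_y\varphi=0$, and a trivial last block (the coordinate map $y\mapsto y$), so that for all $(z,\lambda,d)\in\R^n\times\R^{l_I}\times\R^{l_I}$
$$\nabla_{(x,y)}g(\pb,\xb,\yb)^*(z,\lambda,d)=\big(\nabla_x G(\pb,\xb,\yb)^*z+\nabla_x\varphi(\pb,\xb)^*\lambda,\ \nabla_x\varphi(\pb,\xb)z+d\big),$$
while for a parameter-direction $w$, an $x$-direction $u$, and a $y$-direction $\eta$ (the component called $v$ in \eqref{EqLinKKT1}--\eqref{EqCoDerivKKT})
$$\nabla g(\pb,\xb,\yb)(w,u,\eta)=\big(\nabla_p G(\pb,\xb,\yb)w+\nabla_x G(\pb,\xb,\yb)u+\nabla_x\varphi(\pb,\xb)^*\eta,\ \nabla\varphi(\pb,\xb)(w,u),\ \eta\big).$$
Thus \eqref{EqLinKKT1} says exactly that for each $w\in T_P(\pb)$ there is $(u,\eta)$ with $\nabla g(\pb,\xb,\yb)(w,u,\eta)\in T_C\big(g(\pb,\xb,\yb)\big)$. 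Using the inclusion \eqref{Dp-inj} with $\zeta(p):=\norm{p-\pb}$ and the smoothness of $g$ in $p$, I would check that every nonzero element of ${\rm Im}_\zeta D_pg(\pb,\xb,\yb)$ is a positive multiple of $\nabla_p g(\pb,\xb,\yb)w$ for some $w\in T_P(\pb)$, so that it suffices to verify the hypotheses of Corollary~\ref{CorPartFOREg} along the directions $v=\nabla_p g(\pb,\xb,\yb)w$, $w\in T_P(\pb)$.

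Next I would verify those hypotheses. For \eqref{EqExistenceTangDir}, hypothesis \eqref{EqLinKKT1} supplies $(u,\eta)$ with $v+\nabla_{(x,y)}g(\pb,\xb,\yb)(u,\eta)=\nabla g(\pb,\xb,\yb)(w,u,\eta)\in T_C\big(g(\pb,\xb,\yb)\big)$, and since $C$ is a finite union of convex polyhedra, Remark~\ref{RemExistenceTangDir}(i) gives $g(\pb,\xb,\yb)+t\big(v+\nabla_{(x,y)}g(\pb,\xb,\yb)(u,\eta)\big)\in C$ for all small $t\ge0$, making the $\liminf$ in \eqref{EqExistenceTangDir} vanish. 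For \eqref{EqPartFORegCor}, a nonzero pair $\big(v,(u,\eta)\big)$ with $v+\nabla_{(x,y)}g(\pb,\xb,\yb)(u,\eta)\in T_C\big(g(\pb,\xb,\yb)\big)$ corresponds, via the Jacobian formula above, to a triple $(w,u,\eta)\ne(0,0,0)$ fulfilling \eqref{EqLinKKT1} (one may take $w=0$ when $v=0$); writing the candidate multiplier as $(z,\lambda,d)\in\R^n\times\R^{l_I}\times\R^{l_I}$ to match $C=\{0_{\R^n}\}\times\Gr N_{\R^{l_I}_-}$, the adjoint formula above turns $\nabla_{(x,y)}g(\pb,\xb,\yb)^*(z,\lambda,d)=0$ into the first two lines of the premise of \eqref{EqCoDerivKKT}, and the product rule for directional limiting normals (as already used in the proof of Theorem~\ref{ThPartFOReg}) together with $N_{\{0_{\R^n}\}}(0;0)=\R^n$ turns $(z,\lambda,d)\in N_C\big(g(\pb,\xb,\yb);\nabla g(\pb,\xb,\yb)(w,u,\eta)\big)$ into its third line. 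Hence \eqref{EqCoDerivKKT} forces $(z,\lambda)=(0,0)$, whence $d=-\nabla_x\varphi(\pb,\xb)z=0$ and the multiplier vanishes, which is exactly \eqref{EqPartFORegCor}. Corollary~\ref{CorPartFOREg} then yields Robinson stability of \eqref{EqSystKKT} at $(\pb,\xb,\yb)$; and since $P\subset\R^m$ is a metric space and $g$ is locally Lipschitzian near $(\pb,\xb,\yb)$, Theorem~\ref{ThLip} delivers the Lipschitz-like property of $\Gamma$ around this triple.

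The hard part will be the bookkeeping in the verification of \eqref{EqPartFORegCor}: getting the block adjoint $\nabla_{(x,y)}g(\pb,\xb,\yb)^*$ right relative to the splitting $C=\{0_{\R^n}\}\times\Gr N_{\R^{l_I}_-}$ and, above all, identifying the directional limiting normal cone to this product so that it matches the last line of \eqref{EqCoDerivKKT} precisely. A secondary delicate point is the reduction to directions $v=\nabla_p g(\pb,\xb,\yb)w$ with $w\in T_P(\pb)$ — i.e.\ ruling out ``exotic'' image-derivative directions lying outside the closed cone generated by $\nabla_p g(\pb,\xb,\yb)\,T_P(\pb)$ — which rests essentially on the joint smoothness of $F$ and $\varphi$ in the parameter.
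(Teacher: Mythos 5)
Your proposal is correct and follows exactly the route of the paper's own (two-line) proof, which simply combines Theorem~\ref{ThLip}, Corollary~\ref{CorPartFOREg}, and Remark~\ref{RemExistenceTangDir}(i); you additionally supply the block Jacobian/adjoint computations, the decomposition of $C=\{0_{\R^n}\}\times\Gr N_{\R^{l_I}_-}$ into finitely many convex polyhedra, and the product formula for the directional limiting normal cone that the paper leaves implicit. The one step you flag as delicate --- reducing the image-derivative directions required by Corollary~\ref{CorPartFOREg} to positive multiples of $\nabla_p g(\pb,\xb,\yb)w$ with $w\in T_P(\pb)$, which by the remark following \eqref{Dp-inj} is not automatic from smoothness alone when $\nabla_p g(\pb,\xb,\yb)$ annihilates tangent directions of $P$ --- is passed over in silence by the paper as well, so your account is no less complete than the published one.
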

\begin{proof} Note that the implication in \eqref{EqCoDerivKKT} ensures that $d=0$ as well, since it automatically follows from $z=0$. Then both statements of the theorem follow from the combination of the results presented in Theorem~\ref{ThLip}, Corollary~\ref{CorPartFOREg}, and Remark~\ref{RemExistenceTangDir}(i).\end{proof}

To proceed with applications of Theorem~\ref{ThPartRegKKT}, deduce from \cite[Lemma~1]{GfrKl15} that for all the pairs $(\varphi,y)\in\Gr N_{\R^{l_I}_-}$ and $(s,v)\in T_{\Gr N_{\R^{l_I}_-}}(\varphi,y)$ we have the componentwise conditions
\begin{eqnarray*}
\begin{array}{ll}
(s_i,v_i)\in T_{\Gr N_{\R_-}}(\varphi_i,y_i)\;\mbox{ for }\;i=1,\ldots,l_I,\quad{and}\\
N_{\Gr N_{\R^{l_I}_-}}\big((\varphi,y);(s,v)\big)=\disp\prod_{i=1}^{l_I}N_{\Gr N_{\R_-}}\big((\varphi_i,y_i);(s_i,v_i)\big).
\end{array}
\end{eqnarray*}
Straightforward calculations show that for every $(\varphi,y)\in\Gr N_{\R_-}$ the following hold:
\begin{eqnarray*}
T_{\Gr N_{\R_-}}(\varphi,y)&=&
\left\{(s,v)\in\R^2\Big|\;\begin{cases}s=0&\mbox{if $\varphi=0<y$},\\
v=0&\mbox{if $\varphi<0=y$},\\
s\le 0\le v,\;sv=0&\mbox{if $\varphi=0=y$}
\end{cases}\right\},\\
\widehat N_{\Gr N_{\R_-}}(\varphi,y)&=&
\left\{(\lambda,d)\in\R^2\Big|\;\begin{cases} d=0&\mbox{if $\varphi=0<y$},\\
\lambda=0&\mbox{if $\varphi<0=y$},\\
\lambda\ge 0\ge\ d&\mbox{if $\varphi=0=y$}
\end{cases}\right\},\\
N_{\Gr N_{\R_-}}(\varphi,y)&=&\begin{cases}
\widehat N_{\Gr N_{\R_-}}(\varphi,y)&\mbox{if $(\varphi,y)\not=(0,0)$},\\
\big\{(\lambda,d)\in\R^2\big|\;\lambda>0>d\mbox{ or }\lambda d=0\big\}&\mbox{if $(\varphi,y)=(0,0).$}
\end{cases}
\end{eqnarray*}
This ensures that for every $(s,v)\in T_{\Gr N_{\R_-}}(\varphi,y)$ we have the representation
\begin{eqnarray*}
N_{\Gr N_{\R_-}}\big((\varphi,y);(s,v)\big)=\begin{cases}N_{\Gr N_{\R_-}}(\varphi,y)&\mbox{if $(\varphi,y)\not=(0,0)$,}\\
N_{\Gr N_{\R_-}}(s,v)&\mbox{if $(\varphi,y)=(0,0)$.}
\end{cases}
\end{eqnarray*}

Finally in this section, we illustrate in detail the procedure of applications of Theorem~\ref{ThPartRegKKT} and the previous Robinson stability results on which this theorem is based to determine the validity of the Lipschitz-like property of parametric variational systems arising from the Karush-Kuhn-Tucker (KKT) optimality conditions for parameterized nonlinear programs.

\begin{Example}[robust Lipschitzian stability of KKT systems]\label{robust-kkt} {\rm Consider the following mathematical program depending on the parameter vector $p\in\R^2$:
\begin{eqnarray}\label{nlp}
\left\{\begin{array}{ll}
\mbox{minimize }\;f(p,x):=\disp\frac 12 x_1^2-\frac 12 x_2^2-\skalp{p,x}\;\mbox{ over }\;x\in\R^2\\
\mbox{subject to }\;\varphi_1(x):= \disp-\frac 12 x_1+x_2\le 0,\quad\varphi_2(x):=-\frac 12 x_1-x_2\le 0.
\end{array}\right.
\end{eqnarray}
It is easy to check that $\xb=0$ is a local minimizer of \eqref{nlp} for $\op=0$. The KKT conditions for \eqref{nlp} whenever $p\in\R^2$ can be written in the variational form \eqref{EqEnhSyst1} with $F(p,x):=\nabla_x f(p,x)^*$. Furthermore, we can represent these conditions as the parametric constraint system \eqref{EqSystKKT} with
$$
G(p,x,y):=\left(\begin{array}
{c}x_1-p_1-\frac 12 y_1-\frac 12 y_2\\
-x_2-p_2+y_1-y_2
\end{array}\right).
$$
It is not hard to observe that for $\pb=(0,0)$ the point $\xb=(0,0)$ together with the multiplier $\yb=(0,0)$ is the unique solution to the KKT system under consideration. The reader can check that the previously known conditions for the Lipschitz-like property of the solution map to the KKT system based on metric regularity are not able to clarify the validity of this property around the given trivial solution.

We are going to use for this purpose the new results established in Theorem~\ref{ThPartRegKKT}. To this end, let us show that for every subset $P\subset\R^2$ such that $(0,0)=\pb\in P$ and the contingent cone $T_P(\pb)$ doesn't contain the directions $(1,1/2)$ and $(1,-1/2)$, the constraint system $g(p,x,y)\in C$ defined by \eqref{EqSystKKT} enjoys the Robinson stability property at $(\pb,\xb,\yb)$ and thus the corresponding solution map $\Gamma\colon P\tto\R^2\times\R^2$ from \eqref{EqSolMapKKT} is Lipschitz-like around this point.

To proceed, we need to verify first the validity of all the assumptions of Theorem~\ref{ThPartRegKKT}. Note that conditions \eqref{EqLinKKT1} amount in our case to saying that
\begin{eqnarray*}
\left(\begin{array}
{c}u_1-w_1-\frac 12 v_1-\frac 12 v_2\\
-u_2-w_2+v_1-v_2
\end{array}\right)=\left(\begin{array}{c}0\\0\end{array}\right),\\
-\frac 12 u_1+u_2\le 0,\;v_1\ge 0,\;v_1\Big(-\frac 12 u_1+u_2\Big)=0,\\
-\frac 12 u_1-u_2\le 0,\;v_2\ge 0,\;v_2\Big(-\frac 12 u_1-u_2\Big)=0.
\end{eqnarray*}
Further, it follows that $(w,u,v)$ solves this system if and only if we have $(w,u,v)\in\Gr\Tilde\Gamma$, where the mapping $\Tilde\Gamma\colon\R^2\tto\R^2\times\R^2$ is defined by
\begin{equation}\label{EqExGamma}\Tilde\Gamma(w)=\begin{cases}
\big\{z_1(w)\big\}&\mbox{if $w\in Q_1:=\big\{w\big|-\frac 12 w_1+w_2> 0,\;2w_1+w_2\ge 0\big\}$,}\\
\big\{z_1(w),z_2(w),z_3(w)\big\}&\mbox{if $w\in Q_2:=\big\{w\big|\;-\frac 12 w_1+w_2\le 0,\;-\frac 12 w_1-w_2\le 0\big\}$,}\\
\big\{z_3(w)\big\}&\mbox{if $w\in Q_3:=\big\{w\big|\;-\frac 12 w_1-w_2>0,\;2w_1-w_2\ge 0\big\}$,}\\
\big\{z_4(w)\big\}&\mbox{if $w\in Q_4:=\big\{w\big|\;2w_1+w_2\le 0,\;2w_1-w_2\le 0\big\}$,}\\
\end{cases}
\end{equation}
and where the functions $z_i(w)$, $i=1,\ldots,4$, in \eqref{ThPartRegKKT} are specified as follows:
\begin{eqnarray*}
z_1(w)&:=&\Big(\Big(\frac 43 w_1+\frac 23 w_2,\frac 23 w_1+\frac 13 w_2\Big),\Big(\frac 23 w_1+\frac 43 w_2,0\Big)\Big),\;z_2(w):=\big((w_1,-w_2),(0,0)\big),\\
z_3(w)&:=&\Big(\Big(\frac 43 w_1-\frac 23 w_2,-\frac 23 w_1+\frac 13 w_2\Big),\Big(0,\frac 23 w_1-\frac 43 w_2\Big)\Big),\\
z_4(w)&:=&\Big((0,0),\Big(-w_1+\frac{w_2}2,-w_1-\frac{w_2}2\Big)\Big).
\end{eqnarray*}
Hence for every $w\in\R^2$ there exists a solution $(u,v)$ to this system. Now consider such a solution to \eqref{EqExGamma} that $(0,0,0)\not=(w,u,v)$ with $w\in T_P(\pb)$ and then take a triple $(z,\lambda,d)$ from the left-hand side of \eqref{EqCoDerivKKT}. Thus the implication in \eqref{EqCoDerivKKT} says that
\begin{eqnarray}\nonumber
&&z_1-\frac 12\lambda_1-\frac 12\lambda_2=0,\;-z_2+\lambda_1-\lambda_2=0,\;-\frac 12 z_1+z_2+d_1=0,\;-\frac 12 z_1-z_2+d_2=0,\\
\label{EqKKTAux5}
&&(\lambda_1,d_1)\in N_{\Gr N_{\R_-}}\Big((0,0);\Big(-\frac 12 u_1+u_2,v_1\Big)\Big)=N_{\Gr N_{\R_-}}\Big(-\frac 12 u_1+u_2,v_1\Big),\\
\label{EqKKTAux6}
&&(\lambda_2,d_2)\in N_{\Gr N_{\R_-}}\Big((0,0);\Big(-\frac 12 u_1-u_2, v_2\Big)\Big)=N_{\Gr N_{\R_-}}\Big(-\frac 12 u_1-u_2, v_2\Big).
\end{eqnarray}
By eliminating $z_1,z_2$ we deduce from the relationships above that
\begin{eqnarray}\label{EqKKTAux1a}
d_1=-\frac 34\lambda_1+\frac 54\lambda_2,\quad d_2=\frac 54 \lambda_1-\frac 34\lambda_2
\end{eqnarray}
and easily get from $(0,0,0)\not=(w,u,v)$ that $w\not=0$. Thus the following four cases should be analyzed:

{\bf (i)} $w\in Q_1\cup Q_2:=\{w|\;-\frac 12 w_1-w_2\le 0,\;2w_1+w_2\ge 0\}$ and $(u,v)=z_1(w)$. Since $(1,-\frac 12)\not\in T_P(\pb)$ in this case, it tells us that
$-\frac 12 w_1-w_2< 0$ and therefore
$$
-\frac 12 u_1+u_2=0,\;v_1=\frac 43\Big(\frac 12 w_1+w_2\Big)>0,\;-\frac 12 u_1-u_2=-u_1=-\frac 23\Big(2w_1+w_2\Big)\le 0,\;v_2=0.
$$
From \eqref{EqKKTAux5} we have that $d_1=0$, and so $\lambda_1=\frac 53\lambda_2$ and $d_2=\frac 43\lambda_2$. It follows that $d=\lambda=(0,0)$ is the only solution to system \eqref{EqKKTAux1a} if $\lambda_2d_2=0$. By \eqref{EqKKTAux6} the case $\lambda_2d_2\not=0$ could only be possible if $-\frac 12 u_1-u_2=-u_1=-\frac 23(2w_1+w_2)\leq 0$. This results in $\lambda_2>0>d_2$ contradicting $d_2=\frac 43 \lambda_2$. Hence $d=\lambda=0$ is the only pair satisfying \eqref{EqKKTAux5}, \eqref{EqKKTAux6}, and \eqref{EqKKTAux1a}; thus we arrive at $z=0$.

{\bf (ii)} $w\in Q_2\cup Q_3=\{w|\;-\frac 12 w_1+w_2\le 0,\;2w_1-w_2\ge 0\}$ and $(u,v)=z_3(q)$. Using in this case the same arguments as in (i) gives us $(z,\lambda,d)=(0,0,0)$.

{\bf (iii)} $w\in Q_2$ and $(u,v)=z_2(w)$. Since $(1,\pm\frac 12)\not\in T_P(\pb)$ in this case, we have $-\frac 12 w_1<w_2<\frac 12 w_1$, and thus it follows from \eqref{EqKKTAux5} and \eqref{EqKKTAux6} that $\lambda_1=\lambda_2=0$. Thus we also get $d=0$ and $z=0$.

{\bf (iv)} $w\in Q_4$ and $(u,v)=z_4(w)$. In this case we have $v_1=-w_1+\frac{w_2}2\ge 0$ and $v_2=-w_1-\frac{w_2}2\ge 0$, but these two values can't be zero simultaneously due to $w\not=0$. If both values $v_1,v_2$ are positive, then $d_1=d_2=0$ and consequently $\lambda=z=0$. If only one value, say $v_1$, is positive, then $d_1=0$, and so we get from from \eqref{EqKKTAux1a} that $\lambda_1=\frac 53\lambda_2$ and $d_2=\frac 43\lambda_2$. It follows from \eqref{EqKKTAux6} that either $\lambda_2d_2=0$ or $\lambda_2>0>d_2$ implying thus that $\lambda_2=d_2=0$ and consequently $\lambda_1=0$ and $z=0$.

As shown, in each of these cases we have $(z,\lambda,d)=0$, and hence \eqref{EqCoDerivKKT} holds. Therefore Robinson stability of the system $g(p,x,y)\in C$ under consideration at $(\pb,\xb,\yb)$ and the Lipschitz-like property of tits solution map $\Gamma$ around this point follow from Theorem~\ref{ThPartRegKKT}.

Observe further that the solution map $\Gamma$ to the KKT system for \eqref{nlp} is actually the restriction of $\Tilde\Gamma$ from \eqref{EqExGamma} on $P$. If $z_2(p)\in\Gamma(p)$, the $x$-part of it constitutes a stationary solution, while those of $z_1(p)$ and $z_3(p)$ are local minimizers for \eqref{nlp} provided that $z_1(p)\in\Gamma(p)$ and $z_3(p)\in\Gamma(p)$ but excepting the cases of $0\le p_1=2p_2$ and $0\le p_1=-2p_2$ when $z_2=z_1$ and $z_2=z_3$, respectively. The $x$-part of $z_4(p)$ is a local minimizer whenever $z_4(p)\in\Gamma(p)$.

Now consider two points $p',p''\in P$ near $\pb$ with $p'\in Q_1\cap P$ and $p''\in Q_2\cap P$. It follows that $\Gamma(p')=\{z_1(p')\}$ and $\Gamma(p'')=\{z_1(p''),z_2(p''),z_3(p'')\}$. Furthermore, we have $\norm{z_1(p')-z_1(p'')}\le L\norm{p'-p''}$, but for $z_2(p'')$ and $z_3(p'')$ only some bounds of the form
\begin{eqnarray*}
{\rm dist}\big(z_i(p'');\Gamma(p')\big)=\norm{z_i(p'')-z_1(p')}\le L(\norm{p'-\pb}+\norm{p''-\pb}),\;i=2,3,
\end{eqnarray*}
are available, where $L$ is sufficiently large. However, the condition $(1,\frac 12)\not\in T_P(\pb)$ ensures the existence of $\epsilon_1>0$ such that the lower estimate
\begin{eqnarray*}
\norm{p'-p''}\ge\epsilon_1(\norm{p'-\pb}+\norm{p''-\pb})
\end{eqnarray*}
holds for all $p'\in Q_1\cap P$ and $p''\in Q_2\cap P$ close to $\pb$, which results in the inclusion
\begin{eqnarray*}
\Gamma(p'')\subset\Gamma(p')+\frac L{\epsilon_1}\norm{p''-p'}\B_{\R^4}.
\end{eqnarray*}
Similar considerations apply to a pair $(p'',p''')\in(Q_2\cap P)\times(Q_3\cap P)$. Due to $(1,-\frac 12)\not\in T_P(\pb)$ we get
\begin{eqnarray*}
\norm{p'''-p''}\ge\epsilon_2(\norm{p'''-\pb}+\norm{p''-\pb})
\end{eqnarray*}
with $\epsilon_2>0$ for $p'',p'''$ near $\pb$, which implies in turn the inclusion
\begin{eqnarray*}
\Gamma(p'')\subset\Gamma(p''')+\frac L{\epsilon_2}\norm{p''-p'''}\B_{\R^4}.
\end{eqnarray*}
Summarizing our consideration shows that the solution map $\Gamma$ to the variational KKT system associated with \eqref{nlp} is Lipschitz-like around the reference point for every subset $P\subset\R^2$ described above.}
\end{Example}

\section{Concluding Remarks}

This paper studies a well-posedness property of general parametric constraint systems \eqref{EqConstrSystem}, which goes back to Robinson and is named here {\em Robinson stability}. We conduct a rather detailed analysis of this fundamental property with deriving verifiable first-order and second-order conditions for its validity by using advanced tools of variational analysis and generalized differentiation. As consequences of the main results, new conditions for the Lipschitz-like/Aubin property for solution maps to constraint and certain classes of variational systems are derived and illustrated by nontrivial examples.

As discussed in Section~1, the name ``Robinson metric regularity" used for the underlying property in some publications seems to be misleading, since this property doesn't correspond to the conventional understanding of metric regularity. On the other hand, we employ a useful interpretation of Robinson stability for \eqref{EqConstrSystem} as the {\em uniform} metric {\em subregularity} of the mapping $g(p,\cdot)-C$ over the given class of parameter perturbations. This approach leads us, in particular, to establishing new sufficient conditions of metric subregularity for nonpolyhedral constraint systems.

Robinson stability and related topics are planned to be a focus of our future research, in both theoretical and applied frameworks. Among them we mention a detailed investigation of Robinson stability for parametric variational systems, with the specific emphasis on variational inequalities and nonlinear complementarity. Another important topic of our particular concentrations is {\em full stability} of local minimizers for various constrained optimization problems including NLPs, conic programming, bilevel optimization, etc. In this direction, which has been in fact our original motivation for the current study, we plan to investigate both {\em Lipschitzian} (as in \cite{LPR00}) and {\em H\"olderian} (as in \cite{MN14,MNR15}) notions of full stability and to obtain results at the same level of perfection as in our recent study of tilt stability (a special case of full stability) for NLPs in \cite{GM15}. 

{\bf Acknowledgements.} The research of the first author was partially supported by the Austrian Science Fund (FWF) under grants P26132-N25 and P29190-N32. The research of the second author was partially supported by the USA National Science Foundation under grants DMS-12092508 and DMS-1512846, by the USA Air Force Office of Scientific Research under grant No.\,15RT0462, and  by the Ministry of Education and Science of the Russian Federation (the Agreement No. 02.a03.21.0008 of 24.06.2016). The authors gratefully acknowledge useful remarks by two anonymous referees as well as Aram Arutyunov, Alex Kruger, and Diethard Klatte that allowed us to improve the original presentation.

\end{document}